\newfont{\bb}{msbm10 at 12pt}
\newfont{\tbb}{msbm10 at 8pt}
\def\r{\hbox{\bb R}}
\numberwithin{equation} {section}
\begin{document}
\mbox{}\vspace{0.2cm}\mbox{}

\theoremstyle{plain}\newtheorem{lemma}{Lemma}[section]
\theoremstyle{plain}\newtheorem{proposition}{Proposition}[section]
\theoremstyle{plain}\newtheorem{theorem}{Theorem}[section]

\theoremstyle{plain}\newtheorem*{theorem-m}{Main Theorem}
\theoremstyle{plain}\newtheorem*{theorem*}{Theorem}
\theoremstyle{plain}\newtheorem*{main theorem}{Main Theorem} 
\theoremstyle{plain}\newtheorem*{lemma*}{Lemma}
\theoremstyle{plain}\newtheorem*{claim}{Claim}

\theoremstyle{plain}\newtheorem{example}{Example}[section]
\theoremstyle{plain}\newtheorem{remark}{Remark}[section]
\theoremstyle{plain}\newtheorem{corollary}{Corollary}[section]
\theoremstyle{plain}\newtheorem*{corollary-A}{Corollary}
\theoremstyle{plain}\newtheorem{definition}{Definition}[section]
\theoremstyle{plain}\newtheorem{acknowledge}{Acknowledgment}
\theoremstyle{plain}\newtheorem{conjecture}{Conjecture}

\begin{center}
\rule{15cm}{1.5pt} \vspace{.4cm}

{\bf\Large On Huber's type theorems in general dimensions} 
\vskip .3cm

Shiguang Ma$\, ^\dag$\footnote{The author is the corresponding author and
partially supported by NSFC 11571185} 
and \hskip 0.15cm Jie Qing$\,^\ddag$\footnote{The author is partially supported by NSF DMS-1608782}\\

\vspace{0.3cm} 
\rule{15cm}{1.5pt}
\end{center}

\vskip 0.3cm
\noindent$\mbox{}^\dag$ School of Mathematical Science and LPMC, Nankai University, Tianjin, China; \\e-mail: 
msgdyx8741@nankai.edu.cn 
\vspace{0.2cm}

\noindent $\mbox{}^\ddag$ Department of Mathematics, University of California, Santa Cruz, CA 95064; \\
e-mail: qing@ucsc.edu

\title{}
\begin{abstract} In this paper we present some extensions of the celebrated finite point conformal compactification theorem of 
Huber \cite{Hu57} for complete open surfaces to general dimensions based on the n-Laplace equations in conformal geometry. 
We are able to conclude a domain in the round sphere has to be the sphere deleted finitely many points if it can be endowed with 
a complete conformal metric with the negative part of the smallest Ricci curvature satisfying some integrable conditions. Our 
proof is based on the strengthened version of the Arsove-Huber's type theorem on n-superharmonic functions in our earlier 
work \cite{MQ18}.  Moreover, using p-parabolicity, we push the injectivity theorem of Schoen-Yau to allow some negative curvature 
and therefore establish the finite point conformal compactification theorem for manifolds that have a conformal immersion into the 
round sphere. As a side product we establish the injectivity of conformal immersions from n-parabolicity alone, which is interesting 
by itself in conformal geometry.  
\end{abstract}

\maketitle


\section{Introduction}\label{Sec:Intro}

In this paper we want to present extensions of the celebrated Huber Theorem on complete open surfaces. Huber \cite{Hu57} in 1957 showed that a complete
open surface whose negative part of the Gaussian curvature is integrable is a closed surface with finitely many points removed. This Huber theorem made the highlight of applications of subharmonic functions in differential geometry after the seminal work of Cohn-Vossen \cite{CV35}. In 2 dimensions the Gaussian curvature equation and the Gauss-Bonnet formula are the essentials.
\\

It has been highly desirable to extend this celebrated Huber Theorem to higher dimensions. Notably there is a Huber's type theorem of Chang, 
Qing and Yang \cite{CQY00} in 4 dimensions, based on the so-called $Q$-curvature equations and the involvement of 
$Q$ curvature in Chern-Gauss-Bonnet formula. Also notably, Carron and Herzlich \cite{CH02} (also see \cite{Car20}) obtained a Huber's type theorem 
in general dimensions for domains in a given compact Riemannian manifold that admit complete conformal metrics with growth conditions on the volume 
and integral conditions on Ricci or scalar curvature. 
 \\
 
 It was observed in \cite{BMQ17, MQ18} the following n-Laplace equation in conformal geometry
 \begin{equation}\label{Equ:n-laplace-intro}
 -\Delta_n u + Ric (\nabla u)|\nabla u|^{n-2} = (Ric (\nabla u)|\nabla u|^{n-2})_{g} e^{nu}.
 \end{equation}
 plays a similar role in general dimensions to the Gaussian curvature equation in 2 dimensions, where $Ric(v)$ is the Ricci curvature in the $v$ direction,  
 the term $(|\nabla u|^{n-2}Ric(\nabla u))_g$ is calculated for the conformal metric $g = e^{2u}\bar g$, 
 and other terms are calculated under the background metric $\bar g$. 
Particularly, in \cite{MQ18}, Arsove-Huber's type theorem
 \cite{AH73} for n-superharmonic functions have been established, where Wolff potential theory provides the important analytic tool in general 
 dimensions as the Newton potential did in 2 dimensions.  
 \\
 
In this paper, first, by capacity estimates, we obtain

\begin{theorem}\label{Thm:intro-1}
For $n\geq 3$, let $\Omega$ be a domain in a compact manifold $(M^n, \bar g)$ and endowed with a complete conformal metric 
$g = e^{2u}\bar g$. Assume that,  
\begin{equation*}
R^-_g \in L^p(\Omega, g) \text{ for some $p\in(\frac{n}{2},+\infty]$ and } \left\{
\aligned
 & \text{either } Ric^{-}_g \in L^{\frac{n}{2}}(\Omega, g),\\
 & \text{or } Ric^{-}_g |\nabla u|^{n-2}e^{2u} \in L^{1}(\Omega, \bar g).
\endaligned\right.
\end{equation*}
where $R^-_g$ is the negative part of the scalar curvature $R$. Then $cap_{n}(\partial\Omega,D)=0$, where $D$ is a neighborhood
of $\partial\Omega$ in $M^n$. Hence $(\Omega, g)$ is $n$-parabolic and $\partial\Omega$
is of Hausdorff dimension 0 in $(M^n, \bar g)$. 
\end{theorem}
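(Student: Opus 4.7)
The plan is to combine the $n$-Laplace equation (\ref{Equ:n-laplace-intro}) with a capacity estimate and the strengthened Arsove-Huber type theorem for $n$-superharmonic functions from \cite{MQ18}. Since $\bar g$ is smooth on the compact manifold $M^n$ while $g=e^{2u}\bar g$ is complete on $\Omega$, any $\bar g$-path escaping to $\partial\Omega$ has infinite $g$-length, so $u\to+\infty$ at $\partial\Omega$. The strategy is to verify that $u$ is, modulo a locally integrable source, $n$-superharmonic near $\partial\Omega$, and then to invoke \cite{MQ18} to force $\partial\Omega$ to have vanishing $n$-capacity.

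First I would rewrite (\ref{Equ:n-laplace-intro}) as
$$
-\Delta_n u \;=\; (Ric(\nabla u)|\nabla u|^{n-2})_g \, e^{nu} \;-\; Ric(\nabla u)|\nabla u|^{n-2}
$$
and aim to show that the negative part of the right-hand side is in $L^1_{loc}$ near $\partial\Omega$. The second term is controlled pointwise by $C|\nabla u|^{n-1}$, since $Ric_{\bar g}$ is bounded on compact $M^n$, and hence lies in $L^1_{loc}$ once $u\in W^{1,n}_{loc}$, which follows from local regularity for the $n$-Laplacian. For the first term, a conformal change of variables yields the pointwise bound
$$
(Ric(\nabla u)|\nabla u|^{n-2})_g^{-}\, e^{nu} \;\leq\; |Ric_g^-|\,|\nabla u|^{n-2}\,e^{2u},
$$
whose $\bar g$-integral equals $\int |Ric_g^-|\,|\nabla_g u|_g^{n-2}\,dV_g$. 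Under the second alternative hypothesis this is finite directly; under the first it follows from H\"older's inequality pairing $Ric_g^-\in L^{n/2}(\Omega,g)$ against $|\nabla_g u|_g^{n-2}\in L^{n/(n-2)}(\Omega,g)$, the latter being the conformally invariant $n$-Dirichlet energy of $u$, which I would bound using the scalar curvature hypothesis by tracing (\ref{Equ:n-laplace-intro}) and running a Yamabe-type iteration powered by $R_g^-\in L^p$ for some $p>n/2$.

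Having established that $u$ is $n$-superharmonic up to an $L^1_{loc}$ source near $\partial\Omega$, I would apply the Arsove-Huber theorem of \cite{MQ18}: its blow-up set $\partial\Omega$ then has vanishing $n$-capacity, $cap_n(\partial\Omega,D)=0$. The $n$-parabolicity of $(\Omega,g)$ follows at once from the conformal invariance of $n$-capacity in dimension $n$. Finally, the standard capacity-dimension relationship in $(M^n,\bar g)$, that sets of vanishing $n$-capacity carry no positive $\alpha$-Hausdorff measure for any $\alpha>0$, yields the claim that $\partial\Omega$ has Hausdorff dimension zero.

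The main obstacle will be supplying the integrability of $|\nabla u|_{\bar g}^n$, the conformally invariant $n$-energy of $u$, under the first alternative, which is where the $L^p$ scalar curvature bound with $p>n/2$ must be fully exploited. In the second alternative the weighted hypothesis already encodes the needed $L^1$ control directly, but one still has to handle possible cancellations between the two competing terms on the right-hand side of (\ref{Equ:n-laplace-intro}). Managing the interface between $\bar g$- and $g$-measurements, and localizing the Wolff-potential capacity estimate of \cite{MQ18} near $\partial\Omega$ where $e^{nu}$ blows up, constitutes the technical heart of the argument.
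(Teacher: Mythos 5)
Your proposal has two genuine gaps, and they sit exactly where the paper does its real work. First, you dismiss the blow-up of the conformal factor in one line: completeness of $g=e^{2u}\bar g$ does \emph{not} imply $u(x)\to+\infty$ as $x\to\partial\Omega$. Completeness only forces every path approaching $\partial\Omega$ to have infinite $g$-length; $u$ can stay bounded on a sequence of points accumulating at $\partial\Omega$ (small ``valleys'' that paths need not traverse) while the metric remains complete. This is precisely why the hypothesis $R^-_g\in L^p(\Omega,g)$, $p>\frac n2$, appears in the statement: the paper proves (Lemma \ref{Lem:CYH-improved}, an improvement of \cite[Proposition 8.1]{CHY04}) that $u\to+\infty$ at $\partial\Omega$ by Moser iteration on the scalar curvature equation for $e^{-\frac{n-2}2u}$, after establishing a Yamabe--Sobolev inequality on a neighborhood of $\partial\Omega$. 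In your plan the scalar curvature hypothesis is instead diverted to an attempted bound on the global $n$-energy, so the limit $u\to+\infty$ is simply unproven.

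Second, the capacity conclusion does not follow by ``invoking the Arsove--Huber theorem of \cite{MQ18}.'' That theorem (Theorem \ref{Thm:MQ18}) describes the logarithmic asymptotics of an $n$-superharmonic function near a point in the support of a \emph{nonnegative} Radon measure; it contains no statement that a blow-up set has vanishing $n$-capacity, and it is used in the paper only later, for the finite-point theorem on $\mathbb{S}^n$. The paper's actual proof of $cap_n(\partial\Omega,D)=0$ is a direct test-function estimate: one multiplies the $n$-Laplace equation \eqref{Equ:n-laplace} by $\phi=u_{\alpha,\beta}-\beta\eta$, where $u_{\alpha,\beta}$ is the truncation of $u-\alpha$ at level $\beta$, and controls the curvature term either by the weighted $L^1(\bar g)$ hypothesis directly or, under $Ric^-_g\in L^{n/2}(\Omega,g)$, by H\"older applied to $\int_{\Sigma_\alpha}|\nabla u_{\alpha,\beta}|^n$ followed by absorption into the left-hand side; this yields $\int|\nabla(u_{\alpha,\beta}/\beta)|^n\leq C\beta^{1-n}+C\beta^{-n/2}\to0$. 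Your alternative route under the first hypothesis requires the global finiteness of the conformally invariant energy $\int_\Omega|\nabla u|^n\,dvol_{\bar g}$ (to pair with $Ric^-_g\in L^{n/2}$ by H\"older), which is neither assumed nor derivable from $R^-_g\in L^p$ by the vague ``Yamabe-type iteration'' you sketch --- indeed the whole point of the truncation/absorption trick is to avoid ever needing that global bound. As written, the proposal establishes neither the boundary blow-up of $u$ nor the capacity estimate, so the core of the theorem is missing.
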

 
This theorem may be compared with \cite[Corollary 2.2]{CH02}. Notice that \cite[Theorem 1]{Gal88} (cf. Lemma \ref{Lem:gallot}
in Section \ref{Sec:preli}) is not applicable under the assumptions of Theorem \ref{Thm:intro-1}. 
An example, similar to \cite[Theorem 2.7]{Car20}, is discussed at the end of Section \ref{Subsec:domains} to illustrate it is not 
possible to conclude $\partial\Omega$ to be finite under the assumption $Ric^-\in L^\frac n2(\Omega, g)$.
\\

To achieve finite point conformal compactification, we strengthen \cite[Theorem 1.1]{MQ18} in two aspects. The first is to drop the assumption 
of the origin being isolated in \cite[Theorem 1.1]{MQ18} for $m\geq 1$ from the completeness (cf. Theorem \ref{Thm:improvement of theorem 1.1}). 
The second is
to confirm that $m$ is related to the point mass at the origin (cf. Lemma \ref{Lem:m=point mass}). In order to use 
Theorem \ref{Thm:improvement of theorem 1.1} and Lemma \ref{Lem:m=point mass}, 
we introduce the n-superharmonic function $\mathcal{A}(v)$ (cf. \eqref{Equ:definition of v^+}) 
to overshadow the conformal factor $v$ based on the idea in the proof of 
\cite[Theorem 2.4]{KM92} on solving n-Laplace equations with nonnegative Radon measure on the right. For this purpose, 
we first need to show that the conformal factor $v$ satisfies the n-Laplace equation with a signed Radon measure on the right. 
We therefore obtain the following finite point conformal compactification theorem of Huber's type. 

\begin{theorem}\label{Thm:intro-2}
For $n\geq 3$, let $\Omega$ be a domain in the standard unit round sphere $(\mathbb{S}^n, g_{\mathbb{S}})$.
Suppose that, on $\Omega$, there is a conformal metric $g= e^{2u}g_{\mathbb{S}}$ satisfying
$$
\lim_{x\to\partial\Omega}u(x)=+\infty \text{ and } 
Ric^{-}_g|\nabla u|^{n-2}e^{2u} \in L^{1}(\Omega,g_{\mathbb{S}}).
$$
Then $\partial\Omega = \mathbb{S}^n\setminus\Omega$ is a finite point set. 
\end{theorem}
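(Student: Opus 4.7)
The plan is to push the analysis into a Euclidean chart where equation \eqref{Equ:n-laplace-intro} becomes source-driven, fabricate an $n$-superharmonic majorant $\mathcal{A}(v)$ of the conformal factor, apply the strengthened Arsove-Huber theorem at each boundary point, and then use local finiteness of the resulting positive Radon measure together with completeness to count points.

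\textbf{Step 1 (stereographic reduction).} Pick $N\in\Omega$ away from $\partial\Omega$ and compose with stereographic projection $\pi:\mathbb{S}^n\setminus\{N\}\to\mathbb{R}^n$. Write $g_{\mathbb{S}}=e^{2\phi}g_{\mathbb{R}^n}$ and set $v:=u\circ\pi^{-1}+\phi$, so that $g=e^{2v}g_{\mathbb{R}^n}$ on the bounded open set $\Omega':=\pi(\Omega\setminus\{N\})$. Let $\Sigma:=\pi(\partial\Omega)$, a compact subset of $\mathbb{R}^n$. Completeness gives $v(x)\to +\infty$ as $x\to\Sigma$, and the hypothesis $Ric^-_g|\nabla u|^{n-2}e^{2u}\in L^1(\Omega,g_{\mathbb{S}})$ transfers to an $L^1(\Omega',dx)$ bound on the negative part of the source term of equation \eqref{Equ:n-laplace-intro} rewritten for $v$.

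\textbf{Step 2 (signed distributional equation).} Since $Ric_{g_{\mathbb{R}^n}}=0$, equation \eqref{Equ:n-laplace-intro} reduces classically on $\Omega'$ to
$$
-\Delta_n v \,=\, (Ric(\nabla v)|\nabla v|^{n-2})_g\, e^{nv} \;=:\; f,
$$
with $f\in L^1_{\mathrm{loc}}(\Omega')$ and $f^-\in L^1(\Omega',dx)$. Thus $v$ satisfies $-\Delta_n v = d\mu$ for a signed Radon measure $\mu$ whose negative part $\mu^-$ extends to a finite Radon measure on a bounded neighborhood $U$ of $\Sigma$.

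\textbf{Step 3 (constructing $\mathcal{A}(v)$).} Following \cite[Theorem 2.4]{KM92}, absorb $\mu^-$ by adding to $v$ a suitable $n$-superharmonic correction to obtain the function $\mathcal{A}(v)$ defined in \eqref{Equ:definition of v^+}, which is $n$-superharmonic on $U$, dominates $v$, still tends to $+\infty$ along $\Sigma$, and satisfies $-\Delta_n \mathcal{A}(v)=d\nu$ for a nonnegative Radon measure $\nu$ on $U$.

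\textbf{Step 4 (local asymptotics at each point of $\Sigma$).} For each $p\in\Sigma$, apply Theorem \ref{Thm:improvement of theorem 1.1} to $\mathcal{A}(v)$ on a small ball about $p$. Its key novelty—that $p$ need not be isolated in the singular set—combined with $\mathcal{A}(v)\geq v\to +\infty$ at $p$, yields
$$
\mathcal{A}(v)(x) \,=\, -m(p)\log|x-p|+O(1)\quad\text{as } x\to p,\qquad m(p)\geq 1,
$$
and Lemma \ref{Lem:m=point mass} identifies $m(p)$ with the atomic mass $\nu(\{p\})$.

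\textbf{Step 5 (counting).} Since $\nu$ is a Radon measure on $U$ and $\Sigma\subset U$ is compact,
$$
\infty \,>\, \nu(\Sigma) \,\geq\, \sum_{p\in\Sigma}\nu(\{p\}) \,\geq\, \#\Sigma,
$$
forcing $\Sigma$, and hence $\partial\Omega = \mathbb{S}^n\setminus\Omega$, to be finite.

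\textbf{Main obstacle.} The delicate step is Step 3: one must first establish that $v$ satisfies an $n$-Laplace equation with a signed Radon source distributionally across the a priori wild set $\Sigma$, and then build $\mathcal{A}(v)$ so that its $n$-Laplacian is a genuine nonnegative Radon measure admitting atoms on $\Sigma$. Without this, neither Theorem \ref{Thm:improvement of theorem 1.1} nor Lemma \ref{Lem:m=point mass} can be brought to bear. A secondary subtlety is checking the completeness lower bound $m(p)\geq 1$ in the strengthened theorem, which requires that the correction $\mathcal{A}(v)-v$ remain bounded (or at least subordinate to the logarithmic divergence) near each $p\in\Sigma$.
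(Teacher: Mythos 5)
Your outline follows the same route as the paper (stereographic projection, a signed Radon measure interpretation of $-\Delta_n v$, the majorant $\mathcal{A}(v)$ built after \cite[Theorem 2.4]{KM92}, the strengthened Arsove--Huber statement plus the point-mass lemma, and a mass-counting conclusion), but it has a genuine gap exactly where you flag the "main obstacle": your Step 2 asserts, with a "Thus", that $-\Delta_n v$ extends to a signed Radon measure on a neighborhood $U$ of $\Sigma$ with finite negative part, and implicitly (for Steps 3--5 to work) with finite total mass and \emph{nonnegative} singular part on $\Sigma$. None of this follows from $f\in L^1_{\mathrm{loc}}(\Omega')$ and $f^-\in L^1$: the distribution $-\Delta_n v$ is a priori only defined away from $\Sigma$, and the hypothesis controls only the negative part of the source. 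The paper spends Steps 1--2 establishing precisely this: (i) the concave cutoffs $\alpha_s(v)$ satisfy $-\Delta_n\alpha_s(v)=(\alpha_s'(v))^{n-1}(-\Delta_n v)-(n-1)(\alpha_s'(v))^{n-2}\alpha_s''(v)|\nabla v|^n$, so $f_s^-\le f^-$, and the divergence identity $\int_D-\Delta_n\alpha_s(v)=\int_{\partial D}|\nabla v|^{n-2}\partial_\nu v$ plus Fatou give $f^+\in L^1$ near $\Sigma$ (Lemma \ref{Lem:B-step-1}); (ii) the uniform gradient bound $\nabla v\in L^q$, $q<n$ (Lemma \ref{Lem:L^{n-1}}), which is what makes $|\nabla v|^{n-2}\nabla v$ integrable across $\Sigma$ so that $-\Delta_n v$ is even defined there, and whose proof needs $cap_n(D_l,D_k)\to 0$, i.e.\ the conclusion of Theorem \ref{Thm:theorem-A} -- this is where the hypothesis $Ric^-_g|\nabla u|^{n-2}e^{2u}\in L^1$ actually enters; (iii) dominated convergence of $-\Delta_n\alpha_s(v)$ to $-\Delta_n v$, which yields both the Radon-measure property and the nonnegativity of the singular part on $\Sigma$ (Lemma \ref{Lem:radon}), the latter being indispensable for identifying $m(p)$ with the atom of $(-\Delta_n v)^+=-\Delta_n\mathcal{A}(v)$ at $p$ in your Step 4. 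Without supplying (i)--(iii) your Steps 3--5 have nothing to act on, so the proposal is a correct plan but not a proof.

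Two further points. In Step 3, "absorb $\mu^-$ by adding to $v$ a suitable $n$-superharmonic correction" cannot be taken literally: $\Delta_n$ is nonlinear, so superposition fails. The paper instead solves the Dirichlet problem $-\Delta_n\mathcal{A}(v)=(-\Delta_n v)^+$ in $D$ with $\mathcal{A}(v)=v$ on $\partial D$ (through the approximations $\mathcal{A}_s(v)$, existence by \cite{BG89}, convergence as in \cite{KM92}) and proves domination by a comparison argument (Lemma \ref{Lem:comparison}), giving $v\le\mathcal{A}(v)+1$. This also resolves your "secondary subtlety" in the opposite direction from the one you worry about: for $m(p)\ge 1$ one only needs completeness of $e^{2\mathcal{A}(v)}|dx|^2$ at $p$, which follows from the one-sided bound $\mathcal{A}(v)\ge v-1$; no bound on $\mathcal{A}(v)-v$ from above is needed. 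Finally, Theorem \ref{Thm:improvement of theorem 1.1} gives only a lower bound $\mathcal{A}(v)\ge m\log\frac 1{|x-p|}-C$ together with the limit off an $n$-thin set, not the two-sided expansion $-m\log|x-p|+O(1)$ you wrote, and the counting bound should read $\nu(\Sigma)\ge n\omega_n\#\Sigma$ via Lemma \ref{Lem:m=point mass}; neither affects the finiteness conclusion.
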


It is easily seen that Theorem \ref{Thm:intro-2} holds when the Ricci is nonnegative outside a compact subset or Cheng-Yau's gradient estimates (cf. \cite[Theorem 2.12, Chapter VI]{SY94} and \cite[Lemma 2.3]{CQY00}) applies. Here we use \cite[Proposition 8.1]{CHY04} to derive 
$\lim_{x\to\partial\Omega}u(x)=+\infty$. We therefore have the following finite point conformal compactification theorem of Huber's type in a more intrinsically 
geometric fashion. 
\begin{corollary}\label{Cor:intro-1}
For $n\geq 3$, let $\Omega$ be a domain in the standard unit round sphere $(\mathbb{S}^n, g_{\mathbb{S}})$.
Suppose that, on $\Omega$, there is a complete conformal metric $g= e^{2u}g_{\mathbb{S}}$ satisfying either $Ric_g$ is nonnegative outside
a compact subset or 
\begin{enumerate}
\item $Ric^{-}_g \in L^{1}(\Omega, g) \cap L^\infty(\Omega, g)$
\item $R_g \in L^{\infty}(\Omega, g) \text{ and } |\nabla^g R_g|\in L^{\infty}(\Omega, g)$. 
\end{enumerate}
Then $\partial\Omega = \mathbb{S}^n\setminus\Omega$ is a finite point set. 
\end{corollary}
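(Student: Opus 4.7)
The plan is to reduce Corollary~\ref{Cor:intro-1} to Theorem~\ref{Thm:intro-2} by verifying, under each of the listed geometric hypotheses, that the conformal factor $u$ blows up on approach to $\partial\Omega$ and that $Ric^-_g|\nabla u|^{n-2}e^{2u}$ is integrable over $(\Omega,g_{\mathbb{S}})$. A direct conformal change computation, using $dV_g = e^{nu}\,dV_{g_{\mathbb{S}}}$ and $|\nabla u|_g = e^{-u}|\nabla u|_{g_{\mathbb{S}}}$, gives the identity
\[
\int_\Omega Ric^-_g\,|\nabla u|_{g_{\mathbb{S}}}^{\,n-2}\,e^{2u}\,dV_{g_{\mathbb{S}}} \;=\; \int_\Omega Ric^-_g\,|\nabla u|_g^{\,n-2}\,dV_g,
\]
so the $L^1$ hypothesis of Theorem~\ref{Thm:intro-2} is equivalent to finiteness of the right-hand side. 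This reformulation suggests splitting the proof into two steps: first an a priori bound on $|\nabla u|_g$ near the boundary, then the deduction of both the blow-up at $\partial\Omega$ and the integrability.

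The first step is to establish that there is a neighborhood $U$ of $\partial\Omega$ in $\Omega$ on which $|\nabla u|_g \leq C$. If $Ric_g \geq 0$ outside a compact set $K\Subset\Omega$, then on $\Omega\setminus K$ the classical Cheng-Yau gradient estimate \cite[Theorem~2.12, Chapter~VI]{SY94} applied to the $n$-Laplace equation \eqref{Equ:n-laplace-intro} yields the bound. Under hypothesis (1), $Ric^-_g \in L^\infty$ supplies a global Ricci lower bound, again triggering Cheng-Yau. Under hypothesis (2), bounded $R_g$ and $|\nabla^g R_g|$ feed \cite[Lemma~2.3]{CQY00} to deliver the same conclusion, which is precisely the situation addressed by \cite[Proposition~8.1]{CHY04} in the comment preceding the corollary. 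Once $|\nabla u|_g \leq C$ near $\partial\Omega$, the relation $|\nabla u|_{g_{\mathbb{S}}} = e^u|\nabla u|_g$ says that $e^{-u}$ is $g_{\mathbb{S}}$-Lipschitz on $U$; thus if there were a sequence $x_k \to p \in \partial\Omega$ with $u(x_k)$ bounded above, short $g_{\mathbb{S}}$-segments near $p$ would have finite $g$-length $\int e^u\,ds_{g_{\mathbb{S}}} < \infty$, producing a $g$-Cauchy curve terminating at $p \notin \Omega$ and contradicting completeness of $g$. Hence $u(x) \to +\infty$ as $x \to \partial\Omega$, securing the first hypothesis of Theorem~\ref{Thm:intro-2}.

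For the integrability it remains to bound $\int_\Omega Ric^-_g\,|\nabla u|_g^{n-2}\,dV_g$. When $Ric_g \geq 0$ outside a compact subset, the integrand is compactly supported in $\Omega$ and hence integrable by the local smoothness of $g$. When (1) and (2) are both in force, the gradient bound from the previous step combines with $Ric^-_g \in L^1(\Omega,g)$ from (1) to yield
\[
\int_\Omega Ric^-_g\,|\nabla u|_g^{\,n-2}\,dV_g \;\leq\; C^{\,n-2}\,\|Ric^-_g\|_{L^1(\Omega,g)} \;<\; \infty.
\]
Both hypotheses of Theorem~\ref{Thm:intro-2} are thereby verified, and the corollary follows. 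The main technical obstacle I anticipate is the Cheng-Yau step itself: one has to verify that each cited gradient estimate applies uniformly up to $\partial\Omega$ in the degenerate-elliptic setting of the $n$-Laplace equation, and that the formulations in \cite{SY94} and \cite{CQY00} dovetail with the present hypotheses. Once this uniform bound is secured, the completeness-to-blow-up deduction and the final estimate for the integral are essentially formal.
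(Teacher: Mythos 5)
Your overall reduction is the same as the paper's: verify the two hypotheses of Theorem \ref{Thm:intro-2} by means of a Cheng--Yau gradient bound $|\nabla u|_g\le C$ (equivalently $|\nabla u|_{g_{\mathbb{S}}}\le Ce^{u}$), and then estimate $\int_\Omega Ric^-_g|\nabla u|^{n-2}e^{2u}\,dV_{g_{\mathbb{S}}}\le C\|Ric^-_g\|_{L^1(\Omega,g)}$; your conversion identity and this final estimate are exactly what the paper does in Corollary \ref{Cor:cheng-yau}. Two remarks. First, the gradient estimate is applied to the \emph{scalar curvature equation} $\Delta_g\bigl(e^{-\frac{n-2}{2}u}\bigr)=\frac{n-2}{4(n-1)}R_g\,e^{-\frac{n-2}{2}u}$, which is uniformly elliptic, not to the degenerate $n$-Laplace equation \eqref{Equ:n-laplace-intro}; so the technical obstacle you flag at the end is not the real issue, but note that this estimate (in the form \cite[Lemma 2.3]{CQY00} or \cite[Theorem 2.12, Chapter VI]{SY94}) needs the Ricci lower bound \emph{and} the bounds on $R_g$ and $|\nabla^g R_g|$ simultaneously, i.e.\ hypotheses (1) and (2) together; neither (1) nor (2) alone "triggers" it as you suggest. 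Second, under (1)+(2) your derivation of $u\to+\infty$ from the Lipschitz bound on $e^{-u}$ plus completeness is correct and is a genuinely different (more self-contained) route than the paper's, which instead quotes \cite[Proposition 8.1]{CHY04}.

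The genuine gap is in the first alternative, where only ``$Ric_g\ge 0$ outside a compact set $K$'' is assumed. There Cheng--Yau does \emph{not} give $|\nabla u|_g\le C$ on $\Omega\setminus K$: the function $w=e^{-\frac{n-2}{2}u}$ merely satisfies $\Delta_g w=\frac{n-2}{4(n-1)}R_g w$ with $R_g\ge 0$ possibly unbounded and with no control on $|\nabla^g R_g|$, so it is just a positive solution of a Schr\"odinger equation with nonnegative, uncontrolled potential, and no uniform gradient bound follows (positivity plus subharmonicity gives nothing of the sort). Since your proof of the boundary blow-up in this case rests entirely on that bound, the verification of the first hypothesis of Theorem \ref{Thm:intro-2} is missing for this alternative. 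The repair is the paper's own step: because $R^-_g$ is compactly supported (hence $R_g$ is bounded below, and $R^-_g\in L^p(\Omega,g)$ for every $p$), \cite[Proposition 8.1]{CHY04} --- or Lemma \ref{Lem:CYH-improved} --- yields $u\to+\infty$ at $\partial\Omega$ directly, without any gradient estimate; your compact-support observation then settles the integrability in that case, as you already noted.
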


Corollary \ref{Cor:intro-1} should compared with \cite[Theorem 2]{CQY00}. 
\cite[Theorem 2]{CQY00} uses the $Q$-curvature equation in 4 dimensions 
while Corollary \ref{Cor:intro-1} uses the n-Laplace equation \eqref{Equ:n-laplace-intro} in general dimenions.  At the end of Section \ref{Subsec:finite point}, 
we will present a detailed comparison of Theorem \ref{Thm:intro-2} and Corollary \ref{Cor:intro-1} with \cite[Theorem 2.1]{CH02} 
and \cite[Corollary 2.4]{Car20}.
\\

To push for locally conformally flat manifolds that are not a priorly known to be domains in $\mathbb{S}^n$, 
we develop injectivity theorems for conformal immersions 
following the PDE approach in the seminal paper \cite{SY88}. To substitute the nonnegativity of the scalar curvature, we consider volume growth
conditions, p-parabolicity, and integral lower bounds of Ricci curvature. First we establish, as a side product, 
a very clean injectivity theorem based on
n-parabolicity alone, which should be interesting by itself in conformal geometry and geometric group theory.

\begin{theorem}\label{Thm:intro-3}
 Suppose that $(M^n, g)$ ($n\geq 3$) is a complete noncompact manifold that has a conformal immersion
$$
\Phi: (M^n, g)\to(\mathbb{S}^n, g_{\mathbb{S}}).
$$
Assume that $(M^n, g)$ is n-parabolic. Then $\Phi$ is injective.
\end{theorem}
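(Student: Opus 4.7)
Argue by contradiction: assume $\Phi$ is not injective and choose distinct $p_1,p_2\in M$ with $\Phi(p_1)=\Phi(p_2)=y_0$. After post-composing $\Phi$ with a Möbius transformation of $\mathbb{S}^n$, one may assume $y_0$ is the north pole $N$, and composing with the stereographic projection $\pi_N\colon \mathbb{S}^n\setminus\{N\}\to\mathbb{R}^n$ yields a conformal immersion
\[
\Psi=\pi_N\circ\Phi\colon M\setminus\Phi^{-1}(N)\longrightarrow\mathbb{R}^n,
\]
with $|\Psi(x)|\to\infty$ as $x\to p$ for every $p\in\Phi^{-1}(N)$. Writing $\Psi^{*}g_{\mathrm{Eucl}}=e^{2v}g$ on $M\setminus\Phi^{-1}(N)$, the conformal factor $v$ satisfies $v(x)\to+\infty$ at both $p_1$ and $p_2$.

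Because the Euclidean target is Ricci flat, equation~\eqref{Equ:n-laplace-intro} reduces on $M\setminus\Phi^{-1}(N)$ to the homogeneous identity
\[
-\Delta_n^{g}v + Ric^{g}(\nabla v)|\nabla v|^{n-2}=0.
\]
The next step is to promote this local identity to a distributional n-Laplace equation on all of $M$ whose right-hand side is a signed Radon measure supported on $\Phi^{-1}(N)$, and then, following the Kilpeläinen--Malý idea \cite{KM92} used in the proof of Theorem~\ref{Thm:intro-2}, to pass to the auxiliary function $\mathcal{A}(v)$ of~\eqref{Equ:definition of v^+}. This function absorbs the Ricci cross term and is genuinely n-superharmonic on $M$, that is, its distributional n-Laplacian is a nonnegative Radon measure $\mu$ on $M$. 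Applying the strengthened Arsove--Huber theorem (Theorem~\ref{Thm:improvement of theorem 1.1}) together with the point-mass identification in Lemma~\ref{Lem:m=point mass} shows that $\mu$ assigns strictly positive point mass to each $p_i$ at which $v$ blows up; in particular $\mu(\{p_1\}), \mu(\{p_2\})>0$, so $\mathcal{A}(v)$ is non-constant.

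After adding a positive constant, $\mathcal{A}(v)$ is a positive, non-constant, n-superharmonic function on $(M,g)$. This directly contradicts the Liouville characterization of n-parabolicity, namely that every positive n-superharmonic function on an n-parabolic manifold is constant. Hence $\Phi$ must be injective.

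\textbf{Main obstacle.} The delicate technical step is the extension of the n-Laplace identity for $v$ across the singular set $\Phi^{-1}(N)$ and the subsequent identification of a strictly positive point mass at each $p_i$. The Ricci cross term prevents treating $v$ itself as n-superharmonic, forcing one to work with $\mathcal{A}(v)$ in the spirit of \cite{KM92} and to combine it with the strengthened Arsove--Huber machinery from Section~2; carrying this through at each interior preimage of $y_0$ is the heart of the argument. Once the positive point masses are in hand, the n-parabolicity hypothesis closes the proof by a clean Liouville-type step.
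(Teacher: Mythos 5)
Your proposal has a genuine gap, and it is located exactly where you flag the ``main obstacle'': under the hypotheses of this theorem there is no curvature assumption at all, so the machinery you want to borrow from the proof of Theorem \ref{Thm:intro-2} is not available. Writing $\Psi^*g_{\mathrm{Eucl}}=e^{2v}g$, the identity you obtain is $-\Delta_n^{g}v=-\,Ric_g(\nabla v)|\nabla v|^{n-2}$ on $M\setminus\Phi^{-1}(N)$, and the right-hand side is a function involving the Ricci curvature of $(M,g)$, which here is completely uncontrolled (no $L^1$ or $L^{n/2}$ bound, no sign). In the proof of Theorem \ref{Thm:theorem-B} it is precisely the hypothesis $Ric^-_g|\nabla u|^{n-2}e^{2u}\in L^1$ that makes $f^-$ integrable, which is what drives Steps 1--2 (Lemma \ref{Lem:B-step-1}, Lemma \ref{Lem:radon}, Lemma \ref{Lem:L^{n-1}}) showing that $-\Delta_n v$ is a signed Radon measure; only then can one split off $(-\Delta_n v)^+$ and solve \eqref{Equ:definition of v^+} in the spirit of \cite{KM92} to produce $\mathcal{A}(v)$. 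Without any integrability of $Ric^-_g|\nabla v|^{n-2}$, the distributional $n$-Laplacian of $v$ need not be a measure, $(-\Delta_n v)^+$ is not defined, and $\mathcal{A}(v)$ cannot be constructed; consequently Theorem \ref{Thm:improvement of theorem 1.1} and Lemma \ref{Lem:m=point mass} (which are moreover Euclidean, local statements about $n$-superharmonic functions, not statements for the curved operator $\Delta_n^g$ near interior points of $M$) cannot be invoked to produce point masses at $p_1,p_2$. Your closing Liouville step also needs a globally defined positive $n$-superharmonic function on $M$, whereas the construction \eqref{Equ:definition of v^+} only lives on a bounded domain with prescribed boundary values, so even that step would require additional work.

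The paper proves this theorem by a different, curvature-free route following \cite{SY88}: take $v=G_o/\bar G_o$, the ratio of the minimal Green function of the conformal Laplacian on $(M,g)$ and the pull-back of the spherical Green function. Since both $G_o^{4/(n-2)}g$ and $\bar g=\bar G_o^{4/(n-2)}g$ are scalar-flat, $v$ is harmonic with respect to the flat metric $\bar g$, so no Ricci term ever appears. The Bochner formula then gives the estimate \eqref{Equ:difference-2}, whose first factor $\bigl(\int_M|\nabla\phi|^n\,dvol\bigr)^{2/n}$ is made arbitrarily small using $n$-parabolicity directly (this is the only place the hypothesis enters), while the second factor $\int_{supp\,\nabla\phi}|\bar\nabla v|^2\,\overline{dvol}$ is bounded using the harmonicity of $v$ and the integrability of $G_o^{2n/(n-2)}$ from \cite[Corollary 2.3]{SY88}. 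One concludes $v\equiv 1$, hence injectivity. If you want to pursue your strategy, you would have to add a curvature integrability hypothesis, at which point you are essentially back in the setting of Theorem \ref{Thm:intro-2} rather than proving the present statement.
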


Following closely from \cite[Theorem 3.1]{SY88}, we prove an injectivity theorem using integral lower bound on Ricci curvature, where the volume growth estimates \cite{Gal88} are employed. Namely,

\begin{theorem}\label{Thm:intro-4}
Suppose that $(M^n, g)$ is a complete, noncompact manifold. And suppose that there is a conformal immersion
$$
\Phi: (M^n, g) \to (\mathbb{S}^n, g_{\mathbb{S}}).
$$
Assume that $Ric_g^-\in L^\frac p2(M, g)$ for some $p\in (n, \infty]$. Then $\Phi$ is injective provided that
$$
d(M)< \frac {(n-2)^2p}{(p-2)n}.
$$
\end{theorem}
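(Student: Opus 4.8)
The plan is to mimic the scheme of \cite[Theorem 3.1]{SY88}, replacing the use of nonnegative scalar curvature (which there forces the developing map to have image of full measure, hence to be injective) by the volume growth estimate of Gallot \cite{Gal88} under the hypothesis $Ric_g^-\in L^{p/2}(M,g)$. First I would fix a point $p_0\in M$ and, as in \cite{SY88}, suppose for contradiction that $\Phi$ fails to be injective; then by developing $\Phi$ along loops one produces a nontrivial element of the holonomy, i.e. a M\"obius transformation $\gamma$ of $\SS^n$ fixing no point of the image in the appropriate sense, or else an overlap $\Phi(x)=\Phi(y)$ with $x\neq y$ that one propagates. Following \cite{SY88}, the key analytic object is the pulled-back Green's function type quantity on $(M^n,g)$ that measures the defect of injectivity: one builds a positive superharmonic (for the conformal Laplacian, or here for $\Delta_n$/$\Delta$ as appropriate) function that is bounded on $M$ but would have to be unbounded if $\Phi$ were non-injective, deriving a contradiction with parabolic-type behavior of $(M^n,g)$.

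The heart of the argument, and where the curvature hypothesis enters quantitatively, is the volume growth. Under $Ric_g^-\in L^{p/2}(M,g)$ with $p>n$, Gallot's estimate (Lemma \ref{Lem:gallot} in Section \ref{Sec:preli}) gives an upper bound on $\mathrm{vol}(B_r(p_0))$ in terms of $r$ and the integral norm of $Ric^-$; combined with $d(M)$ — the relevant diameter/dimension-type invariant appearing in the statement — the inequality
$$
d(M)< \frac{(n-2)^2 p}{(p-2)n}
$$
is exactly the threshold ensuring that $(M^n,g)$ is $q$-parabolic for the exponent $q$ dual to the Sobolev/Green's function analysis (here the conformal exponent $n$, or the exponent $\frac{2n}{n-2}$ entering the Yamabe operator in \cite{SY88}). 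So the second step is: from the volume growth bound, deduce the parabolicity statement that makes the superharmonic function from step one constant, hence forces the developing map to be a covering onto its image, hence — since $\SS^n$ is simply connected — a diffeomorphism onto an open subset, and then injective.

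The main obstacle I expect is keeping track of the precise exponent bookkeeping that produces the sharp constant $\frac{(n-2)^2p}{(p-2)n}$: one must feed Gallot's volume bound into the capacity estimate for the boundary at infinity (as in the proof of Theorem \ref{Thm:intro-1}), verify that the borderline case $p=\infty$ recovers the expected bound $d(M)<\frac{(n-2)^2}{n}$, and ensure that the "weak" curvature negativity allowed here is still compatible with the maximum-principle argument on the developing side from \cite{SY88} — i.e., that the n-Laplace equation \eqref{Equ:n-laplace-intro} continues to give a distributional supersolution with the right sign after pulling back by $\Phi$. Once the volume growth is converted to $n$-parabolicity (or the relevant $p$-parabolicity) under the stated diameter bound, injectivity follows either directly from Theorem \ref{Thm:intro-3} or by repeating its proof with the weaker hypothesis in place.
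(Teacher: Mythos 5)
Your broad outline is in the right neighborhood — follow Schoen--Yau \cite{SY88}, bring in Gallot's volume estimate from the $L^{p/2}$ bound on $Ric^-$, and feed the $d(M)$ threshold into an integral that must converge — but two of the mechanisms you describe are not what actually makes the proof go, and one of them is a genuine error that would derail the argument.

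First, the object whose vanishing forces injectivity is not a superharmonic function that "would be unbounded if $\Phi$ were non-injective," nor is it a developing/covering argument. Following \cite{SY88}, one takes $v = G_o/\bar G_o$, the ratio of the minimal Green function for the conformal Laplacian on $(M,g)$ to the pull-back via $\Phi$ of the Green function on the sphere. Since both $G_o^{4/(n-2)}g$ and $\bar G_o^{4/(n-2)}g$ are scalar-flat, $v$ is harmonic for the flat metric $\bar g$, satisfies $0<v\le 1$, and injectivity of $\Phi$ is equivalent to $v\equiv 1$. The Bochner formula $\bar\Delta|\bar\nabla v|^2 = 2|\bar\nabla\bar\nabla v|^2$ then yields, with $\alpha=\frac{2(n-2)}{n}$, the cutoff inequality
$$
\int_M \phi^2 |\bar\nabla v|^{\alpha-2}\bigl|\bar\nabla|\bar\nabla v|\bigr|^2\, d\bar{vol} \le C\int_M |\nabla\phi|^2\,\bar G_o^\alpha|\nabla v|^\alpha\, dvol .
$$
The new step, and the place where the constant $\frac{(n-2)^2p}{(p-2)n}$ comes from, is to apply H\"older with exponent $p$ to the right-hand side, splitting it into $(\int|\nabla\phi|^p)^{2/p}$ times $(\int_{\operatorname{supp}\nabla\phi}(\bar G_o^\alpha|\nabla v|^\alpha)^{p/(p-2)})^{1-2/p}$. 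You never perform this split, and without it the two hypotheses $Ric^-\in L^{p/2}$ and $d(M)<\frac{(n-2)^2p}{(p-2)n}$ cannot be decoupled.

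Second, and more seriously, you attribute the role of $d(M)$ to parabolicity, saying the inequality on $d(M)$ "is exactly the threshold ensuring that $(M,g)$ is $q$-parabolic." This is incorrect. Parabolicity (in the form $p$-parabolicity, from Lemma \ref{Lem:ricci-parabolicity} via Gallot) follows from $Ric^-\in L^{p/2}$ alone and kills the factor $(\int|\nabla\phi|^p)^{2/p}$; it has nothing to do with $d(M)$. The role of $d(M)<\frac{(n-2)^2p}{(p-2)n}$ is entirely to make the \emph{other} H\"older factor finite: writing $\beta = \alpha\cdot\frac{p}{p-2}=\frac{2p(n-2)}{n(p-2)}$, one needs $\int_{M\setminus\mathcal O} G_o^\beta<\infty$ and $\int_{M\setminus\mathcal O} G_o^{\beta p/(p-2)}<\infty$, and that is precisely what $d(M)<\frac{n-2}{2}\beta=\frac{(n-2)^2p}{(p-2)n}$ guarantees by definition of $d(M)$. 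Conflating these two conditions makes the exponent bookkeeping impossible to close. Relatedly, your closing suggestion that one could instead invoke Theorem \ref{Thm:intro-3} does not work: that theorem needs $n$-parabolicity, while the Gallot-based hypothesis here only yields $p$-parabolicity for $p>n$, and, as noted in Section \ref{Subsec:capacity-parabolicity}, $n$-parabolicity is out of reach by this route. The $n$-Laplace equation \eqref{Equ:n-laplace-intro} also plays no role in this particular proof; the operator governing $v$ is the ordinary Laplacian of $\bar g$.
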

For the definition of $d(M)$, please see \eqref{Equ:d(m)} and \cite{SY88}. Again, with a refined argument of the proof 
of \cite[Proposition 2.4]{SY88}, we have

\begin{proposition}\label{Prop:intro-1}
Suppose that $(M^n, g)$ ($n\geq 3$) is a manifold that has a conformal immersion 
$$
\Phi: (M^n, g) \to (\mathbb{S}^n, g_{\mathbb{S}}).
$$ 
Assume that $R_g^-\in L^\frac n2(M, g)$. Then $d(M) \leq \frac n2$.
\end{proposition}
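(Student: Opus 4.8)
The plan is to follow the strategy of \cite[Proposition 2.4]{SY88}, which controls the developing-map distortion by integrating the scalar-curvature equation against a well-chosen test function, but to refine it so that only the $L^{n/2}$-norm of $R_g^-$ enters. Writing the conformal immersion as $\Phi^*g_{\mathbb{S}} = e^{2w}g$ on the universal-type cover where $\Phi$ lifts, the scalar curvatures of $g$ and of $e^{2w}g$ are related by the standard equation $\Delta_g w + (n-2)|\nabla w|^2 \cdot(\text{const}) \sim (R_g e^{-2w?}\dots)$; more precisely one uses the conformal transformation law for $R$ under $g \mapsto e^{2w}g$, namely a Yamabe-type identity of the form $-\frac{4(n-1)}{n-2}\Delta_g \varphi + R_g\varphi = R_{e^{2w}g}\varphi^{\frac{n+2}{n-2}}$ with $\varphi = e^{\frac{n-2}{2}w}$, and the fact that $R_{e^{2w}g} = R_{g_{\mathbb{S}}}\circ\Phi \equiv n(n-1) > 0$ since $\Phi$ is a conformal immersion into the round sphere. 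The quantity $d(M)$ from \eqref{Equ:d(m)} is defined (following \cite{SY88}) in terms of the growth/decay exponent governing $\varphi$, so bounding it amounts to an integral estimate on $\varphi$ via this equation.

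The key steps, in order, are as follows. First, set $\varphi = e^{\frac{n-2}{2}w} > 0$ and record the Yamabe equation above, noting that the right-hand side is $n(n-1)\varphi^{\frac{n+2}{n-2}} \geq 0$. Second, test this equation against $\varphi\eta^2$ for a cutoff $\eta$ adapted to geodesic balls (exactly as in \cite{SY88}), integrate by parts, and use the sign of the right-hand side to discard it, obtaining
\begin{equation*}
\frac{4(n-1)}{n-2}\int \eta^2|\nabla\varphi|^2 \leq \frac{8(n-1)}{n-2}\int \eta\varphi|\nabla\varphi||\nabla\eta| - \int R_g\varphi^2\eta^2 + (\text{positive term discarded}).
\end{equation*}
Third, split $R_g = R_g^+ - R_g^-$, drop the good $R_g^+$ term, and estimate the bad term $\int R_g^-\varphi^2\eta^2$ by Hölder with exponents $n/2$ and $n/(n-2)$, producing $\|R_g^-\|_{L^{n/2}(\mathrm{supp}\,\eta,g)}\cdot\|\varphi\eta\|_{L^{2n/(n-2)}}^2$. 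Fourth, apply the Sobolev inequality on $(M,g)$ to bound $\|\varphi\eta\|_{L^{2n/(n-2)}}^2$ by $\int(|\nabla(\varphi\eta)|^2)$, which reintroduces the gradient term with a coefficient proportional to $\|R_g^-\|_{L^{n/2}}$. Fifth, absorb and use Cauchy–Schwarz on the cross terms to arrive at a differential inequality for the function $r \mapsto \int_{B_r}\varphi^2$ (or the appropriate energy), whose integration yields a polynomial growth bound on $\varphi$ with exponent controlled by a dimensional constant; unwinding the definition \eqref{Equ:d(m)} then gives $d(M) \leq \frac{n}{2}$.

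The main obstacle I anticipate is the bookkeeping in the fourth and fifth steps: the absorption argument only closes if the Sobolev constant times $\|R_g^-\|_{L^{n/2}}$ can be tamed, and in general this norm need not be small. The resolution — which is exactly the ``refined argument'' alluded to in the statement — is to run the cutoff on \emph{annuli} rather than balls, so that $\int_{B_{2R}\setminus B_R}(R_g^-)^{n/2} \to 0$ as $R\to\infty$ (since $R_g^-\in L^{n/2}(M,g)$ globally), making the coefficient eventually small and the absorption legitimate for large $R$. One must then propagate the resulting decay/growth estimate from large radii back to a global bound on the exponent in \eqref{Equ:d(m)}; this is where care is needed, but it is the same mechanism by which \cite{SY88} handles the non-compact case, now with the $L^{n/2}$ smallness-at-infinity replacing pointwise curvature sign assumptions.
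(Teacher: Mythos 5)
Your proposal targets the wrong function. The quantity $d(M)$ in \eqref{Equ:d(m)} is defined via the minimal Green function $G_o$ of the conformal Laplacian $-\Delta_g + \frac{n-2}{4(n-1)}R_g$ on $M$, not via the conformal factor $\varphi = e^{\frac{n-2}{2}w}$ of the immersion $\Phi$. These are genuinely different objects: $\varphi$ (up to bounded factors away from $\Phi(o)$) is essentially the pull-back $\bar G_o$ of the spherical Green function, and on a non-injective immersion $\bar G_o$ can fail to be integrable in any power, while $G_o$ is intrinsic and minimal. The paper's proof (Proposition \ref{Prop:d(M)-estimate}) works directly with exhaustion approximants $G_i \to G_o$, multiplies the Green function equation $-\Delta G_i + \frac{n-2}{4(n-1)}R_g G_i = 0$ (on $U_i\setminus\mathcal{O}$) by $G_i^{\varepsilon}$ for \emph{small} $\varepsilon > 0$, applies H\"older with the $L^{n/2}$ norm of $R^-_g$, uses the Sobolev inequality of \cite[Proposition 2.2]{SY88}, and absorbs by taking the base neighborhood $\mathcal{O}$ large enough that $\|R^-_g\|_{L^{n/2}(M\setminus\mathcal{O})}$ is small. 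This yields $\int_{U_i\setminus\mathcal{O}} G_i^{\frac{n(1+\varepsilon)}{n-2}} \leq C(\varepsilon)$, and since $\frac{n-2}{2}\cdot\frac{n(1+\varepsilon)}{n-2} \to \frac{n}{2}$ as $\varepsilon \to 0$, one reads off $d(M)\leq \frac{n}{2}$. Your proposal never returns to $G_o$, so it does not actually prove the stated bound.

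There is also a sign problem in your second step. Testing $-\frac{4(n-1)}{n-2}\Delta_g\varphi + R_g\varphi = n(n-1)\varphi^{\frac{n+2}{n-2}}$ against $\varphi\eta^2$ puts the positive term $n(n-1)\int \varphi^{\frac{2n}{n-2}}\eta^2$ on the \emph{right-hand side} of the resulting identity for $\int\eta^2|\nabla\varphi|^2$, so it cannot be ``discarded'' when seeking an upper bound; dropping it gives an inequality pointing the wrong way. This is precisely why the paper uses the Green function equation, whose inhomogeneous term vanishes away from the pole, and why it tests against $G_i^{\varepsilon}$ with $\varepsilon$ small (testing against $G_i\eta^2$, the analogue of your choice, would produce an exponent of $n$ rather than $\frac{n}{2}$). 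Finally, your annular-cutoff fix for the smallness of $\|R^-_g\|_{L^{n/2}}$ is a reasonable instinct but is not what is needed: the paper instead lets the deleted neighborhood $\mathcal{O}$ of the pole grow, using that the global $L^{n/2}$ integrability makes $\|R^-_g\|_{L^{n/2}(M\setminus\mathcal{O})}$ arbitrarily small, which is the clean mechanism for absorption here.
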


Combining Theorem \ref{Thm:intro-4} and Proposition \ref{Prop:intro-1} we establish

\begin{corollary}\label{Cor:intro-2}
Suppose that $(M^n, g)$ is a complete, noncompact manifold. And suppose that there is a conformal immersion
$$
\Phi: (M^n, g) \to (\mathbb{S}^n, g_{\mathbb{S}}).
$$
Assume that $Ric^-_g \in L^\frac p2(M, g)\cap L^\frac n2(M, g)$ for some $p\in (n, \infty]$. Then $\Phi$ is injective provided that
$n\geq 5$.
\end{corollary}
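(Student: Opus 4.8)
The plan is to combine Theorem~\ref{Thm:intro-4} and Proposition~\ref{Prop:intro-1} directly, with the only real work being to verify that the numerical threshold in Theorem~\ref{Thm:intro-4} is met when $n\geq 5$. First I would fix $p\in(n,\infty]$ with $Ric^-_g\in L^{p/2}(M,g)$; since $M$ is complete, noncompact, and $Ric^-_g$ is in particular in $L^{p/2}$, all hypotheses of Theorem~\ref{Thm:intro-4} are in place except the diameter-type bound $d(M)<\frac{(n-2)^2 p}{(p-2)n}$. Simultaneously, because $Ric^-_g\in L^{n/2}(M,g)$ we have $R^-_g\le (n-1)\,Ric^-_g$ pointwise (the scalar curvature being a trace of Ricci), hence $R^-_g\in L^{n/2}(M,g)$, so Proposition~\ref{Prop:intro-1} applies and gives $d(M)\le \frac n2$. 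It then suffices to show $\frac n2 < \frac{(n-2)^2 p}{(p-2)n}$.

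So the heart of the matter is the elementary inequality $\frac{n}{2} < \frac{(n-2)^2 p}{(p-2)n}$, i.e. $n^2(p-2) < 2p(n-2)^2$. I would treat the right-hand side as a function of $p$: since $n\geq 5$ forces $2(n-2)^2 > n^2$ (indeed $2(n-2)^2-n^2 = n^2-16n+... $ wait, $2(n-2)^2-n^2 = 2n^2-8n+8-n^2 = n^2-8n+8$, which is positive for $n\geq 7$ and one checks $n=5,6$ directly: $25-40+8=-7<0$, so this naive bound fails and the inequality must genuinely use $p>n$). The cleaner route is to observe the quantity $\frac{(p-2)n}{p} = n - \frac{2n}{p}$ is increasing in $p$ and, as $p\downarrow n$, tends to $n-2$; so for all admissible $p$ we have $\frac{(p-2)n}{p} > n-2$, whence $\frac{(n-2)^2 p}{(p-2)n} < \frac{(n-2)^2}{\,\cdot\,}$... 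I would instead bound the threshold from \emph{below}: $\frac{(n-2)^2 p}{(p-2)n} = \frac{(n-2)^2}{n-2n/p} > \frac{(n-2)^2}{n} = n-4+\frac4n$, using $n-2n/p<n$. Thus it is enough that $\frac n2 \le n-4+\frac4n$, i.e. $\frac n2 \ge 4-\frac4n$, i.e. $n^2 \ge 8n-8$, i.e. $n^2-8n+8\ge 0$, which holds for $n\geq 7$; the cases $n=5,6$ I would dispatch by hand, using that $d(M)\le n/2$ is a non-strict bound and checking $\frac n2 < n-4+\frac4n$ fails for $n=5,6$ — so in fact a sharper lower bound for the threshold is needed for those two values, replacing $n-2n/p<n$ by keeping $p$ explicit and using $p>n$ more carefully.

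Consequently the one genuine obstacle is the borderline arithmetic at $n=5$ and $n=6$: there one cannot be wasteful, and must use $\frac{(n-2)^2 p}{(p-2)n}$ with $p>n$ retained, noting $\frac{p}{p-2}>\frac{n}{n-2}$ so the threshold exceeds $\frac{(n-2)^2}{n}\cdot\frac{n}{n-2} = n-2$; then for $n=5$ the threshold is $>3=n-2$ while $d(M)\le 2.5$, and for $n=6$ it is $>4$ while $d(M)\le 3$, so in both cases $d(M) < \frac{(n-2)^2p}{(p-2)n}$ holds strictly. For $n\ge 7$ the coarse estimate above already suffices. Having established the strict inequality in all cases $n\ge 5$, Theorem~\ref{Thm:intro-4} yields that $\Phi$ is injective, completing the proof; the restriction $n\ge 5$ is exactly what is lost when we substitute the weaker $L^{n/2}$ control on $Ric^-$ (via Proposition~\ref{Prop:intro-1}'s bound $d(M)\le n/2$) for a pointwise curvature assumption.
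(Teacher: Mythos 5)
There is a genuine gap, and also an arithmetic error that masks it. The threshold function $q\mapsto \frac{q(n-2)^2}{n(q-2)} = \frac{(n-2)^2}{n}\cdot\frac{q}{q-2}$ is \emph{decreasing} in $q$ (since $\frac{d}{dq}\frac{q}{q-2}=\frac{-2}{(q-2)^2}<0$). Hence $p>n$ gives $\frac{p}{p-2}<\frac{n}{n-2}$, the opposite of what you assert, and the threshold $\frac{(n-2)^2 p}{(p-2)n}$ is \emph{strictly less than} $n-2$, approaching $n-2$ only as $p\downarrow n$ and sinking to $\frac{(n-2)^2}{n}$ as $p\to\infty$. For $n=5,6$ one has $\frac{(n-2)^2}{n}<\frac n2<n-2$, so whether the threshold beats $d(M)\le\frac n2$ depends entirely on $p$: for $n=5$ the condition $\frac{9p}{5(p-2)}>\frac52$ forces $p<\frac{50}{7}$, and for $n=6$ the condition $\frac{8p}{3(p-2)}>3$ forces $p<18$. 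If the hypothesis merely furnishes some $p>n$ (say $p=\infty$), your inequality chain simply fails.

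The missing idea — and the entire content of the paper's one-line proof — is the interpolation of $L^r$ spaces. Since $Ric^-_g\in L^{p/2}(M,g)\cap L^{n/2}(M,g)$, interpolation gives $Ric^-_g\in L^{q/2}(M,g)$ for every $q\in[n,p]$; you are therefore free to replace the given $p$ by any $q$ slightly larger than $n$ when invoking Theorem~\ref{Thm:intro-4}. Because $\lim_{q\to n^+}\frac{q(n-2)^2}{n(q-2)}=n-2$, and $n-2>\frac n2$ precisely when $n\ge 5$, one can choose $q$ close enough to $n$ that $\frac{q(n-2)^2}{n(q-2)}>\frac n2\ge d(M)$, and then Theorem~\ref{Thm:intro-4} applies with exponent $q$. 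Your proof has the right skeleton (Proposition~\ref{Prop:intro-1} to bound $d(M)$, Theorem~\ref{Thm:intro-4} for injectivity, and the observation $n-2>\frac n2\iff n\ge 5$), but without the freedom to shrink the exponent, the argument does not close for $n=5,6$; and the inequality $\frac{p}{p-2}>\frac{n}{n-2}$ you invoke to patch those cases is false.
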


Finally, in the light of Corollary \ref{Cor:intro-1} and Corollary \ref{Cor:intro-2}, we are able to show

\begin{theorem-m}\label{Thm:intro-main}
Suppose that $(M^n, g)$ ($n\geq 5$) is a complete noncompact manifold. And suppose 
that there is a conformal immersion
$$
\Phi: (M^n, g) \to (\mathbb{S}^n, g_{\mathbb{S}}).
$$ 
Assume that either $Ric_g$ is nonnegative outside a compact subset or 
\begin{enumerate}
\item $Ric^{-}_g \in L^{1}(\Omega, g) \cap L^\infty(\Omega, g)$
\item $R_g \in L^{\infty}(\Omega, g) \text{ and } |\nabla^g R_g|\in L^{\infty}(\Omega, g)$. 
\end{enumerate}
Then $\Phi$ is an embedding and $\partial \Phi(M) = \mathbb{S}^n\setminus\Phi(M)$ is a finite point set. 
\end{theorem-m}

On the organization of this paper, we discuss some preliminaries and background in Section \ref{Sec:preli}. We present proofs for 
Theorem \ref{Thm:intro-1} and Theorem \ref{Thm:intro-2} in Section \ref{Sec:domain}. The proof of Theorem \ref{Thm:intro-2} takes most of 
Section \ref{Sec:domain} and is divided into 4 steps. In Section \ref{Sec:injectivity} we follow quite closely from \cite{SY88} and prove
Theorem \ref{Thm:intro-3}, Theorem \ref{Thm:intro-4} and Proposition \ref{Prop:intro-1}. Finally we conclude Main Theorem
at the end of Section \ref{Sec:injectivity}.


\section{Preliminaries and background}\label{Sec:preli}

In this section we collect some basic definitions and facts that are relevant to us in this paper. We first recall the celebrated Huber Theorem
\cite{Hu57}. Then we introduce the n-Laplace equations in conformal geometry and recall our
previous work that extended Arsove-Huber Theorem \cite{AH73} to n-superharmonic functions in general dimensions. We
also recall definitions and basic properties of p-capacity and p-parabolicity for our use later \cite{AM73, SY94}.
 
\subsection{Gaussian curvature equations and Huber's Theorem}\label{Subsec:huber57}

On a surface $(M^2, \bar g)$ with Gaussian curvature $K$, let the Gaussian curvature for a conformal metric 
$$
g = e^{2u}\bar g
$$
be $K_g$. Then 
$$
-\Delta u + K  = K_g \, e^{2u}
$$
is the celebrated Gaussian curvature equation. Recall 

\begin{theorem} \label{Thm:huber1957} (Huber \cite{Hu57}) Suppose that $(M^2, g)$ is a complete open surface and that
$$
\int_M (K^- dvol)_g < \infty,
$$
where $K^-_g$ is the negative part of Gaussian curvature of the metric $g$. Then $M$ is a compact surface with finitely many points
removed and $(M^2, g)$ is parabolic.

\end{theorem}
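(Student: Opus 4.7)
The plan is to combine the uniformization theorem for Riemann surfaces with potential-theoretic estimates on the conformal factor. After passing to the orientation double cover if necessary, I may assume $M$ is orientable, so its conformal structure makes $M$ a Riemann surface whose universal cover is $\mathbb{S}^2$, $\mathbb{C}$, or $\mathbb{D}$. The $\mathbb{S}^2$ case is trivial, so the heart of the argument is to analyze the remaining cases one end at a time, working in a conformal chart where the background metric $\bar g$ is flat and $g = e^{2u}\bar g$. The goal is to establish three things: (a) $M$ has finitely many ends, (b) each end is conformally a punctured disk, and (c) $(M,g)$ is parabolic.

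The analytic engine is the Gaussian curvature equation, which in a flat chart reads $-\Delta u = K_g e^{2u}$ with $\Delta$ the Euclidean Laplacian. The hypothesis $\int_M K^-_g \, dvol_g < \infty$ is exactly $\int K^-_g e^{2u}\, dx < \infty$, so the negative part of $\Delta u$ is a finite measure and $u$ differs from a genuinely subharmonic function by the Newton potential of a finite signed measure. This places $u$ in the class of near-subharmonic functions to which the Arsove-Huber theory applies: the asymptotic growth of $u$ along each end is logarithmically controlled by the Riesz mass. Completeness of $g$ forces $\int e^u$ to diverge along every divergent arc, and together with the logarithmic control this forces $u \to +\infty$ on each end at a prescribed rate that identifies the end conformally with a punctured disk, establishing (b).

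For (a), I would integrate the Gauss-Bonnet formula over an exhausting family $\Omega_j \subset M$ of smooth subdomains whose boundary components are short geodesic circles around each end. By (b) each boundary component contributes $2\pi$ to the boundary geodesic-curvature integral in the limit, while $\int_{\Omega_j} K_g\, dvol_g \geq -\int_M K^-_g\, dvol_g > -\infty$, so the number of ends is bounded by a linear combination of $\chi_{\mathrm{top}}(\Omega_j)$ and the total negative curvature and is therefore finite. Parabolicity in (c) then follows from (a) and (b), since a Riemann surface with finitely many punctured-disk ends admits no nonconstant bounded harmonic function: such a function would extend across each puncture by the removable singularity theorem and would be bounded harmonic on a closed surface, hence constant.

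The main obstacle is step (b), the sharp asymptotic identification of each end: one must translate the mere $L^1$ integrability of $K^-_g$ (relative to $dvol_g$) into an upper and lower logarithmic growth bound on $u$ strong enough to rule out an end conformal to an infinite-modulus annulus and to force the punctured-disk model. This is precisely the content of the Arsove-Huber theorem \cite{AH73} on near-subharmonic functions in the plane, obtained via Newton potential estimates, and it is exactly this potential-theoretic ingredient whose higher-dimensional analogue via Wolff potentials and the $n$-Laplace equation \eqref{Equ:n-laplace-intro} is developed in the present paper.
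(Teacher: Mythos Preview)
The paper does not prove this statement: Theorem~\ref{Thm:huber1957} is recalled in Section~\ref{Subsec:huber57} as a classical result of Huber \cite{Hu57}, introduced with ``Recall'' and serving purely as historical motivation for the paper's higher-dimensional program via the $n$-Laplace equation \eqref{Equ:n-laplace}. There is no proof in the paper to compare your proposal against.

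Your outline is a reasonable modern sketch, but two points are worth flagging. First, a sign slip: from $-\Delta u = K_g e^{2u}$ the hypothesis $\int K_g^- e^{2u}\,dx < \infty$ says that $(\Delta u)^+ = K_g^- e^{2u}$, not $(\Delta u)^-$, is a finite measure, so $u$ is near-\emph{superharmonic} rather than near-subharmonic; this is the correct direction for completeness to force $u \to +\infty$ at each end with a logarithmic lower bound. Second, invoking \cite{AH73} (1973) to prove a 1957 theorem is anachronistic as a reconstruction of Huber's own argument, though it is perfectly valid as an alternative route and indeed mirrors the strategy the present paper pursues in higher dimensions via Theorem~\ref{Thm:MQ18} and Theorem~\ref{Thm:improvement of theorem 1.1}. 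Your Gauss--Bonnet step (a) also presupposes (b) to control the boundary geodesic-curvature terms, so the logical order is really (b) on each end first, then (a); this is consistent with how the higher-dimensional analogue is organized in Section~\ref{Subsec:finite point}.
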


Huber in \cite{Hu57} extended the works of Cohn-Vossen \cite{CV35} and marked the highlight of applications of linear potential 
theory in differential geometry of surfaces then.


\subsection{n-Laplace equations in conformal geometry and n-superharmonic functions}\label{Subsec:n-superharmonic}

Suppose that $(M^n, \bar g)$ is a Riemannian manifold and that $g=e^{2u} \bar g$ is a conformal metric. 
In an attempt to search for Huber's type theorem in higher dimensions in conformal geometry, we recall the formula for the transformation 
of Ricci curvature tensor under conformal change:
$$
 Ric_{ij} - (n-2) u_{i,j}+ (n-2)u_{i}u_{j} - (\Delta u)\bar g_{ij}-(n-2)|\nabla u|^{2} \bar g_{ij} = (Ric_g)_{ij},
$$
where geometric quantities with a subscript $g$ are taken with respect to the metric $g$, while those without a subscription are 
taken with respect to the background
metric $\bar g$. Multiplying $|\nabla u|^{n-4}$ and contracting with $\nabla u$ on both sides, one gets
$$
\aligned
-((n-2) |\nabla u|^{n-4} u_iu_j u_{i, j} & + |\nabla u|^{n-2}\Delta u)  + |\nabla u|^{n-4} Ric (\nabla u, \nabla u) \\
& = |\nabla u|^{n-4}Ric_g (\nabla u, \nabla u)
\endaligned
$$
which is
\begin{equation}\label{Equ:n-laplace}
-  \Delta_n u +  |\nabla u|^{n-2} Ric (\nabla u)  = (|\nabla u|^{n-2} Ric (\nabla u))_g \, e^{nu}
\end{equation}
where $Ric(v)$ is the Ricci curvature in the direction of $v$. The right hand side $(|\nabla u|^{n-2}Ric(\nabla u))_g$ is 
calculated for the conformal metric $g = e^{2u}\bar g$. This n-Laplace equation reduces to Gaussian curvature equation in dimension 2
and therefore is a natural extension of Gaussian curvature equation in higher dimensions. Applications of n-Laplace equations \eqref{Equ:n-laplace}
have been initiated and studied in \cite{BMQ17, MQ18}. By the theory of Wolff potential in relation to n-Laplace equations developed in, for instance, 
\cite{KM94} and references therein, in \cite{MQ18}, we obtained

\begin{theorem} \label{Thm:MQ18} (Ma-Qing \cite[Theorem 1.1]{MQ18})  
Let $w$ be a nonnegative lower semi-continuous function that is $n$-superharmonic in $B(0, 2)\subset \r^n$ and 
\[
-\Delta_n u=\mu\ge 0
\]
for a Radon measure $\mu\ge 0$ and the origin is in the support $\mathcal{S}$ of the Radon measure $\mu$. 
Then there is a set $E\subset\mathbb{R}^n$, which is $n$-thin at the origin, such that
\[
\lim_{x\notin E \text{ and } |x|\rightarrow 0}\frac{u(x)}{\log\frac{1}{|x|}}=\liminf_{|x|\rightarrow0}\frac{u(x)}{\log\frac{1}{|x|}}=m\ge 0
\]
and
\[
u(x)\geq m\log\frac{1}{|x|} - C \ \text{ for $x\in B(0, 1)\setminus \{0\}$ and some $C$}.
\]
Moreover, if $u\in C^2(B(0,2)\setminus \{0\})$ and $(B(0, 2)\setminus \{0\},e^{2u}|dx|^2)$ is complete at the origin, then $m\geq1$. 
\end{theorem}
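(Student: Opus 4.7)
The plan is to reduce the asymptotic analysis of $u$ at the origin to that of the Wolff potential of the Riesz measure $\mu$, and then to invoke fine-topology arguments associated with $n$-capacity.

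First I would invoke the pointwise Wolff-potential estimates of Kilpel\"ainen--Mal\'y for the $n$-Laplacian. Setting
\[
W_\mu(x) := \int_0^1 \mu(B(x,r))^{1/(n-1)} \, \frac{dr}{r},
\]
these estimates take the two-sided form
\[
c_1 W_\mu(x) - C \le u(x) - \inf_{B(0,2)} u \le c_2 W_\mu(x) + C'
\]
on a slightly smaller ball. In the regime $|x|\to 0$, the normalised quotient $u(x)/\log(1/|x|)$ is thus controlled by $W_\mu(x)/\log(1/|x|)$ up to multiplicative and additive constants that become harmless in the limit, which reduces the problem to a potential-theoretic one.

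Next I would analyse $W_\mu$ via the decomposition $\mu = \mu(\{0\})\,\delta_0 + \tilde\mu$ at the origin. Since $\mu(B(x,r)) \ge \mu(\{0\})$ whenever $r > |x|$, the atomic part contributes exactly $\mu(\{0\})^{1/(n-1)} \log(1/|x|) + O(1)$ to $W_\mu$, and subadditivity $(a+b)^{1/(n-1)} \le a^{1/(n-1)} + b^{1/(n-1)}$ gives an upper bound of the same form plus $W_{\tilde\mu}(x)$. This already furnishes the pointwise inequality $u(x) \ge m\log(1/|x|) - C$ with $m$ proportional to $\mu(\{0\})^{1/(n-1)}$. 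For the matching upper asymptotic, I would show that $W_{\tilde\mu}(x)/\log(1/|x|)$ has fine limit zero at the origin: since $\tilde\mu$ carries no mass at $0$, a nonlinear Cartan-type argument, quantified by the Wiener criterion for $n$-thinness, produces a set $E$, $n$-thin at the origin, off which $W_{\tilde\mu}(x) = o(\log(1/|x|))$. Combining the two estimates yields the existence of $m = \liminf_{|x|\to 0} u(x)/\log(1/|x|)$ and the identification of this liminf as a fine limit along $\mathbb{R}^n \setminus E$.

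For the moreover part, suppose $u \in C^2(B(0,2)\setminus\{0\})$ and that $(B(0,2)\setminus\{0\}, e^{2u}|dx|^2)$ is complete at the origin. Using the Wiener characterisation of $n$-thinness of $E$, I would construct a rectifiable arc $\gamma:(0,t_0] \to B(0,1)\setminus\{0\}$ approaching the origin with $|\gamma(t)| \asymp t$ and $\gamma$ eventually avoiding $E$, by an annular decomposition $B(0, r_{k+1}) \setminus B(0, r_k)$ with $r_{k+1}/r_k \le 1/2$ in which capacity estimates supply good bridging points. Along such an arc the fine-limit identity gives $u(\gamma(t)) = (m+o(1))\log(1/t)$, so
\[
\ell_g(\gamma) = \int_0^{t_0} e^{u(\gamma(t))} |\gamma'(t)| \, dt \le C \int_0^{t_0} t^{-m - o(1)} \, dt,
\]
which is finite whenever $m < 1$, contradicting completeness at the origin and forcing $m \ge 1$.

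The hard part, I expect, is the second step: the nonlinear fine-limit theorem showing $W_{\tilde\mu}(x) = o(\log(1/|x|))$ outside a set $n$-thin at $0$. Classical Arsove--Huber relies on the additive superposition principle for the Newton kernel, which is unavailable for the $n$-Laplacian; the replacement must be built directly from Wolff-potential control and quantitative $n$-capacity estimates for sublevel sets of $W_{\tilde\mu}$. Once this nonlinear fine-topology result is in place, both the pointwise lower bound and the curve-avoidance argument in the completeness step follow in a relatively routine fashion.
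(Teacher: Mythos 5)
Your overall framework (Kilpel\"ainen--Mal\'y Wolff-potential bounds plus a nonlinear Cartan/thin-set argument, then a curve avoiding the thin set to exploit completeness) is the right family of tools, and your ``moreover'' step is essentially the argument the paper itself uses for its strengthened version (there a straight ray avoiding $E$ is produced from capacity estimates of radial projections over dyadic annuli). But there is a genuine gap in the central step. The two-sided Wolff estimates have the form $c_1\,W_\mu(x)-C\le u(x)\le c_2\inf_{B}u+c_3\,W_\mu(x)+C'$ with structural constants $c_1,c_3$ depending on $n$ that are \emph{not} equal to $1$, and they multiply the leading $\log\frac1{|x|}$ term. Your decomposition $\mu=\mu(\{0\})\delta_0+\tilde\mu$ therefore only yields the two-sided comparability
$$
c_1\,\mu(\{0\})^{\frac1{n-1}}\;\lesssim\;\liminf_{|x|\to0}\frac{u(x)}{\log\frac1{|x|}}\;\lesssim\;c_3\,\mu(\{0\})^{\frac1{n-1}},
$$
which does not show that $u(x)/\log\frac1{|x|}$ has a limit off an $n$-thin set, does not identify that limit with the liminf $m$, and does not give the pointwise bound $u\ge m\log\frac1{|x|}-C$ with the \emph{same} constant $m$. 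These exact statements are what the theorem asserts; ``up to multiplicative constants that become harmless in the limit'' is precisely where the argument breaks, because the constants are not harmless. Relatedly, your claim that $m$ is proportional to $\mu(\{0\})^{1/(n-1)}$ overstates what is true: the relation established (and used) in this paper is only the one-sided inequality $n\omega_n m^{n-1}\le\mu(\{0\})$ (Lemma \ref{Lem:m=point mass}), and it is proved by a separate divergence-theorem argument on sublevel-type domains, not by Wolff asymptotics.

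To get the sharp constant one has to work with $m$ defined as the liminf from the start, exploiting that $\log\frac1{|x|}$ is $n$-harmonic in the punctured ball (it is the fundamental solution of $\Delta_n$ in $\mathbb{R}^n$): a comparison argument on annuli, with $n$-harmonic barriers of the form $a\log\frac1{|x|}+b$ matched to $\min_{|x|=r_k}u$ on sphere levels where the liminf is nearly attained, gives $u\ge m\log\frac1{|x|}-C$ with exactly $m$; the fine-limit statement is then an upper estimate, relative to this same $m$, obtained from the Wolff potential of the non-atomic part together with quantitative $n$-capacity (Cartan-type) estimates. This is the route of Arsove--Huber and of \cite{MQ18}, and it cannot be shortcut by the raw two-sided Wolff bounds. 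Your completeness step is fine once the fine-limit identity with the correct $m$ is in hand, and it parallels the proof of Theorem \ref{Thm:improvement of theorem 1.1} in the paper.
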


This theorem generalizes Arsove-Huber theorem \cite{AH73}
from 2 dimensions to general dimensions for n-superharmonic functions. For the definitions of n-thinness readers are referred to
\cite{AM72, KM94, MQ18}, for example. In \cite{MQ18}, there are detailed discussions on the differences of the definition in \cite{MQ18} to 
the previous, even though both reduce to the same in 2 dimensions. In this paper, inspired by Huber's theorem (Theorem \ref{Thm:huber1957}), 
we want to study differential geometric properties in large for certain manifolds without assuming Ricci nonnegative. 
Theorem \ref{Thm:MQ18} is one of the important ingredients. 
\\

We also want to recall some result on solving n-Laplace equations when the inhomogeneous term is a nonnegative Radon measure.

\begin{theorem}\label{Thm:KM92} (Kilpelainen-Maly \cite[Theorem 2.4]{KM92})
Let $U$ be a bounded domain in $\mathbb{R}^n$ and $\mu$ be a nonnegative Radon measure on $U$. Then there is a nonnegative
n-superharmonic function $u$ such that
$$
-\Delta_n u = \mu
$$
and $\min\{u, k\}\in H^{1, n}_0(U)$ for all positive number $k$.
\end{theorem}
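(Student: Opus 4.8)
The plan is to obtain $u$ as an a.e.\ limit of solutions of approximate problems with smooth data, in the spirit of the Boccardo--Gallou\"{e}t scheme adapted to the $n$-Laplacian, and then to identify the limit equation by proving a.e.\ convergence of the gradients. First I would regularize the datum: exhausting $U$ by compact subsets and mollifying, I can choose nonnegative $\mu_j\in C_0^\infty(U)$ with $\mu_j\rightharpoonup\mu$ weakly-$*$ and $\sup_j\mu_j(U)<\infty$. For each $j$ the map $v\mapsto-\Delta_n v$ is monotone, coercive and continuous from $H^{1,n}_0(U)$ to its dual, and $\mu_j$ lies in that dual (being bounded with compact support), so the Minty--Browder theorem produces a unique $u_j\in H^{1,n}_0(U)$ with $-\Delta_n u_j=\mu_j$; testing against $\min\{u_j,0\}$ and using $\mu_j\ge0$ forces $u_j\ge0$.

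Next I would record the truncation estimates. Testing the equation for $u_j$ against $\min\{u_j,k\}$ gives
$$
\int_U|\nabla\min\{u_j,k\}|^n\,dx=\int_U\min\{u_j,k\}\,d\mu_j\le k\,\sup_j\mu_j(U),
$$
so $\{\min\{u_j,k\}\}_j$ is bounded in $H^{1,n}_0(U)$ for each fixed $k$; interpolating among these truncations bounds $\{u_j\}$ in a Marcinkiewicz space and yields $|\{u_j>k\}|\to0$ uniformly in $j$ as $k\to\infty$. Applying Rellich to the truncations together with a diagonal argument, $\{u_j\}$ is Cauchy in measure, so after passing to a subsequence $u_j\to u$ a.e.\ for a finite measurable $u\ge0$ with $\min\{u,k\}\in H^{1,n}_0(U)$ and $\min\{u_j,k\}\rightharpoonup\min\{u,k\}$ weakly in $H^{1,n}_0(U)$; this already secures the stated membership $\min\{u,k\}\in H^{1,n}_0(U)$.

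The heart of the argument is to upgrade this to $\nabla u_j\to\nabla u$ a.e.\ in $U$. I would test the equations for $u_j$ and $u_m$ against the truncated difference $\min\{s,\max\{-s,u_j-u_m\}\}$ and use strict monotonicity of the $n$-Laplacian to bound the nonnegative quantity $\int_{\{|u_j-u_m|\le s\}}\big(|\nabla u_j|^{n-2}\nabla u_j-|\nabla u_m|^{n-2}\nabla u_m\big)\cdot(\nabla u_j-\nabla u_m)\,dx$ by data terms of the form $\int(\min\{s,\max\{-s,u_j-u_m\}\})\,d(\mu_j+\mu_m)$; these data terms are controlled by $s\,\sup_j\mu_j(U)$ and, for the mollified approximations, tend to $0$ as $j,m\to\infty$. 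Combined with $u_j\to u$ a.e., this yields the desired a.e.\ convergence of gradients via the Boccardo--Murat argument. I expect this to be the main obstacle: since $\mu$ is merely a measure one cannot test directly with $u_j-u$, and it is the truncation at level $s$ that makes the data terms harmless while the uniform bound from the previous step keeps the nonlinear gradient terms under control.

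Finally I would pass to the limit. A.e.\ convergence of $\nabla u_j$ together with equi-integrability of $\{|\nabla u_j|^{n-1}\}$ (the Marcinkiewicz bound places $\nabla u_j$ in $L^q(U)$ for every $q<n$) gives $|\nabla u_j|^{n-2}\nabla u_j\to|\nabla u|^{n-2}\nabla u$ in $L^1_{\mathrm{loc}}(U)$, whence for every $\varphi\in C_0^\infty(U)$
$$
\int_U|\nabla u|^{n-2}\nabla u\cdot\nabla\varphi\,dx=\lim_{j\to\infty}\int_U|\nabla u_j|^{n-2}\nabla u_j\cdot\nabla\varphi\,dx=\lim_{j\to\infty}\int_U\varphi\,d\mu_j=\int_U\varphi\,d\mu,
$$
i.e.\ $-\Delta_n u=\mu$ distributionally. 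Each $u_j$ is a nonnegative $n$-superharmonic function, so replacing $u$ by its lower semicontinuous regularization produces an $n$-superharmonic representative, using the standard stability of $n$-superharmonicity under a.e.\ limits of locally bounded-below supersolutions followed by lsc-regularization; together with the truncation membership recorded above, this completes the proof. Alternatively one could construct $u$ directly by a Perron or balayage method over $n$-superharmonic functions dominating the Wolff potential of $\mu$, bypassing the approximation, but the scheme above seems the most transparent.
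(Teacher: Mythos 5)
The paper does not prove this result at all; it is quoted verbatim from Kilpel\"ainen--Mal\'y \cite[Theorem 2.4]{KM92}, so there is no internal proof to compare against. Your reconstruction is a correct proof, but it follows the Boccardo--Gallou\"et/Boccardo--Murat approximation scheme rather than the route taken in [KM92]. Kilpel\"ainen and Mal\'y arrange the approximating measures $\mu_j$ to be \emph{monotonically increasing} (e.g.\ by restricting $\mu$ to an exhaustion by compacts and/or truncating its density), so that the variational solutions $u_j$ increase by the comparison principle; the pointwise limit $u=\lim u_j$ is then automatically $n$-superharmonic unless it is $\equiv+\infty$, which the truncation bound rules out, and no separate a.e.\ gradient-convergence step is required. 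Your scheme mollifies without arranging monotonicity and pays for it by needing Boccardo--Murat a.e.\ convergence of gradients and a stability lemma for $n$-superharmonicity under a.e.\ limits followed by lsc-regularization. Both ingredients are standard, and the trade-off is reasonable: the monotone route is cleaner on the potential-theoretic side, while yours makes the identification of the limiting PDE more transparent and generalizes more readily to signed data. Two small points you should make explicit: (i) the statement tacitly assumes $\mu(U)<\infty$---your hypothesis $\sup_j\mu_j(U)<\infty$ is where this enters, and without it the truncation bound $\int_U|\nabla\min\{u,k\}|^n\le k\,\mu(U)$ is vacuous; (ii) when you invoke stability of $n$-superharmonicity under a.e.\ limits, the cleanest path is to show $\min\{u,k\}$ is a weak supersolution in $H^{1,n}_0(U)$ for each $k$ (passing to the limit using a.e.\ gradient convergence together with the uniform $H^{1,n}$ bound on $\min\{u_j,k\}$) and then cite the characterization of $n$-superharmonic functions via their truncations, rather than a limit theorem for superharmonic functions directly, since $u$ itself is only in $W^{1,q}_{\mathrm{loc}}$ for $q<n$.
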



\subsection{p-capacity and p-parabolicity}\label{Subsec:capacity-parabolicity}

In this subsection we review materials about p-capacity and p-parabolicity. First, let us recall the definition of p-capacity.

\begin{definition}\label{Def:p-capacity} For a compact subset $K$ in a domain $\Omega$ in a Riemannian manifold $(M^n, g)$, 
we define, for $1<p  < \infty$,
$$
cap_p(K,  \Omega) = \inf \{\int_\Omega |\nabla u|^p dvol: u\in C^\infty_0(\Omega) \text{ and $u\geq 1$ on $K$}\}.
$$
\end{definition}

The following relation between the Hausdorff measure and the p-capacity is well known. 
Readers are referred to \cite{AM73} and references therein. 

\begin{lemma}\label{Lem:n-cap-Hausdorff} (\cite[Theorem 2.10, Chapter VI]{SY94}) Let $K$ be a compact subset of the Euclidean space 
$\mathbb{R}^n$, and $1 < q \leq n$. We know 
\begin{enumerate}
\item if $H_{n-q}(K) < \infty$ then $cap_q(K) = 0$;
\item $if H_{n-q +\epsilon}(K) > 0$ for some $\epsilon > 0$ then $cap_q(K) > 0$.
\end{enumerate}
\end{lemma}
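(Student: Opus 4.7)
This is a classical result in nonlinear potential theory (cf. Adams--Hedberg, Heinonen--Kilpel\"ainen--Martio), and I would prove the two parts by complementary upper and lower capacity estimates.

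For part (1), the plan is to construct admissible test functions from Hausdorff covers. Given $\delta>0$, use $H_{n-q}(K)<\infty$ to cover $K$ by balls $B(x_j,r_j)$ with $r_j<\delta$ and $\sum_j r_j^{n-q}$ near the $\delta$-Hausdorff content of $K$. On each ball take the explicit radial capacitary potential: for $1<q<n$ the Riesz-type cutoff
\[
\phi_j(x)=\min\{1,\,a_j|x-x_j|^{(q-n)/(q-1)}-b_j\},
\]
and for $q=n$ the logarithmic analogue
\[
\phi_j(x)=\min\{1,\,\log(R/|x-x_j|)/\log(R/r_j)\},
\]
each normalized so that $\phi_j\equiv 1$ on $B(x_j,r_j)$ and vanishes outside $B(x_j,R)$. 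A direct computation gives $\int|\nabla\phi_j|^q\leq Cr_j^{n-q}$ in the subcritical case and $\leq C(\log(R/r_j))^{-(n-1)}$ in the borderline case. The competitor $u:=\max_j\phi_j$ is admissible for $cap_q(K)$ and yields $cap_q(K)\leq\sum_j\int|\nabla\phi_j|^q$, which collapses to zero as $\delta\to 0$ using the hypothesis, and (when $q<n$) a Besicovitch-type decomposition of $K$ into pieces of arbitrarily small Hausdorff content.

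For part (2), I would invoke Frostman's lemma to produce, from $H_{n-q+\epsilon}(K)>0$, a nonzero Radon measure $\mu$ supported on $K$ with growth $\mu(B(x,r))\leq Cr^{n-q+\epsilon}$. This guarantees the uniform boundedness of the Wolff potential
\[
W_{1,q}^{\mu}(x)=\int_0^1\left(\frac{\mu(B(x,r))}{r^{n-q}}\right)^{\frac{1}{q-1}}\frac{dr}{r}.
\]
The Hedberg--Wolff inequality then gives, for any admissible $u\in C_c^\infty$ with $u\geq 1$ on $K$,
\[
\mu(K)\leq\int u\,d\mu\leq C\,\|W_{1,q}^{\mu}\|_\infty^{1/q'}\left(\int|\nabla u|^q\right)^{1/q},
\]
from which the lower bound $cap_q(K)\geq\bigl(\mu(K)/C\|W_{1,q}^{\mu}\|_\infty^{1/q'}\bigr)^q>0$ follows at once.

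The principal obstacle in part (1) for $1<q<n$ is that the naive covering estimate only shows $cap_q(K)\lesssim H_{n-q}(K)$, which need not vanish; the passage to $cap_q(K)=0$ uses the finer fact that any set of finite $(n-q)$-Hausdorff measure can be decomposed into pieces of arbitrarily small $(n-q)$-content, together with the countable subadditivity of $q$-capacity. In the borderline case $q=n$, which is the only case actually invoked later in the paper, this complication evaporates since the ball capacities already carry the vanishing prefactor $(\log(R/r_j))^{-(n-1)}$ automatically as $r_j\to 0$. For part (2), the substantive input is the Hedberg--Wolff inequality itself, whose nonlinear quasi-additivity of Wolff potentials is a nontrivial piece of nonlinear potential theory.
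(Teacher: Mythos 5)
The paper does not actually prove this lemma; it quotes it from \cite[Theorem 2.10, Chapter VI]{SY94} (see also \cite{AM73}), so any complete argument you give is necessarily ``a different route.'' Your part (2) is essentially the standard and correct one: Frostman's lemma gives a measure $\mu$ on $K$ with $\mu(B(x,r))\le Cr^{n-q+\epsilon}$, the truncated Wolff potential $W^{\mu}_{1,q}$ is then uniformly bounded, and the duality $\int u\,d\mu\le C\|\nabla u\|_{L^q}\|I_1\mu\|_{L^{q'}}$ together with the Hedberg--Wolff inequality $\|I_1\mu\|_{L^{q'}}^{q'}\simeq\int W^{\mu}_{1,q}\,d\mu$ yields positivity of $cap_q(K)$. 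Two small repairs: your displayed inequality is missing the factor $\mu(K)^{1/q'}$ coming from $\int W^{\mu}_{1,q}\,d\mu\le\|W^{\mu}_{1,q}\|_\infty\mu(K)$ (the correct conclusion is $cap_q(K)\ge c\,\mu(K)\,\|W^{\mu}_{1,q}\|_\infty^{-(q-1)}$), and at the borderline $q=n$ you should work with the condenser capacity $cap_n(K,\Omega)$ of the paper and the correspondingly truncated potentials, since $I_1\mu\notin L^{n'}(\mathbb{R}^n)$ globally.

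Part (1), however, has a genuine gap for $1<q<n$, and you have half-noticed it yourself. The covering construction can only ever prove $cap_q(K)\le C\,H^{n-q}_\infty(K)$, i.e.\ finiteness, and the proposed repair --- split $K$ into pieces of small $(n-q)$-content and use countable subadditivity --- does not work: if $K=\bigcup_i K_i$ with each $H^{n-q}_\infty(K_i)<\varepsilon$, subadditivity gives $cap_q(K)\le\sum_i cap_q(K_i)\le C\sum_i H^{n-q}_\infty(K_i)$, and this sum is again of the order of $H_{n-q}(K)$, not small; making $\varepsilon$ smaller only increases the number of pieces. No covering/content argument can close this, because the true content of (1) is that a set of positive $q$-capacity carries non-$\sigma$-finite $H_{n-q}$ measure. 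The classical proof is dual to your part (2): assume $cap_q(K)>0$, take the capacitary (equilibrium) measure $\mu$ of $K$, whose Wolff potential $W^{\mu}_{1,q}$ is bounded on $\operatorname{supp}\mu$; the convergence of $\int_0^1\bigl(\mu(B(x,r))/r^{n-q}\bigr)^{1/(q-1)}\,dr/r$ forces the $(n-q)$-density of $\mu$ to vanish at every point, and the standard density comparison between $\mu$ and $H_{n-q}$ then gives $\mu(K)\le C\cdot 0\cdot H_{n-q}(K)=0$, contradicting $\mu(K)>0$ (this is the argument in Adams--Hedberg \cite{AH99} and in Heinonen--Kilpel\"ainen--Martio). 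You are right that the only case the paper actually invokes is $q=n$, where (1) is trivial ($H_0(K)<\infty$ means $K$ is finite and points have zero $n$-capacity) and where your logarithmic test functions do carry a vanishing prefactor; but as a proof of the lemma as stated for all $1<q\le n$, part (1) needs the potential-theoretic argument, not the covering one.
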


For our purpose, we adopt the following definition of p-parabolicity for noncompact manifolds. 

\begin{definition}\label{Def:p-parabolicity}
Suppose that $(M^n, g)$ is a complete noncompact Riemannian manifold. $(M^n, g)$ is said to be p-parabolic if
$cap_p(\bar U, M^n) = 0$ for any bounded domain $U\subset M$.
\end{definition}

This definition of p-parabolicity is known to be equivalent to the more conventional definition, that is, a complete noncompact manifold
$(M^n, g)$ is p-parabolic if there is no positive Green function for p-Laplacian $\Delta_p$ on $M$. Readers are referred to \cite{Hol90,
Tro99} and \cite[page 13]{CHS00}. From Definition \ref{Def:p-parabolicity}, 
it is easily seen that the p-parabolicity is quasi-isometrically invariant, that is, if $C^{-1}g_{2}\le 
g_{1}\le Cg_{2}$, then $(M,g_{1})$ is p-parabolic if and only if $(M,g_{2})$ is. It turns out that $n$-parabilicity is actually conformally
invariant, since n-capacity is conformally invariant. It is well known that volume growth is closely related to p-parabolicity. We recall, for instance, 
the following reference \cite[Corollary 3.2]{CHS00}. 

\begin{lemma}\label{Lem:volume-parabolicity} (Coulhon-Holopainen-Saloff-Coste \cite[Corollary 3.2]{CHS00})
Suppose that $(M^n, g)$ is a complete manifold. for $1 < p < \infty$, assume that, for a point $x_0\in M$,  
$$
\liminf_{r\to\infty} r^{-p} Vol(B(x_0, r))  = 0 \text{ or } Vol(B(x_0, r)\leq Cr^p(\log r)^{p-1}.
$$
Then $(M^n, g)$ is p-parabolic.
\end{lemma}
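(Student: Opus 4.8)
The plan is to verify $p$-parabolicity directly from Definition \ref{Def:p-parabolicity}. Since $p$-capacity is monotone in the compact set, any bounded domain $U$ with $\bar U\subset B(x_0,R_0)$ satisfies $cap_p(\bar U,M)\le cap_p(\bar B(x_0,R_0),M)$, so it suffices to fix $x_0\in M$ and prove $cap_p(\bar B(x_0,R_0),M)=0$ for all large $R_0$. Write $\rho(x)=d(x_0,x)$ and $V(r)=Vol(B(x_0,r))$. I would look for radial competitors $\phi=f\circ\rho$, where $f\colon[0,\infty)\to[0,1]$ is nonincreasing, Lipschitz, $\equiv1$ on $[0,R_0]$ and compactly supported: then $\phi$ is Lipschitz with support in a ball (which is compact by completeness), hence admissible in Definition \ref{Def:p-capacity} after a standard mollification with negligible loss of $p$-energy. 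The game is then to produce such $f$ with $\int_M|\nabla\phi|^p\,dvol$ arbitrarily small.

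In the case $\liminf_{r\to\infty}r^{-p}V(r)=0$, I would pick $r_j\to\infty$ with $V(r_j)\le\epsilon_jr_j^p$, $\epsilon_j\to0$, and take $f_j$ dropping affinely from $1$ at $\rho=R_0$ to $0$ at $\rho=r_j$. On that annulus $|\nabla\phi_j|\le(r_j-R_0)^{-1}$, so by monotonicity of volume
\[
\int_M|\nabla\phi_j|^p\,dvol\le\frac{V(r_j)}{(r_j-R_0)^p}\le\frac{\epsilon_jr_j^p}{(r_j-R_0)^p}\longrightarrow0\quad(j\to\infty),
\]
whence $cap_p(\bar B(x_0,R_0),M)=0$ for every $R_0$, and $(M^n,g)$ is $p$-parabolic.

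The case $V(r)\le Cr^p(\log r)^{p-1}$ for $r\ge R_0$ (enlarging $R_0$ if necessary) is where the work lies; a single logarithmic cutoff on one annulus keeps the energy bounded but not small, so I would distribute the descent over dyadic shells and optimize. For $K\ge1$ let $f_K\equiv1$ on $[0,R_0]$, vanish on $[2^KR_0,\infty)$, and be affine in $\log\rho$ on each shell $A_k=\{2^kR_0\le\rho\le2^{k+1}R_0\}$, $0\le k\le K-1$, taking values $1=a_0\ge a_1\ge\cdots\ge a_K=0$ to be chosen. On $A_k$ one has $|\nabla\phi_K|\le(a_k-a_{k+1})/(\rho\log2)$, so by volume monotonicity and the hypothesis
\[
\int_{A_k}|\nabla\phi_K|^p\,dvol\le\Big(\frac{a_k-a_{k+1}}{\log2}\Big)^p\frac{V(2^{k+1}R_0)}{(2^kR_0)^p}\le C'\,(a_k-a_{k+1})^p\,w_k,\qquad w_k=\big(\log(2^{k+1}R_0)\big)^{p-1},
\]
with $C'$ depending only on $p$, $C$, $R_0$. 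Summing in $k$ and minimizing $\sum_k(a_k-a_{k+1})^pw_k$ under $a_k-a_{k+1}\ge0$, $\sum_k(a_k-a_{k+1})=1$ (the minimizer is $a_k-a_{k+1}\propto w_k^{-1/(p-1)}$) gives
\[
cap_p(\bar B(x_0,R_0),M)\le C'\Big(\sum_{k=0}^{K-1}w_k^{-1/(p-1)}\Big)^{-(p-1)}.
\]

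To finish, since $w_k^{-1/(p-1)}\ge\big((k+1)\log(2R_0)\big)^{-1}$ one has $\sum_{k=0}^{K-1}w_k^{-1/(p-1)}\ge c\log K$ with $c>0$, which tends to $\infty$ as $K\to\infty$, so the right-hand side tends to $0$; hence $cap_p(\bar B(x_0,R_0),M)=0$ for every $R_0$, and $(M^n,g)$ is $p$-parabolic. The main obstacle is exactly this second case: realizing that no finite range of scales suffices, and that the descent must be spread over infinitely many dyadic shells with weights tuned so that the total $p$-energy decays like $(\log K)^{-(p-1)}$ in the number $K$ of scales — which is precisely the divergence of $\int^{\infty}(r/V(r))^{1/(p-1)}\,dr$ converted into a vanishing family of cutoffs, the mechanism behind \cite[Corollary 3.2]{CHS00}.
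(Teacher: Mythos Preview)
Your proof is correct. Note, however, that the paper does not supply its own proof of this lemma: it is quoted verbatim as \cite[Corollary 3.2]{CHS00} and used as a black box, so there is no argument in the paper to compare against. What you have written is essentially the standard self-contained derivation of that result---radial Lipschitz competitors $f\circ\rho$, a single linear cutoff for the $\liminf r^{-p}V(r)=0$ case, and the dyadic-shell optimization (equivalently, the divergence of $\int^\infty (r/V(r))^{1/(p-1)}\,dr$) for the borderline $V(r)\le Cr^p(\log r)^{p-1}$ case. The details you give (the H\"older/Lagrange minimization yielding $(\sum_k w_k^{-1/(p-1)})^{-(p-1)}$ and the harmonic-series lower bound for the sum) are all sound; the only cosmetic points are that the mollification step and the a.e.\ bound $|\nabla\rho|\le 1$ are invoked without detail, but both are routine.
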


We also want to mention a volume growth estimate from integral lower bound of Ricci curvature.

\begin{lemma}\label{Lem:gallot}(Gallot \cite[Theorem 1]{Gal88}) Suppose that $(M^n, g)$ is a complete, noncompact manifold with infinite volume.
For $p\in (n, \infty]$, assume $Ric^-_g\in L^\frac p2(M^n, g)$. Then, for any point $x_0\in M^n$, 
\begin{equation}\label{Equ:gallot}
\lim_{r\to\infty} \frac {Vol(B(x_0, r))^\frac 1p}r = 0.
\end{equation}
\end{lemma}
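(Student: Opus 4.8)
The assertion is equivalent to $Vol(B(x_0,R))=o(R^{p})$ as $R\to\infty$; the case $p=\infty$ is immediate, so assume $p<\infty$, and observe that by hypothesis $Vol(M,g)=\infty$, so $Vol(B(x_0,R))\to\infty$. The plan is comparison geometry under the $L^{p/2}$ control of $Ric^{-}_g$: a pointwise Riccati estimate for geodesic spheres, upgraded (following Gallot and Petersen--Wei) to an intrinsic integral bound, and then to a relative volume comparison from which $o(R^{p})$ growth drops out. First I would fix the distance $r$ to $x_0$ and, in geodesic polar coordinates, write $dvol_g=\mathcal A(r,\theta)\,dr\,d\theta$ (with $\mathcal A$ extended by $0$ past the cut locus) and $H=\partial_r\log\mathcal A=\Delta_g r$ (in the barrier sense beyond the cut locus). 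The Bochner--Riccati inequality gives $\partial_r H+\frac{H^{2}}{n-1}\le Ric^{-}_g(\partial_r)$ with $H\sim\frac{n-1}{r}$ near $x_0$; writing $\psi=\big(H-\frac{n-1}{r}\big)_{+}$ this becomes $\partial_r\psi+\frac{2}{r}\psi+\frac{\psi^{2}}{n-1}\le Ric^{-}_g(\partial_r)$, and discarding the nonnegative quadratic term and integrating $\partial_r(r^{2}\psi)\le r^{2}Ric^{-}_g(\partial_r)$ from the origin gives the pointwise bound
$$
\psi(r,\theta)\ \le\ \frac{1}{r^{2}}\int_{0}^{r}s^{2}\,Ric^{-}_g(\gamma_\theta(s))\,ds .
$$

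Next I would upgrade this to an \emph{intrinsic} estimate: testing the differential inequality for $\psi$ against suitable powers of $\psi$ times a cutoff equal to $1$ on $B(x_0,R)$ and supported in $B(x_0,2R)$, integrating over $M$ against $dvol_g$, integrating by parts in the radial direction (the cut locus only helps, since $\psi$ jumps downward there), and absorbing the curvature term by Young's inequality, one obtains — this is exactly where $p/2>n/2$ is needed — a scale-invariant Kato-type bound on $\psi$, and from it the Petersen--Wei relative volume comparison: for all $0<\rho<R$,
$$
\left(\frac{Vol(B(x_0,R))}{c_n R^{n}}\right)^{1/p}\le\left(\frac{Vol(B(x_0,\rho))}{c_n \rho^{n}}\right)^{1/p}+C(n,p)\,R\left(\frac{1}{Vol(B(x_0,R))}\int_{M}\big(Ric^{-}_g\big)^{p/2}\,dvol_g\right)^{1/p},
$$
where $c_n$ is the volume of the Euclidean unit $n$-ball. (Passing from the density inequality $\partial_r\log\frac{\mathcal A}{r^{n-1}}=H-\frac{n-1}{r}\le\psi$ to a statement about $Vol(B(x_0,r))$ is the other half of the Petersen--Wei argument, involving a further Hölder estimate in $r$.)

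To conclude, set $\Lambda=\int_{M}(Ric^{-}_g)^{p/2}\,dvol_g<\infty$ and $a_R=\big(Vol(B(x_0,R))/c_nR^{n}\big)^{1/p}>0$, so $Vol(B(x_0,R))=c_nR^{n}a_R^{\,p}$. For fixed $\rho$, the comparison reads $a_R\le a_\rho+C'\Lambda^{1/p}R^{\,1-n/p}\big/a_R$, i.e. $a_R^{2}-a_\rho a_R-C'\Lambda^{1/p}R^{\,1-n/p}\le0$, hence $a_R\le a_\rho+\sqrt{C'\Lambda^{1/p}}\,R^{(1-n/p)/2}$. Therefore $Vol(B(x_0,R))=O\big(R^{(n+p)/2}\big)$, and since $p>n$,
$$
\frac{Vol(B(x_0,R))^{1/p}}{R}=O\big(R^{(n-p)/(2p)}\big)\ \longrightarrow\ 0 ,
$$
which is the claim. (Infinite volume is used here, via $Vol(B(x_0,R))\to\infty$, to keep the curvature factor $R\big(\Lambda/Vol(B(x_0,R))\big)^{1/p}$ in the comparison controlled at large scales.)

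The main obstacle is the upgrade in the second paragraph. The pointwise Riccati bound evaluates $Ric^{-}_g$ along inner geodesic segments, whereas the Riemannian measure weights the outer geodesic spheres, so one cannot simply integrate the pointwise estimate against $dvol_g$; producing the intrinsic, scale-invariant $L^{p/2}$ control of $\psi$ — and thus the relative volume comparison — is the technical heart of the proof. It rests on the divergence identity on $M$, on the favourable signs of the $\frac{2}{r}\psi$ and quadratic terms combined with the downward jumps of $\psi$ at the cut locus, and crucially on the strict inequality $p>n$, which is also precisely what makes the exponent $(n+p)/2$ above strictly less than $p$ and hence forces the decay.
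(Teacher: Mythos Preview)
The paper does not give its own proof of this lemma: it is stated with attribution to Gallot \cite[Theorem~1]{Gal88} and used as a black box, so there is no in-paper argument to compare against.

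Your sketch is sound and follows the Petersen--Wei route rather than Gallot's original isoperimetric profile argument. The displayed relative volume comparison is essentially Petersen--Wei's theorem (with $q=p/2>n/2$, model curvature $0$), and once it is granted your algebraic extraction of the growth rate is correct: from
\[
a_R \le a_\rho + C'\Lambda^{1/p} R^{1-n/p}/a_R
\]
the quadratic bound gives $a_R \le a_\rho + C'' R^{(1-n/p)/2}$, hence $Vol(B(x_0,R))=O(R^{(n+p)/2})=o(R^p)$ since $p>n$. The one place to be careful is the ``upgrade'' paragraph: the intrinsic $L^{p/2}$ bound on $\psi$ and the resulting comparison are not trivial to derive from scratch, and in practice one either cites Petersen--Wei directly or reproduces their ODE/integration-by-parts argument in full. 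Gallot's own proof proceeds differently, via an isoperimetric inequality under the integral Ricci bound, but both approaches yield the same volume growth conclusion.
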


Recently the critical case when $p = n$ has been resolved with the additional and appropriate assumption in \cite[Theorem 2.1]{Car20}. 
Consequently from Lemma \ref{Lem:gallot}, we observe 

\begin{lemma}\label{Lem:ricci-parabolicity}Suppose that $(M^n, g)$ is a complete, noncompact manifold.
Assume $Ric^-_g\in L^\frac p2(M^n, g)$ for some $p\in (n, \infty]$.  Then $(M^n, g)$ is p-parabolic
\end{lemma}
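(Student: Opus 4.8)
The plan is to deduce Lemma~\ref{Lem:ricci-parabolicity} by feeding Gallot's volume growth estimate (Lemma~\ref{Lem:gallot}) into the volume test for $p$-parabolicity (Lemma~\ref{Lem:volume-parabolicity}), after first disposing of the trivial finite-volume case, which Gallot's theorem does not cover. Fix a base point $x_0\in M$. By Lemma~\ref{Lem:volume-parabolicity}, it suffices to show
\[
\liminf_{r\to\infty}\, r^{-p}\,Vol(B(x_0,r))=0 .
\]

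If $Vol(M,g)<\infty$, then trivially $r^{-p}\,Vol(B(x_0,r))\le r^{-p}\,Vol(M,g)\to 0$ as $r\to\infty$, so the above holds and $(M^n,g)$ is $p$-parabolic by Lemma~\ref{Lem:volume-parabolicity}. If instead $Vol(M,g)=\infty$, then for $p\in(n,\infty)$ all hypotheses of Lemma~\ref{Lem:gallot} are satisfied, giving $\lim_{r\to\infty}Vol(B(x_0,r))^{1/p}/r=0$; raising this to the $p$-th power yields $\lim_{r\to\infty} r^{-p}\,Vol(B(x_0,r))=0$, and Lemma~\ref{Lem:volume-parabolicity} again applies. (Here $p$ is finite; the endpoint $p=\infty$ is immaterial to the applications that follow, where the $L^\infty$ hypotheses always come with a companion $L^q$-bound for finite $q$.)

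There is essentially no genuine obstacle: the argument is just the combination of the two recalled lemmas. The only point requiring a moment's attention is that Gallot's estimate presupposes infinite total volume, which is exactly why the finite-volume alternative has to be settled separately — but in that regime boundedness of the volume makes the required $\liminf$ condition obvious.
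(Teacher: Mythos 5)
Your proof is correct and is precisely the argument the paper intends: Lemma~\ref{Lem:ricci-parabolicity} is presented as an immediate consequence of combining Gallot's volume growth estimate (Lemma~\ref{Lem:gallot}) with the volume criterion for $p$-parabolicity (Lemma~\ref{Lem:volume-parabolicity}), with no further proof given. Your explicit handling of the finite-volume case, which Lemma~\ref{Lem:gallot} excludes by hypothesis, fills in a small gap the paper silently passes over.
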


It turns out that n-parabolicity is out of reach in this approach, even though n-parabolicity is more favorable in conformal geometry. 


\section{Finite point compactifications for domains in $\mathbb{S}^n$}\label{Sec:domain}

In this section we discuss extensions of Huber's type theorem in general dimensions using n-Laplace equations \eqref{Equ:n-laplace}. First we derive a 
Huber's type theorem in the line of Carron-Herzlich \cite{CH02}. We will study examples to compare our Theorem \ref{Thm:theorem-A} to 
\cite[Theorem 2.1]{CH02}. To derive a finite point compactification we focus our attention to domains in the round
sphere $\mathbb{S}^n$ based on our earlier work \cite{MQ18}.  
 
 
\subsection{Domains in a compact manifold} \label{Subsec:domains}
In this subsection we state and prove a Huber's type theorem for domains in a given compact Riemannian manifold. We will study examples where one 
could not improve Theorem \ref{Thm:theorem-A} as it is stated. Theorem \ref{Thm:theorem-A} and Corollary \ref{Cor:CHY-geometric} should be 
compared with \cite[Corollary 2.2]{CH02}. Instead of the volume growth assumptions, we will use the integral lower bound on Ricci curvature. 
For this purpose, we let $Ric^-$ be the negative part of the smallest Ricci curvature.  

\begin{theorem}\label{Thm:theorem-A}
For $n\geq 3$, let $\Omega$ be a domain in a compact manifold $(M^n, \bar g)$ and endowed with a conformal metric 
$g = e^{2u}\bar g$. Assume 
\begin{equation*}
\lim_{x\to\partial\Omega}u(x)=+\infty \text{ and } \left\{
\aligned
 & \text{either } Ric^{-}_g \in L^{\frac{n}{2}}(\Omega,g),\\
 & \text{or } Ric^{-}_g |\nabla u|^{n-2}e^{2u} \in L^{1}(\Omega, \bar g).
\endaligned\right.
\end{equation*}
Then $cap_{n}(\partial\Omega,D)=0$, where $D$ is a neighborhood
of $\partial\Omega$ in $M$. Hence $(\Omega, g)$ is $n$-parabolic and $\partial\Omega$
is of Hausdorff dimension 0 in $(M^n, \bar g)$. 
\end{theorem}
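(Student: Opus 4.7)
The plan is to bound $cap_n(\partial\Omega,D)$ directly by testing the $n$-Laplace equation \eqref{Equ:n-laplace-intro} against a cutoff built from $u$. Since $M$ is compact and $u\to+\infty$ at $\partial\Omega$, every super-level set $\{u\ge k\}$ is a compact subset of $M$ containing $\partial\Omega$, and it lies inside $D$ for $k$ large enough. Taking
$$\phi_{k,L}=\min(1,(u-k)^+/L)$$
as test function (it equals $1$ on the neighborhood $\{u\ge k+L\}$ of $\partial\Omega$ and is supported in $\{u\ge k\}\subset D$), the definition of the $n$-capacity gives
$$cap_n(\partial\Omega,D)\le \int_M|\nabla\phi_{k,L}|^n\,dvol_{\bar g}=\frac{1}{L^n}\int_{\{k<u<k+L\}}|\nabla u|^n\,dvol_{\bar g},$$
so it suffices to show that the last integral grows at most linearly in $L$ for some fixed large $k$.

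To control this integral I would rewrite \eqref{Equ:n-laplace-intro} in the equivalent form
$$-\Delta_n u=|\nabla u|^{n-4}_{\bar g}\,(Ric_g-Ric_{\bar g})(\nabla u,\nabla u),$$
which is immediate from the conformal change formula used to derive \eqref{Equ:n-laplace-intro} in Section \ref{Subsec:n-superharmonic}. By the coarea formula,
$$\int_{\{k<u<k+L\}}|\nabla u|^n\,dvol_{\bar g}=\int_k^{k+L}A_n(t)\,dt,\qquad A_n(t):=\int_{\{u=t\}}|\nabla u|^{n-1}\,dS_{\bar g},$$
and the divergence theorem on the annulus $\{k\le u\le t\}$ gives
$$A_n(t)-A_n(k)=-\int_{\{k\le u\le t\}}|\nabla u|^{n-4}_{\bar g}(Ric_g-Ric_{\bar g})(\nabla u,\nabla u)\,dvol_{\bar g}.$$
Combining the pointwise inequality $Ric_g(\nabla u,\nabla u)\ge -Ric_g^-|\nabla u|^2_g$, the compactness estimate $|Ric_{\bar g}|\le C$, and the conformal identity $|\nabla u|^{n-4}_{\bar g}|\nabla u|^2_g\,dvol_{\bar g}=e^{-4u}|\nabla u|^{n-2}_g\,dvol_g$ then yields
$$|A_n(t)-A_n(k)|\le C\int_{\{u\ge k\}}|\nabla u|^{n-2}_{\bar g}\,dvol_{\bar g}+\int_{\{u\ge k\}}Ric_g^-|\nabla u|^{n-2}_g e^{-4u}\,dvol_g.$$

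In Case 2 the second integrand is dominated by $Ric_g^-|\nabla u|^{n-2}_{\bar g}e^{2u}$, which is in $L^1(\Omega,\bar g)$ by hypothesis, so its tail over $\{u\ge k\}$ vanishes as $k\to\infty$; in Case 1 the H\"older inequality with exponents $n/2$ and $n/(n-2)$ combined with a bootstrap controlling the weighted $n$-energy $\int e^{-\alpha u}|\nabla u|^n\,dvol_{\bar g}$ achieves the same vanishing. A parallel H\"older argument controls the first integrand. Hence $A_n(t)$ stays uniformly bounded in $t$, so $\int_{\{k<u<k+L\}}|\nabla u|^n\le CL$ and $cap_n(\partial\Omega,D)\le CL^{1-n}\to 0$ as $L\to\infty$. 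The $n$-parabolicity of $(\Omega,g)$ is then immediate from Definition \ref{Def:p-parabolicity} and the conformal invariance of $cap_n$, while the vanishing Hausdorff dimension of $\partial\Omega$ follows from Lemma \ref{Lem:n-cap-Hausdorff}(2).

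The main obstacle is that the hypothesis controls only $Ric_g^-$ and not the positive part of the Ricci curvature of $g$: a naive bound by $|Ric_g|$ would require a control we do not have. Passing to the difference form $-\Delta_n u=|\nabla u|^{n-4}(Ric_g-Ric_{\bar g})(\nabla u,\nabla u)$ is essential because the one-sided pointwise bound $Ric_g(\nabla u,\nabla u)\ge -Ric_g^-|\nabla u|^2_g$ suffices for the one-sided integral bound on $A_n(t)-A_n(k)$ required by the capacity argument. A secondary difficulty in Case 1 is securing the weighted $n$-energy bound used in H\"older; this I would handle by testing \eqref{Equ:n-laplace-intro} against $\psi=e^{-\beta u}$ (through an approximating cutoff) for a suitable $\beta$ and exploiting the resulting self-improving inequality together with the decay of $e^{-\beta u}$ at $\partial\Omega$.
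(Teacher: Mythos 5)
Your reduction coincides with the paper's: $\phi_{k,L}=\min(1,(u-k)^{+}/L)$ is exactly the paper's competitor $u_{\alpha,\beta}/\beta$, so everything hinges on showing $\int_{\{k<u<k+L\}}|\nabla u|^{n}dvol_{\bar g}=o(L^{n})$, and your identification of the $Ric_g^{-}$ error with the hypothesized quantity in Case 2 is in substance right (though note $|\nabla u|_g^{2}=e^{2u}|\nabla u|_{\bar g}^{2}$ for the $\bar g$-gradient, so the error term is precisely $\int Ric_g^{-}|\nabla u|_{\bar g}^{n-2}e^{2u}dvol_{\bar g}$, not your $e^{-4u}$-weighted expression; also only the one-sided inequality for $A_n(t)-A_n(k)$ is available since $Ric_g^{+}$ is uncontrolled, which, as you say, suffices).

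The genuine gap is the assertion that the error integrals are finite and hence that $A_n(t)$ is uniformly bounded. The background term $C\int_{\{u\ge k\}}|\nabla u|_{\bar g}^{n-2}dvol_{\bar g}$ is not known to be finite at this stage: any H\"older bound converts it into $\bigl(\int_{\{u\ge k\}}|\nabla u|^{n}\bigr)^{(n-2)/n}$ times a volume factor, and $\int_{\{u\ge k\}}|\nabla u|^{n}$ is exactly the quantity you are trying to control -- indeed it is infinite already in the model cylindrical end $u\sim\log\frac{1}{|x|}$ near a puncture. The $L^{q}$ ($q<n$) gradient integrability is proved in the paper only later (Lemma \ref{Lem:L^{n-1}}), and that proof invokes Theorem \ref{Thm:theorem-A}, so using it here would be circular. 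The same defect appears in Case 1: H\"older with exponents $n/2$ and $n/(n-2)$ pairs $\|Ric_g^{-}\|_{L^{n/2}(g)}$ with $\bigl(\int_{\{k\le u\le t\}}|\nabla u|_{\bar g}^{n}\bigr)^{(n-2)/n}$ again, and the proposed weighted energy $\int e^{-\alpha u}|\nabla u|^{n}$ never arises naturally nor controls what is needed. The correct mechanism is absorption rather than a priori boundedness: the paper tests the equation against the truncation itself, so the dangerous factors live where $\nabla u=\nabla u_{\alpha,\beta}$ and are absorbed into $\int|\nabla u_{\alpha,\beta}|^{n}$ by Young's inequality, giving $cap_n\lesssim \beta^{1-n}+\beta^{-n/2}$. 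In your flux language the analogue is to keep the errors over $\{k\le u\le t\}$ only, obtaining a Gronwall-type inequality $A_n(t)\le A_n(k)+C\bigl(\int_{k}^{t}A_n(s)\,ds\bigr)^{(n-2)/n}+C$, whose integration yields $\int_{k}^{k+L}A_n\lesssim L+L^{n/2}$; that is still $o(L^{n})$ and closes the argument, but the step ``hence $A_n(t)$ stays uniformly bounded'' is unjustified as written and your sketch does not supply the needed absorption.
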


\proof The proof is based on a rather straightforward estimate of n-capacity when test functions are chosen right. 
To construct appropriate test functions, for $\alpha$ large and fixed and $\beta$ large, we consider the cut-off 
$$
u_{\alpha, \beta} = \left\{\aligned \beta &\text{ when $u-\alpha \geq \beta$}\\
u-\alpha &\text{ when $u-\alpha <\beta$}\endaligned\right.
\quad\text{ and } \quad \phi = u_{\alpha, \beta} - \beta\eta
$$ 
where $\eta\in C^\infty_0 (\Sigma_\alpha)$ is a fixed usual cut-off that is equal to one in a neighborhood of $\partial\Omega$
and $\Sigma_\lambda = \{x\in \Omega: u(x) > \lambda\}$ is the level set of the conformal factor function $u$. It is easily seen that
$$
u_{\alpha,\beta} \in (0, \beta) \text{ in $\Sigma_\alpha$ and }  \phi  = 0 \text{ on } \left\{\aligned & \{x\in \Omega: u= \alpha\} \text{ or} \\
                                                                   & \{x\in \Omega: u \geq \alpha +\beta\} \endaligned\right.
$$ 
for $\beta$ large. We multiple $\phi$ to both sides of \eqref{Equ:n-laplace} and integrate 
$$
\int_{\Sigma_\alpha} |\nabla u|^{n-2} \nabla u \cdot\nabla\phi + \int_{\Sigma_\alpha} |\nabla u|^{n-2}Ric(\nabla u)\phi 
=  \int_{\Sigma_\alpha} |\nabla u|^{n-2} Ric_g (\nabla u)e^{2u} \phi.
$$ 
Therefore 
$$
\aligned 
 \int_{\Sigma_\alpha} |\nabla u_{\alpha, \beta}|^n  & \leq \beta \int_{\Sigma_\alpha}|\nabla u|^{n-2} \nabla u\cdot \nabla\eta
-  \int_{\Sigma_\alpha}|\nabla u|^{n-2} Ric(\nabla u) \phi \\
& \quad + \beta \int_{\Sigma_\alpha}|\nabla u|^{n-2} Ric_g(\nabla u)e^{2u} (1-\eta) \\
 & \quad  + \int_{\Sigma_\alpha} |\nabla u|^{n-2} Ric_g(\nabla u) e^{2u} (u_{\alpha, \beta}- \beta).
\endaligned
$$
Because 
$$
\phi = u_{\alpha, \beta} - \beta \eta = u_{\alpha, \beta} - \beta + \beta (1-\eta).
$$
Notice that $\eta$ is independent of $\beta$, which implies the terms except the last term in the right hand side is bounded 
by $C\beta$. In the mean time, the last term in the right
\begin{equation}\label{Equ:bad-term}
 \int_{\Sigma_\alpha} |\nabla u|^{n-2} Ric_g(\nabla u) e^{2u} (u_{\alpha, \beta}- \beta) \leq \beta 
 \int_{\Sigma_\alpha} Ric^-_g |\nabla u|^{n-2} e^{2u} dvol \leq C\beta
\end{equation}
if we use the assumption $Ric^-_g|\nabla u|^{n-2}e^{2u} \in L^1(\Omega, \bar g)$. Otherwise, we use the alternative assumption
and apply H\"{o}Lder inequality in \eqref{Equ:bad-term}
$$
\aligned
\int_{\Sigma_\alpha} |\nabla u|^{n-2} Ric_g(\nabla u) e^{2u} (u_{\alpha, \beta}- \beta) & \leq 
 C\beta (\int_{\Sigma_\alpha} |Ric^-_g|^\frac n2 e^{nu})^\frac 2n (\int_{\Sigma_\alpha} |\nabla u_{\alpha,\beta}|^n)^{1 - \frac 2n}\\
 & \leq \frac 12 \int_{\Sigma_\alpha}|\nabla u_{\alpha,\beta}|^n + C \beta^\frac n2 \int_{\Sigma_\alpha} (|Ric^-|^n\, dvol)_g
\endaligned
 $$
Thus, in both cases, 
$$
\int_{\Sigma_\alpha} |\nabla (\frac {u_{\alpha, \beta}}\beta)|^n \leq C \beta^{1-n} + C\beta^{-\frac n2} \text{ as $\beta\to\infty$}
$$
which complete the proof of this theorem.
\endproof

Next we state and prove an improvement of  \cite[Proposition 8.1]{CHY04}.

\begin{lemma}\label{Lem:CYH-improved}
Suppose that $(M^n, \bar g)$ is a compact manifold and that $\Omega$ is a domain in $M$. Let $g = e^{2w} \bar g$ be a complete
conformal metric on $\Omega$. Assume that $R^-_g\in L^p(\Omega, g)$ for some $p> \frac n2$. Then
\begin{equation}\label{Equ:CHY-improved}
w(x) \to \infty \text{ as $x\to\partial\Omega$}.
\end{equation}
\end{lemma}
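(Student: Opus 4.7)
The plan is to argue by contradiction. Suppose there exist $p\in\partial\Omega$ and a sequence $\{x_{k}\}\subset\Omega$ with $x_{k}\to p$ and $w(x_{k})\le C$. The strategy is to produce a uniform-in-$k$ upper bound $w\le M$ on a $g$-ball $B_g(x_{k},\rho/2)$ of fixed $g$-radius $\rho/2$, and then to derive a contradiction with the completeness of $(\Omega,g)$. Indeed, as soon as $k$ is large enough that $d_{\bar g}(x_{k},\partial\Omega)<\rho e^{-M}/2$, the open-closed argument applied to a minimizing $\bar g$-geodesic from $x_{k}$ to $\partial\Omega$ keeps the curve inside $B_g(x_{k},\rho/2)$ (where $g\le e^{2M}\bar g$), which gives $d_g(x_{k},\partial\Omega)\le e^{M}d_{\bar g}(x_{k},\partial\Omega)<\rho/2$, contradicting $d_g(x_{k},\partial\Omega)=\infty$.

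To produce the uniform bound, I would work with the reciprocal Yamabe factor $\tilde v=e^{-(n-2)w/2}$. Since $\tilde v$ parametrizes the reverse conformal change $\bar g=\tilde v^{4/(n-2)}g$, it satisfies
\[
-\tfrac{4(n-1)}{n-2}\Delta_{g}\tilde v + R_{g}\,\tilde v \,=\, R_{\bar g}\,\tilde v^{\frac{n+2}{n-2}} \quad\text{on }\Omega,
\]
and the pointwise bound $w(x_{k})\le C$ translates into $\tilde v(x_{k})\ge c':=e^{-(n-2)C/2}>0$. Splitting $R_{g}=R_{g}^{+}-R_{g}^{-}$, discarding the favorable term $R_{g}^{+}\tilde v\ge 0$, and bounding $R_{\bar g}\le R_{\bar g}^{+}$, one obtains the subsolution inequality
\[
-\tfrac{4(n-1)}{n-2}\Delta_g \tilde v \,\le\, R_g^{-}\,\tilde v + R_{\bar g}^{+}\,\tilde v^{\frac{n+2}{n-2}}.
\]
Both coefficients sit in favorable spaces: $R_g^{-}\in L^{p}(\Omega,g)$ with $p>n/2$, precisely Moser's critical exponent, and $R_{\bar g}^{+}$ is bounded on $M$ (compact) hence in every $L^{q}(g)$ on $g$-balls. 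Crucially, the base integral is automatically controlled via
\[
\int_{B_g(x_{k},\rho)} \tilde v^{\frac{2n}{n-2}}\,dvol_g \,=\, \int_{B_g(x_{k},\rho)} dvol_{\bar g} \,\le\, \mathrm{vol}_{\bar g}(M),
\]
so the $L^{2n/(n-2)}(g)$-norm of $\tilde v$ on any $g$-ball is bounded by a universal constant.

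The standard critical-exponent Moser iteration for subsolutions of $-\Delta_g\tilde v\le A\tilde v+B\tilde v^{(n+2)/(n-2)}$ with $A\in L^p(g)$, $B\in L^\infty$, $p>n/2$, and $\tilde v\in L^{2n/(n-2)}(g)$ bounded then yields a uniform $L^\infty$-bound $\tilde v\le C''$ on $B_g(x_{k},\rho/2)$ for some fixed $\rho>0$, with $C''$ independent of $k$. Combined with the pointwise lower bound $\tilde v(x_{k})\ge c'>0$ and a weak Harnack for the full signed equation for $\tilde v$ (again using the $L^p$ control of $R_g^-$), this delivers a uniform positive lower bound $\tilde v\ge c'''$ throughout $B_g(x_{k},\rho/2)$, equivalently $w\le M:=-\tfrac{2\log c'''}{n-2}$, which is exactly the input required by the first paragraph. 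The main obstacle is the critical Sobolev exponent $(n+2)/(n-2)$ in the subsolution inequality: the Moser bootstrap is borderline there and closes only because the strict inequality $p>n/2$, together with absolute continuity of the $L^p(\Omega,g)$-integral on small $g$-balls, produces the smallness of $\|R_g^{-}\|_{L^{p}(g)}$ needed to absorb the critical nonlinearity against a merely bounded (not small) $L^{2n/(n-2)}$-base.
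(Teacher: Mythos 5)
Your substitution $\tilde v=e^{-(n-2)w/2}$ and the resulting PDE are exactly the paper's starting point, and your completeness endgame is sound, but the analytic core has two genuine gaps. First, the Moser iteration you run on the moving balls $B_g(x_k,\rho)$ implicitly requires a Sobolev inequality on these balls with a constant independent of $k$, and nothing in the hypotheses supplies one: $g$ is an arbitrary complete conformal metric with only $R_g^-\in L^p$, so the local Sobolev constant of unit $g$-balls may degenerate as $x_k\to\partial\Omega$, and the plain Sobolev inequality is not conformally invariant, so it cannot be borrowed from the compact background. This is precisely the crux the paper's proof spends its effort on: it keeps the conformal Laplacian structure (rather than discarding the term $c(n)R_g^+\tilde v$, which is needed to form the conformally invariant Dirichlet energy) and proves the Yamabe--Sobolev inequality \eqref{Equ:Yamabe-Sobolev} on a small neighborhood $D$ of $\partial\Omega$, after a preliminary conformal normalization of $\bar g$ (using the Green function of the conformal Laplacian in the positive Yamabe case, or $R_{\bar g}\le0$ in the nonpositive case) which at the same time removes the critical term $R_{\bar g}\tilde v^{\frac{n+2}{n-2}}$ and leaves the linear inequality $-\Delta_g \tilde v+c(n)R_g^+\tilde v\le c(n)R_g^-\tilde v$. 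By dropping $R_g^+$ and working with the flat Dirichlet energy you forfeit the only available source of a uniform Sobolev constant. (Your accounting of which term needs smallness is also backwards: the $R_g^-\tilde v$ term is subcritical since $p>\frac n2$ and needs no smallness, while the critical coefficient $R_{\bar g}^+\tilde v^{4/(n-2)}$ lies only in $L^{n/2}(g)$, with norm equal to a $\bar g$-volume.)

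Second, the step upgrading $\tilde v(x_k)\ge c'$ to $\tilde v\ge c'''$ on the ball via a ``weak Harnack using the $L^p$ control of $R_g^-$'' fails: Harnack-type lower bounds require $\tilde v$ to be a supersolution with potential in $L^p$, $p>\frac n2$, and from the supersolution side the potential is $c(n)\bigl(R_g^+ + R_{\bar g}^-\tilde v^{4/(n-2)}\bigr)$, on which there is no hypothesis at all ($R_g^+$ is completely uncontrolled). The $L^p$ bound on $R_g^-$ only yields subsolution (upper) bounds for $\tilde v$, i.e.\ lower bounds for $w$, which is the wrong direction. The paper, following \cite[Proposition 8.1]{CHY04}, avoids any lower-bound step: the iteration gives $\sup_{B_g(x,\rho/2)}\tilde v\le C\bigl(\int_{B_g(x,\rho)}\tilde v^{\frac{2n}{n-2}}dvol_g\bigr)^{\frac{n-2}{2n}}=C\,Vol_{\bar g}(B_g(x,\rho))^{\frac{n-2}{2n}}$, and completeness forces these balls to escape every compact subset of $\Omega$ while $Vol_{\bar g}(\Omega)<\infty$, so the right-hand side tends to $0$ as $x\to\partial\Omega$. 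In other words, the smallness of the $L^{2n/(n-2)}$-base that you explicitly decline to use is exactly what drives the conclusion $\tilde v\to0$, i.e.\ $w\to+\infty$, with no Harnack inequality needed.
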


\begin{proof}
The proof of \eqref{Equ:CHY-improved} is inspired by the Moser's iteration argument of \cite[Proposition 8.1]{CHY04} applied to the scalar curvature 
equation
\begin{equation}\label{Equ:scalar curvature}
-\Delta_g u + \frac {n-2}{4(n-1)}R^+_g u = \frac {n-2}{4(n-1)}R^-_g u +  \frac {n-2}{4(n-1)}R_{\bar g} u^\frac {n+2}{n-2}
\end{equation}
for $u = e^{- \frac {n-2}2 w}$. To get ride of the nonlinear term in the right hand side, without the loss of generality, we may assume 
\begin{enumerate}
\item $R_{\bar g} \leq 0$ when $(M^n, \bar g)$ is of nonpositive Yamabe type;
\item $R_{\bar g} = 0$ in an open domain $D$ that includes $\partial\Omega$ in $M$ when $(M^n, \bar g)$ is of positive Yamabe type.
\end{enumerate}
(2) in the above can be achieved by using the Green function of the conformal Laplacian $-\Delta + \frac {n-2}{4(n-1)}R$ with a pole outside of 
the domain $D$. Now \eqref{Equ:scalar curvature} becomes
\begin{equation}\label{Equ:moser}
-\Delta_g u + \frac {n-2}{4(n-1)}R^+_g u \leq \frac {n-2}{4(n-1)}R^-_g u. 
\end{equation}
To apply the Moser's iteration argument, we need the Sobolev inequality. In fact, from
\cite[Proposition 8.1]{CHY04}, what we really need is the Yamabe-Sobolev \eqref{Equ:Yamabe-Sobolev} in below. 
After we have the Yamabe-Sobolev \eqref{Equ:Yamabe-Sobolev},  the difference from 
\cite[Proposition 8.1]{CHY04} is that we have $R^{-}_g \in L^{p}(\Omega, g)$ instead of  $R_g$ bounded from below. For this, one simply needs 
to observe that $R^{-}_g \in L^{p}(\Omega, g)$ for $\frac n2 < p\leq \infty$ is enough for Moser's iteration in \eqref{Equ:moser} (cf.  
(8.5) of \cite[Proposition 8.1]{CHY04}).
\\

Therefore this lemma follows from the following Yamabe-Soboev
\begin{equation}\label{Equ:Yamabe-Sobolev}
(\int v^\frac {2n}{n-2}dvol_g)^\frac {n-2}n \leq C (D) \int (|\nabla v|^2 + \frac {n-2}{4(n-1)} R_g v^2)dvol_g \text{ for all $v\in 
C^\infty_c(D\setminus\partial\Omega)$}
\end{equation}
for some open neighborhood $D$ of $\partial\Omega$ in $M$. To prove \eqref{Equ:Yamabe-Sobolev} when $(M^n,, \bar g)$ is of nonpositive 
Yamabe type, i. e. $R_{\bar g}\leq 0$ on $M$ (cf. (1) in the above), without the loss of generality, for the compact manifold $(M^n, \bar g)$, we may assume
\begin{itemize}
\item $R_{\bar g} \geq - c_1$ on $M$ for some nonnegative constant $c_1$;
\item the Sobolev inequality 
\begin{equation}\label{Equ:Sobolev-g bar}
(\int v^\frac {2n}{n-2}dvol_{\bar g})^\frac {n-2}n \leq S (M^n, \bar g) \int (|\nabla v|^2 dvol)_{\bar g} \text{ for all $v\in C^\infty_c(M)$}.
\end{equation}
\end{itemize}
By the definition of Yamabe constant, it is equivalent to prove, for the metric $\bar g$, the following Yamabe-Sobolev holds
\begin{equation}\label{Equ:Y-S-g bar}
(\int v^\frac {2n}{n-2}dvol_{\bar g})^\frac {n-2}n \leq C_D\int ((|\nabla v|^2 + \frac {n-2}{4(n-1)} R v^2)dvol)_{\bar g} \text{ for all 
$v\in C^\infty_c(D\setminus\partial\Omega)$}
\end{equation}
for some open neighborhood $D$ of $\partial\Omega$ in $M$. To do so, one only needs to control
$$
\aligned
-\frac {n-2}{4(n-1)}\int R_{\bar g} v^2 dvol_{\bar g}  & \leq c_1 \frac {n-2}{4(n-1)} \int v^2 dvol_{\bar g} \\
& \leq c_1 \frac {n-2}{4(n-1)} (\int v^\frac {2n}{n-2} dvol_{\bar g})^\frac {n-2}n Vol(D\setminus\partial\Omega, \bar g)^\frac 2n \\
& \leq c_1 \frac {n-2}{4(n-1)} Vol(D\setminus\partial\Omega, \bar g)^\frac 2n S(M, \bar g)\int (|\nabla v|^2dvol)_{\bar g},
\endaligned
$$
which implies \eqref{Equ:Y-S-g bar} from \eqref{Equ:Sobolev-g bar}, provided that
\begin{equation}\label{Equ:volume}
Vol(D\setminus\partial\Omega, \bar g)^\frac 2n < \frac {2(n-1)}{c_1 (n-2)S(M, \bar g)}.
\end{equation}
Because 
$$
\aligned
\int (|\nabla v|^2 dvol)_{\bar g} & =  \int ((|\nabla v|^2 + \frac {n-2}{4(n-1)} R v^2)dvol)_{\bar g} -\frac {n-2}{4(n-1)}\int R_{\bar g} v^2 dvol_{\bar g} \\
&\leq  \int ((|\nabla v|^2 + \frac {n-2}{4(n-1)} R v^2)dvol)_{\bar g} + \frac 12 \int (|\nabla v|^2 dvol)_{\bar g}.
\endaligned
$$
Notice that \eqref{Equ:volume} holds for some domain $D$ regardless of the Lebesgue measure of $\partial\Omega$ is positive or not. 
\\

On the other hand, when $(M^n,, \bar g)$ is of positive Yamabe type, i. e. $R_{\bar g} = 0$ in $D\subset M$ (cf. (2) in the above), 
\eqref{Equ:Y-S-g bar} is the same as \eqref{Equ:Sobolev-g bar} and implies \eqref{Equ:Yamabe-Sobolev}.
\end{proof}

\begin{corollary}\label{Cor:CHY-geometric}
For $n\geq 3$, let $\Omega$ be a domain in a compact manifold $(M^n, \bar g)$ and endowed with a complete conformal metric 
$g = e^{2u}\bar g$. Assume 
Assume that,  
\begin{equation*}
R^-_g \in L^p(\Omega, g) \text{ for some $p\in(\frac{n}{2},+\infty]$ and } \left\{
\aligned
 & \text{either } Ric^{-}_g \in L^{\frac{n}{2}}(\Omega, g),\\
 & \text{or } Ric^{-}_g |\nabla u|^{n-2}e^{2u} \in L^{1}(\Omega, \bar g).
\endaligned\right.
\end{equation*}
where $R^-_g$ is the negative part of the scalar curvature $R$. Then $cap_{n}(\partial\Omega,D)=0$, where $D$ is a neighborhood
of $\partial\Omega$ in $M^n$. Hence $(\Omega, g)$ is $n$-parabolic and $\partial\Omega$
is of Hausdorff dimension 0 in $(M^n, \bar g)$. 
\end{corollary}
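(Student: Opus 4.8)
The plan is to obtain this statement by combining Lemma~\ref{Lem:CYH-improved} with Theorem~\ref{Thm:theorem-A}. Theorem~\ref{Thm:theorem-A} already delivers all three conclusions --- $cap_n(\partial\Omega,D)=0$, $n$-parabolicity of $(\Omega,g)$, and $\partial\Omega$ of Hausdorff dimension $0$ in $(M^n,\bar g)$ --- but under the hypothesis $\lim_{x\to\partial\Omega}u(x)=+\infty$ in place of $R^-_g\in L^p(\Omega,g)$. Lemma~\ref{Lem:CYH-improved} is precisely the device that converts the scalar-curvature integrability, together with the completeness of $g$, into this boundary blow-up.

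Concretely, I would first apply Lemma~\ref{Lem:CYH-improved} with $w=u$: since $g=e^{2u}\bar g$ is a complete conformal metric on $\Omega$ and $R^-_g\in L^p(\Omega,g)$ for some $p\in(\frac n2,\infty]$, the lemma gives $u(x)\to+\infty$ as $x\to\partial\Omega$. The remaining standing hypothesis --- either $Ric^-_g\in L^{\frac n2}(\Omega,g)$ or $Ric^-_g|\nabla u|^{n-2}e^{2u}\in L^1(\Omega,\bar g)$ --- is word for word the Ricci alternative of Theorem~\ref{Thm:theorem-A}. Hence every hypothesis of Theorem~\ref{Thm:theorem-A} is met by $(\Omega,g)$, and that theorem at once yields $cap_n(\partial\Omega,D)=0$ for a neighborhood $D$ of $\partial\Omega$ in $M^n$, $n$-parabolicity of $(\Omega,g)$, and $\partial\Omega$ of Hausdorff dimension $0$ in $(M^n,\bar g)$ --- the last two being obtained in Theorem~\ref{Thm:theorem-A} from the capacity vanishing via the conformal invariance of the $n$-capacity, the equivalence of Definition~\ref{Def:p-parabolicity} with the nonexistence of a positive Green function for $\Delta_n$ (see \cite{Hol90,Tro99,CHS00}), and the capacity--Hausdorff comparison Lemma~\ref{Lem:n-cap-Hausdorff} applied in local charts about the compact set $\partial\Omega$. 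That is the entire argument.

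There is no real obstacle here, since all the analytic content is already contained in Lemma~\ref{Lem:CYH-improved} (a Moser iteration on the conformally transformed scalar curvature equation, needing only completeness and $R^-_g\in L^p$ with $p>\frac n2$) and in Theorem~\ref{Thm:theorem-A} (the $n$-capacity estimate for the cut-offs $u_{\alpha,\beta}/\beta$). The one point worth a line of checking is logical compatibility: Lemma~\ref{Lem:CYH-improved} uses \emph{only} the completeness of $g$ and the scalar-curvature bound --- not the Ricci condition --- so it composes with Theorem~\ref{Thm:theorem-A} without circularity, and the neighborhood $D$ of $\partial\Omega$ can be taken the same in both, since the blow-up $u\to+\infty$ at $\partial\Omega$ automatically makes every super-level set $\Sigma_\alpha=\{x\in\Omega:u(x)>\alpha\}$ a punctured neighborhood of $\partial\Omega$ in $M^n$.
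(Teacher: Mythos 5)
Your proposal is correct and is exactly the argument the paper intends: Corollary~\ref{Cor:CHY-geometric} is stated immediately after Lemma~\ref{Lem:CYH-improved} precisely so that it follows by feeding the boundary blow-up $u\to+\infty$ produced by that lemma (from completeness and $R^-_g\in L^p$, $p>\tfrac n2$) into Theorem~\ref{Thm:theorem-A}, whose Ricci hypothesis is identical. Your compatibility remark is sound --- Lemma~\ref{Lem:CYH-improved} uses only the scalar curvature and completeness, not the Ricci alternative --- so the composition is immediate.
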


In the following we give an example to illustrate that it is impossible to push for $\partial\Omega$ to be a finite point set in 
Theorem \ref{Thm:theorem-A}, more precisely, not under the assumption $Ric^-_g\in L^\frac n2(\Omega, g)$ (also see a similar example 
\cite[Theorem 2.7]{Car20}). The idea of such 
a construction comes from the paper of Gallot \cite{Gal88}. We refer the readers to Appendix A1 and A2 of \cite{Gal88} for details. 
We are constructing a complete conformal metric $g = e^{2u}g_{\mathbb{S}}$ on a domain $\Omega\subset\mathbb{S}^n$ such that
\[
\int_{\Omega} |Ric^{-}_g|^{n/2}dvol_g <+\infty
\] 
but $\partial\Omega$ consists of infinite many points. For an easy visual presentation, we consider $\Omega = \Phi(M)$ for a conformal embedding
$$
\Phi: M \to \mathbb{S}^n.
$$
We first construct the manifold $(M^n, g)$ and then the conformal embedding will be easily seen. Let 
\[
C=[-1,1]\times\mathbb{S}^{n-1},g_{\alpha,\varepsilon,\eta}=dt^{2}+b^{2}(t)g_{\mathbb{S}^{n-1}},
\]
where $b(t)=\eta(t^{2}+\varepsilon^{2})^{\frac{\alpha}{2}}.$ For
$k\ge1$, we choose
\[
\alpha=\alpha_{k}=1+k\cdot e^{-k},\varepsilon=\varepsilon_{k}=e^{-e^{k}},\eta=\eta_{k}=\frac{1}{\alpha_{k}}(1+\varepsilon_{k}^{2})^{\frac{2-\alpha_{k}}{2}}.
\]
We denote $g_{\alpha_{k},\varepsilon_{k},\eta_{k}}$ by $g_{k}$ and
$(C,g_{k})$ by $C_{k}$. The manifold $M$ is obtained by gluing
countable many $B_k$, which is the Euclidean space with a ball removed, to $A$, which is the Euclidean space with countably many balls removed, 
using $C_k$ to form a complete noncompact manifold. The following figure indicates what is $M$ and what is the image $\Phi(M)$ in $\mathbb{S}^n$.
From an argument similar to that in \cite[A2]{Gal88} we have
\[
\int_{M}|Ric^{-}_g|^{\frac{n}{2}}dvol_g\le C\sum_{k}[(k\cdot e^{-k})^{\frac{n-2}{2}}+e^{-(n-1)k}]<+\infty.
\]
We therefore conclude that $\Omega=\mathbb{S}^{n}\backslash\{{\rm countable\,\,many\,\,points}\}.$
\begin{center}
\includegraphics[scale=0.43]{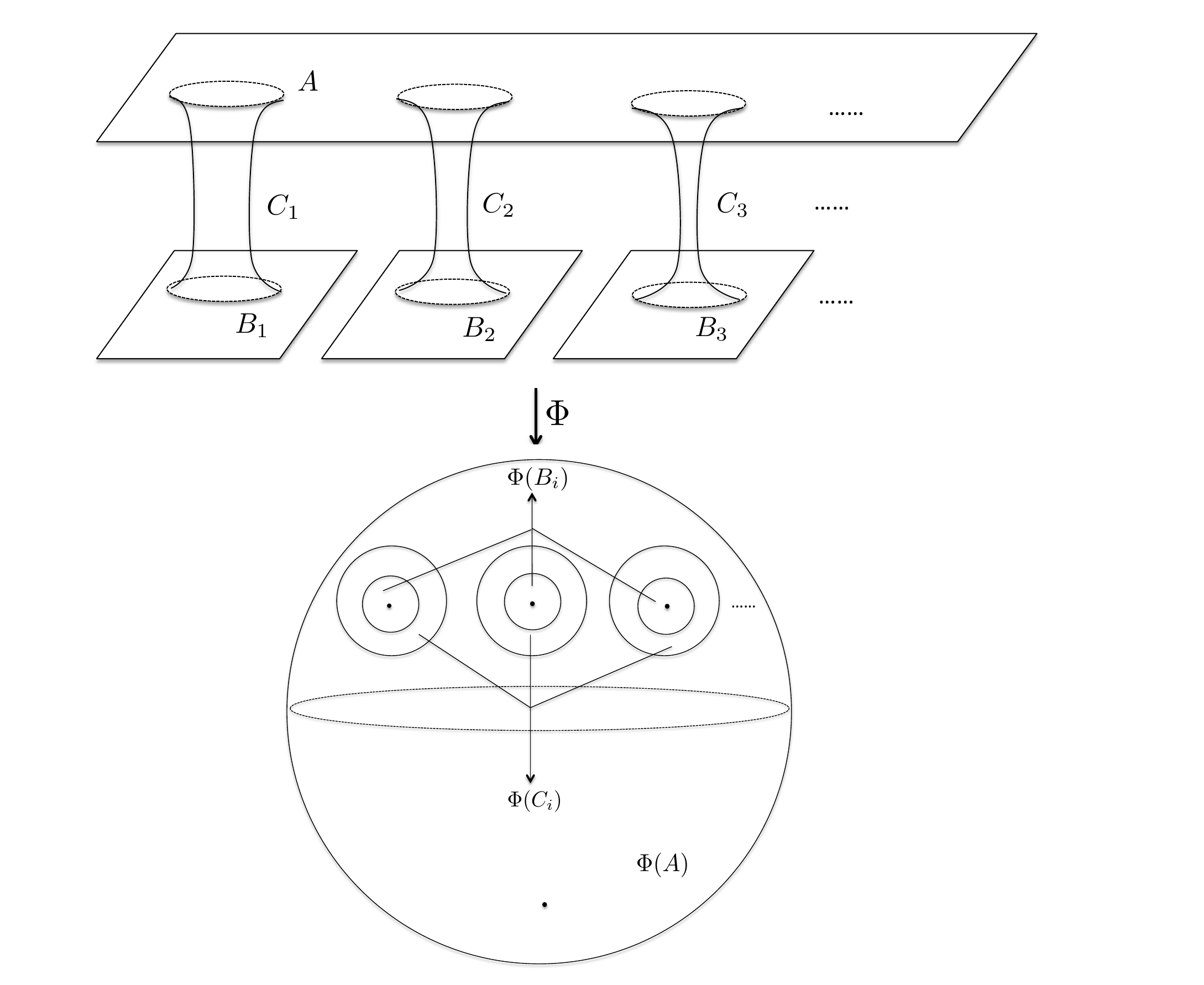}
\end{center}


\subsection{Finite point conformal compactification for domains in $\mathbb{S}^n$}\label{Subsec:finite point}

In this subsection we will study domains in the standard round sphere $\mathbb{S}^n$. Using n-Laplace equations and what have
been developed in \cite{MQ18} we state and prove a Huber's type finite point compactification theorem for domains in $\mathbb{S}^n$.

\begin{theorem}\label{Thm:theorem-B}
For $n\geq 3$, let $\Omega$ be a domain in the standard unit round sphere $(\mathbb{S}^n, g_{\mathbb{S}})$.
Suppose that, on $\Omega$, there is a conformal metric $g= e^{2u}g_{\mathbb{S}}$ satisfying
$$
\lim_{x\to\partial\Omega}u(x)=+\infty \text{ and } 
Ric^{-}_g|\nabla u|^{n-2}e^{2u} \in L^{1}(\Omega,g_{\mathbb{S}}).
$$
Then $\partial\Omega = \mathbb{S}^n\setminus\Omega$ is a finite point set. 
\end{theorem}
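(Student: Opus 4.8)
The plan is to reduce the global problem on $\Omega\subset\mathbb S^n$ to the local normal-form result Theorem \ref{Thm:MQ18} at each boundary point, and then use a covering/capacity argument together with Theorem \ref{Thm:theorem-A} to conclude finiteness. First, by Theorem \ref{Thm:theorem-A} (applied with the given hypothesis $Ric^-_g|\nabla u|^{n-2}e^{2u}\in L^1$ and noting $\lim_{x\to\partial\Omega}u=+\infty$), we already know $cap_n(\partial\Omega,D)=0$, so $(\Omega,g)$ is $n$-parabolic and $\partial\Omega$ is totally disconnected with Hausdorff dimension $0$. In particular $\partial\Omega$ is a compact, totally disconnected subset of $\mathbb S^n$; the remaining task is to promote ``dimension zero'' to ``finitely many points.''

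Next I would localize. Fix $p_0\in\partial\Omega$ and work in a conformal coordinate chart sending a neighborhood of $p_0$ to $B(0,2)\subset\r^n$, so that $g=e^{2v}|dx|^2$ near $0$ with $v=u+$ (smooth conformal factor); thus $v$ differs from $u$ by a bounded smooth function and $\lim_{x\to\partial\Omega}v=+\infty$ as well. The key point, flagged in the introduction, is that $v$ satisfies the $n$-Laplace equation \eqref{Equ:n-laplace} with a signed Radon measure on the right: one must check $-\Delta_n v = \mu$ where $\mu$ has finite total variation near $0$, using precisely the integrability $Ric^-_g|\nabla u|^{n-2}e^{2u}\in L^1$ to control the negative part of the right-hand side, while the positive part (with the opposite sign in the curvature) is handled by the completeness/blow-up of $v$. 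To apply Theorem \ref{Thm:MQ18}, which requires a \emph{nonnegative} $n$-superharmonic function with a \emph{nonnegative} measure, I would introduce the auxiliary function $\mathcal A(v)$ (as in \eqref{Equ:definition of v^+}, following \cite[Theorem 2.4]{KM92}) solving $-\Delta_n\mathcal A(v)=\mu^-$ on a small ball, so that $v+\mathcal A(v)$ is $n$-superharmonic with nonnegative driving measure; Theorem \ref{Thm:MQ18} then gives, along a set $n$-thin at $0$, the asymptotic $v(x)\sim m\log\frac1{|x|}$ with $m\ge 1$ by completeness, plus the lower bound $v(x)\ge m\log\frac1{|x|}-C$. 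Since this holds at \emph{every} boundary point and the local mass $m\ge 1$ is, by Lemma \ref{Lem:m=point mass}, the point mass of $\mu$ at that point, and since $\mu$ has finite total variation near $\partial\Omega$ (again from the $L^1$ hypothesis), there can be only finitely many points carrying mass $\ge 1$.

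To finish, I would argue that $\partial\Omega$ equals exactly this finite set of ``heavy'' points. Suppose $p\in\partial\Omega$ is not one of them; then near $p$ the measure $\mu$ has no atom, yet completeness forces $v\to+\infty$ at $p$, which via Theorem \ref{Thm:MQ18} (with the isolated-point hypothesis now removed, by Theorem \ref{Thm:improvement of theorem 1.1}) still yields $m\ge1$ at $p$, i.e. an atom of mass $\ge1$ — a contradiction unless $p$ is isolated in $\partial\Omega$, in which case $p$ \emph{is} a heavy point. Combining: $\partial\Omega$ is a finite union of isolated points, hence finite. The main obstacle I expect is the bookkeeping in Step 2: verifying rigorously that $v$ solves the $n$-Laplace equation in the distributional sense with a signed measure of \emph{locally finite total variation} despite $v$ (and $|\nabla v|$) blowing up at $\partial\Omega$, and that the construction of $\mathcal A(v)$ is compatible with the $n$-thin exceptional set in Theorem \ref{Thm:MQ18} — i.e. that adding $\mathcal A(v)$ does not destroy the limit identity. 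Everything else (the capacity input, the covering argument, the atom-counting) is comparatively routine once this localization is in place.
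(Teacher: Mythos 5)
Your outline tracks the paper's Steps 1, 2, and 4 quite closely: reduce to $\mathbb{R}^n$ by stereographic projection, show the right-hand side is a signed Radon measure of finite mass near $\partial\Omega$ (the cutoff $\alpha_s$ and Fatou argument), then use a uniform point-mass lower bound at each boundary point together with finiteness of total mass to conclude. But your Step 3 — the construction of the $n$-superharmonic majorant — contains a genuine error that would sink the argument as written.

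You propose to solve $-\Delta_n\mathcal A(v)=\mu^-$ and claim that $v+\mathcal A(v)$ is then $n$-superharmonic ``with nonnegative driving measure.'' That reasoning is valid only for a \emph{linear} operator: it implicitly uses $-\Delta_n(v+\mathcal A(v))=-\Delta_n v-\Delta_n\mathcal A(v)=\mu+\mu^-=\mu^+$, which is false because $\Delta_n$ is nonlinear and has no superposition principle. So $v+\mathcal A(v)$ is not $n$-superharmonic, and the hypotheses of Theorem \ref{Thm:MQ18} / Theorem \ref{Thm:improvement of theorem 1.1} are not met. The paper's construction avoids this entirely: it solves \eqref{Equ:definition of v^+}, namely $-\Delta_n\mathcal A(v)=(-\Delta_n v)^+=\mu^+$ with the \emph{same} boundary data as $v$. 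This makes $\mathcal A(v)$ itself $n$-superharmonic (nonnegative data), and one then uses a nonlinear comparison argument (Lemma \ref{Lem:comparison}, proved by testing the difference against a cutoff and using the monotonicity inequality $(|\vec a|^{n-2}\vec a-|\vec b|^{n-2}\vec b)\cdot(\vec a-\vec b)\ge 2^{1-n}|\vec a-\vec b|^n$) to obtain $v\le\mathcal A(v)+1$. That upper bound is what transfers completeness of $e^{2v}|dx|^2$ to completeness of $e^{2\mathcal A(v)}|dx|^2$, so that $m\ge1$ can be applied to $\mathcal A(v)$, not to $v$. In short: replace your additive correction by a comparison-based majorant solving for $\mu^+$ with matching boundary values.

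A smaller point: your closing dichotomy between ``heavy'' and ``light'' boundary points is unnecessary and slightly muddled. The actual argument is simpler and uniform: Lemma \ref{Lem:m=point mass} gives $n\omega_n m^{n-1}\le\mu(\{p\})$ at \emph{every} $p$ in the singular set, and Theorem \ref{Thm:improvement of theorem 1.1} (whose improvement over \cite[Theorem 1.1]{MQ18} is precisely to drop the isolated-point hypothesis, via the radial projection / capacity argument) gives $m\ge1$ at \emph{every} such $p$. So every boundary point carries an atom of size at least $n\omega_n$, and finiteness of the total mass immediately forces finiteness of $\partial\Omega$.
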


\proof Our proof of this theorem is divided into the following four steps. But, first, in order to employ estimates of Wolff potential, 
we consider n-Laplace equations on Euclidean space $\mathbb{R}^n$ via a stereographic projection $\mathcal{P}$ 
with respect to an interior point of $\Omega$. 
We have then a complete conformal metric $g = e^{2v}|dx|^2$ and n-Laplace equations
\begin{equation}\label{Equ:euclidean-n-laplace}
-\Delta_n v = Ric_g(\nabla v) |\nabla v|^{n-2} e^{2v} = f(x) \qquad  \text{ in $\mathcal{P}(\Omega) \subset \mathbb{R}^n$}
\end{equation}
thanks to the conformal invariance of the n-Laplace equation \eqref{Equ:n-laplace}. 
$\mathcal{S} = \partial\mathcal{P}(\Omega)\subset \mathbb{R}^n$ becomes a compact subset of zero Hausdorff 
dimensions in the light of Theorem \ref{Thm:theorem-A}. To focus on the asymptotic behavior near $\mathcal{S}$, 
we will consider a bounded open neighborhood $D\subset \mathbb{R}^n$ 
of $\mathcal{S}$. The right hand side of \eqref{Equ:euclidean-n-laplace} satisfies that 
$f^- \in L^1(D, |dx|^2)$ as well as 
$$
\lim_{x\to\mathcal{S}} v(x) = \infty
$$
according to the assumptions. 
 
\vskip 0.1in
\noindent
{\bf Step 1.} In this step we show that in fact 
$$
f\in L^1(D, |dx|^2)
$$
from $f^-\in L^1(D, |dx|^2)$. This situation is similar to what was dealt with in \cite[Lemma 1]{BV89}(see also \cite{KV86, V17}), particularly 
the idea in \cite[Lemma 1]{BV89} seems to work. But we adopt a more straightforward argument here. 
The following cutoff function is very useful to our investigations and has been used in \cite[Lemma 14]{DHM97} and \cite{MQ18}. Let
\[
\alpha_{s}(t)=\begin{cases}
t & t\le s,\\
{\rm increasing} & s\le t\le10s \\
2s & t\ge10s.
\end{cases}
\]
We may ask $0\leq \alpha_{s}'(t)\le1$ and $\alpha''_{s}(t)\le0$. It is easily seen that
\begin{itemize}
\item $\alpha_s(t) \leq t$
\item $\lim_{s\to+\infty}\alpha_{s}(t)=t \text{ for any $t\in (-\infty, \infty)$}$
\item $\alpha_s(v) = 2s$ in the neighborhood $\{x\in\mathbb{R}^n: v(x) > 10s\}$ of $\partial D$
\item $-\Delta_{n}\alpha_{s}(v)=0$ in the neighborhood $\{x\in\mathbb{R}^n: v(x) > 10s\}$ of $\partial D$
\item $\alpha_{s}(v)\in C^{\infty}(\mathbb{R}^n)$.
\end{itemize}
More importantly,
\begin{equation} \label{Equ:inside}
\int_{D}-\Delta_{n}\alpha_{s}(v) = \int_{\partial D} |\nabla v|^{n-2} \frac{\partial v}{\partial\nu} 
\end{equation}
provided $s>\max_{\partial D}v$. Let
\begin{equation}\label{Equ:f_s}
f_{s}=-\Delta_{n}a_{s}(v).
\end{equation}
We shall prove that 
\begin{equation}\label{L1 norm of delta as}
\|f_{s}\|_{L^{1}(D, |dx|^2)}\le C \text{ for a constant $C$ that is independent of $s$}.
\end{equation}
We first calculate 
\begin{equation}\label{Equ:laplace of alpha-s}
- \Delta_n \alpha_s(v)  = (\alpha'_s(v))^{n-1} (-\Delta_n v) - (n-1) (\alpha'_s(v))^{n-2} \alpha''_s(v) |\nabla v|^n,
\end{equation} 
where the second term in the right side $-(n-1) (\alpha'_s(v))^{n-2} \alpha''_s(v) |\nabla v|^n$ is nonnegative because 
$\alpha_s'' \leq 0$. Hence
$$
f_s^- = ((\alpha'_s(v))^{n-1} (-\Delta_n v) - (n-1) (\alpha'_s(v))^{n-2} \alpha''_s(v) |\nabla v|^n)^- \leq 
(\alpha'_s(v))^{n-1} f^- \leq f^-,
$$
where we use the fact that $\alpha_s'\in [0, 1]$. This implies
$$
 \|f_s^- \|_{L^1(D, |dx|^2)} =\int_D f_s^- dx \leq \|f^- \|_{L^1(D, |dx|^2)}.
$$
Therefore
$$
\aligned
\int_D f_s^+ dx & = \int_D f_s  dx + \int_D f_s^- dx  \\
& = \int_{\partial D} |\nabla v|^{n-2} \frac{\partial v}{\partial\nu} 
+ \int_D f_s^- dx \\
& \leq \int_{\partial D} |\nabla v|^{n-2} \frac{\partial v}{\partial\nu} +  \|f^-\|_{L^1(D, |dx|^2)}.
\endaligned
$$
The claim \eqref{L1 norm of delta as} is proven. Next we apply Fatou lemma
$$
\int_D f^+ dx \leq \liminf_{s\to \infty} \int_D f_s^+ \leq \int_{\partial D} |\nabla v|^{n-2} \frac{\partial v}{\partial\nu} + \|f^-\|_{L^1(D, |dx|^2)}
$$
and complete the first step. We now collect what we have got in this step as a lemma.

\begin{lemma}\label{Lem:B-step-1}
Under the assumptions in Theorem \ref{Thm:theorem-B}, 
$$
\aligned
\|f_s\|_{L^1(D, |dx|^2)} & \leq \int_{\partial D} |\nabla v|^{n-2} \frac{\partial v}{\partial\nu} + 2 \|f^-\|_{L^1(D, |dx|^2)} \text{ and } \\
\|f\|_{L^1(D, |dx|^2)} & \leq \int_{\partial D} |\nabla v|^{n-2} \frac{\partial v}{\partial\nu} + 2 \|f^-\|_{L^1(D, |dx|^2)}.
\endaligned
$$
Moreover $-\Delta_n \alpha_s(v)$ is a Radon measure on $D$ and
$$
\aligned
|(-\Delta_n\alpha_s(v)) (\phi) |& = |\int_{\mathbb{R}^n} (-\Delta_n\alpha_s(v))\phi dx| \\
& \leq (\int_{\partial D} |\nabla v|^{n-2} \frac{\partial v}{\partial\nu} + 2\|f^-\|_{L^1(D, |dx|^2)}) \|\phi\|_{C^0(D)}
\endaligned
$$
for any continuous function with compact support $\phi\in C_c^0(D)$, provided  $s$ is appropriately large. 
\end{lemma}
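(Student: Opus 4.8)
The plan is simply to assemble the three assertions from the computations already performed in Step~1, so the argument is mostly bookkeeping. First I would derive the bound on $\|f_s\|_{L^1(D,|dx|^2)}$. Starting from the pointwise identity \eqref{Equ:laplace of alpha-s} and using $\alpha_s''\le 0$ together with $\alpha_s'\in[0,1]$, one has the pointwise inequality $f_s^-\le(\alpha_s'(v))^{n-1}f^-\le f^-$, hence $\|f_s^-\|_{L^1(D,|dx|^2)}\le\|f^-\|_{L^1(D,|dx|^2)}$. Since $\alpha_s(v)\equiv 2s$, and is therefore $n$-harmonic, in a neighborhood of $\partial D$ once $s>\max_{\partial D}v$, the divergence identity \eqref{Equ:inside} gives $\int_D f_s\,dx=\int_{\partial D}|\nabla v|^{n-2}\frac{\partial v}{\partial\nu}$. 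Combining the two, $\int_D f_s^+\,dx=\int_D f_s\,dx+\int_D f_s^-\,dx\le\int_{\partial D}|\nabla v|^{n-2}\frac{\partial v}{\partial\nu}+\|f^-\|_{L^1(D,|dx|^2)}$, and adding $\int_D f_s^-\,dx$ once more yields $\|f_s\|_{L^1(D,|dx|^2)}\le\int_{\partial D}|\nabla v|^{n-2}\frac{\partial v}{\partial\nu}+2\|f^-\|_{L^1(D,|dx|^2)}$, the first displayed bound.

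For the bound on $\|f\|_{L^1(D,|dx|^2)}$ I would let $s\to\infty$. For fixed $x$ the value $v(x)$ eventually lies below the region where $\alpha_s$ bends, so $\alpha_s'(v)\to1$ and $\alpha_s''(v)\to0$ pointwise; since $v$ is smooth, the right-hand side of \eqref{Equ:laplace of alpha-s} then converges pointwise to $-\Delta_n v=f$, with nothing to check on the set $\{\nabla v=0\}$ since both $f_s$ and $f$ vanish there in the sense of the degenerate operator. Fatou's lemma applied to $f_s^+$ gives $\int_D f^+\,dx\le\liminf_{s\to\infty}\int_D f_s^+\,dx\le\int_{\partial D}|\nabla v|^{n-2}\frac{\partial v}{\partial\nu}+\|f^-\|_{L^1(D,|dx|^2)}$, and adding $\|f^-\|_{L^1(D,|dx|^2)}$, which is finite since $f^-\in L^1(D,|dx|^2)$ by the hypothesis of Theorem~\ref{Thm:theorem-B} and the conformal invariance of the $n$-Laplace equation \eqref{Equ:n-laplace}, produces the second displayed bound.

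For the Radon-measure statement, since $v$ is smooth and $\alpha_s$ is smooth, $\alpha_s(v)$ is $C^2$ and $-\Delta_n\alpha_s(v)$ coincides with the $L^1(D)$ function $f_s$; hence $\phi\mapsto\int_{\mathbb{R}^n}(-\Delta_n\alpha_s(v))\phi\,dx=\int_D f_s\phi\,dx$ is a signed Radon measure on $D$ of total mass at most $\|f_s\|_{L^1(D,|dx|^2)}$, and for $\phi\in C^0_c(D)$ one has $|(-\Delta_n\alpha_s(v))(\phi)|\le\|f_s\|_{L^1(D,|dx|^2)}\|\phi\|_{C^0(D)}$; substituting the bound from the first paragraph and keeping $s>\max_{\partial D}v$ throughout finishes everything.

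I do not expect a genuine obstacle here; the only point needing a word of care is the a.e.\ convergence $f_s\to f$ at critical points of $v$, which is handled by observing that $|\nabla\alpha_s(v)|^{n-2}=(\alpha_s'(v))^{n-2}|\nabla v|^{n-2}$ vanishes there exactly as $|\nabla v|^{n-2}$ does, and that the coefficients of $\Delta_n$ depend continuously on $(\nabla v,\nabla^2 v)$ wherever $\nabla v\ne0$. Everything else is a reorganization of Step~1.
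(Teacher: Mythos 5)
Your proposal is correct and follows essentially the same route as the paper's own Step 1: the pointwise inequality $f_s^-\le(\alpha_s'(v))^{n-1}f^-\le f^-$, the divergence identity $\int_D f_s\,dx=\int_{\partial D}|\nabla v|^{n-2}\partial_\nu v$, the decomposition $\int f_s^+=\int f_s+\int f_s^-$, Fatou for $f^+$, and the trivial total-variation bound for the Radon measure. One small wording point you inherited from the paper: for $s>\max_{\partial D}v$ one has $\alpha_s(v)\equiv 2s$ (hence $n$-harmonic) in a neighborhood of the singular set $\mathcal{S}$, not of $\partial D$; near $\partial D$ one instead has $\alpha_s(v)=v$, which is what makes the boundary flux equal $\int_{\partial D}|\nabla v|^{n-2}\partial_\nu v$. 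With that clarification, your argument matches the paper's.
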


\vskip 0.1in
\noindent{\bf Step 2.} In this step we want to show that in fact $-\Delta_n v$ is a Radon measure in $D$. The analysis in this step 
is very subtle and one needs to be very careful to understand the Radon measure $-\Delta_n v$ from 
$-\Delta_n\alpha_s(v)$. We start with $\phi\in C^\infty_c(D)$
$$
(-\Delta_n v )(\phi) = (|\nabla v|^{n-2}\nabla v) (\nabla\phi)
$$
by the definition of distributional derivatives. We then claim that $|\nabla v|^{n-1}\in L^1(D, |dx|^2)$ 
(cf. Lemma \ref{Lem:L^{n-1}}). Notice that $\nabla \alpha_s(v) = \alpha_s'(v) \nabla v$ and $\alpha_s'(v) \in [0, 1]$. 
Therefore, by the dominated convergence theorem, we have
\begin{equation}\label{Equ:DCT}
\aligned
(-\Delta_n v )(\phi) & = \int  |\nabla v|^{n-2}\nabla v \cdot\nabla\phi  \\
& = \lim_{s\to\infty} \int  |\nabla \alpha_s(v)|^{n-2}\nabla \alpha_s(v)\cdot\nabla\phi \\
& = \lim_{s\to\infty}(-\Delta_n\alpha_s(v))(\phi).
\endaligned
\end{equation}
Therefore, by Lemma \ref{Lem:B-step-1},
$$
|(-\Delta_n v )(\phi)| = |\lim_{s\to\infty} \int_D -\Delta_n\alpha_s(v) \phi dx| \leq  2\|f^-\|_{L^1(D, |dx|^2)} \|\phi\|_{C^0(D)},
$$
which implies that $-\Delta_n v$ is a Radon measure on $D$. 
In fact we want to be a bit more careful and calculate the following according to \eqref{Equ:laplace of alpha-s} and Step 1
$$
\aligned
\int_D (-\Delta_n v) \phi dx + & (-\Delta_n v)|_{\mathcal{S}} (\phi)  = (-\Delta_n v )(\phi) = \lim_{s\to\infty} \int_D -\Delta_n\alpha_s(v) \phi dx\\
& = \lim_{s\to\infty} \int_D [(\alpha'_s(v))^{n-1} (-\Delta_n v) - (n-1) (\alpha'_s(v))^{n-2} \alpha''_s(v) |\nabla v|^n ]dx \\
& \geq \lim_{s\to\infty} \int_D (\alpha'_s(v))^{n-1} (-\Delta_n v) dx \\
&\geq \int_D (-\Delta_n v)\phi dx,
\endaligned
$$
which implies that $-\Delta_n v$ is nonnegative on its support $\mathcal{S}$. Let us first collect what we have derived from the claim in this 
step as the following lemma.

\begin{lemma}\label{Lem:radon}
Under the assumptions in Theorem \ref{Thm:theorem-B}, $-\Delta_n v$ is a Radon measure on $\mathbb{R}^n$ and is nonnegative 
on its support $\partial D$. 
\end{lemma}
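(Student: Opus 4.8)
The proof collects the two estimates established in Steps 1 and 2, the only substantive analytic ingredient being the integrability $|\nabla v|^{n-1}\in L^1(D,|dx|^2)$ asserted in Lemma \ref{Lem:L^{n-1}}; I describe the plan granting that.

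\emph{Step A: $-\Delta_n v$ is a Radon measure.} By Lemma \ref{Lem:B-step-1}, for all sufficiently large $s$ the distributions $f_s=-\Delta_n\alpha_s(v)$ are signed Radon measures on $D$ whose total variations are bounded by the fixed constant $\Lambda:=\int_{\partial D}|\nabla v|^{n-2}\frac{\partial v}{\partial\nu}+2\|f^-\|_{L^1(D,|dx|^2)}$, uniformly in $s$. For $\phi\in C_c^\infty(D)$ I would pass to the limit in the distributional pairing using $\nabla\alpha_s(v)=\alpha_s'(v)\,\nabla v$ with $\alpha_s'(v)\in[0,1]$, $\alpha_s'(v)\to1$ pointwise, and the domination $|\nabla\alpha_s(v)|^{n-1}\le|\nabla v|^{n-1}\in L^1(D,|dx|^2)$, so that dominated convergence gives
$$
(-\Delta_n v)(\phi)=\int_D|\nabla v|^{n-2}\nabla v\cdot\nabla\phi\,dx=\lim_{s\to\infty}\int_D|\nabla\alpha_s(v)|^{n-2}\nabla\alpha_s(v)\cdot\nabla\phi\,dx=\lim_{s\to\infty}f_s(\phi).
$$
Combining with the uniform bound, $|(-\Delta_n v)(\phi)|\le\Lambda\,\|\phi\|_{C^0(D)}$ for all $\phi\in C_c^\infty(D)$; by density this extends to a bounded functional on $C_c^0(D)$, so by the Riesz representation theorem $-\Delta_n v$ is a signed Radon measure on $\mathbb{R}^n$, supported in $\bar D$.

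\emph{Step B: nonnegativity of the singular part.} Since \eqref{Equ:euclidean-n-laplace} holds classically on $D\setminus\mathcal{S}$ with $f\in L^1(D,|dx|^2)$ (Step 1) and $\mathcal{S}$ has Hausdorff dimension $0$, hence is Lebesgue-null, one has $-\Delta_n v=f\,dx+(-\Delta_n v)|_{\mathcal{S}}$. Take $\phi\in C_c^\infty(D)$ with $\phi\ge0$. The pointwise identity \eqref{Equ:laplace of alpha-s} gives, a.e. on $D$,
$$
-\Delta_n\alpha_s(v)=(\alpha_s'(v))^{n-1}f-(n-1)(\alpha_s'(v))^{n-2}\alpha_s''(v)|\nabla v|^n\ \ge\ (\alpha_s'(v))^{n-1}f,
$$
the discarded term being nonnegative because $\alpha_s''\le0$ (and $\alpha_s(v)$ being smooth, $f_s$ has no singular part). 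Testing against $\phi$, sending $s\to\infty$, and using dominated convergence on the right (via $|(\alpha_s'(v))^{n-1}f\phi|\le|f|\,\|\phi\|_{C^0(D)}$), I obtain
$$
\int_D f\phi\,dx+(-\Delta_n v)|_{\mathcal{S}}(\phi)=\lim_{s\to\infty}f_s(\phi)\ \ge\ \int_D f\phi\,dx,
$$
whence $(-\Delta_n v)|_{\mathcal{S}}(\phi)\ge0$ for every $\phi\ge0$; that is, the singular part of $-\Delta_n v$ is a nonnegative measure carried by $\mathcal{S}$, which is the assertion of the lemma.

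\emph{Expected main obstacle.} Everything above is dominated-convergence bookkeeping once Lemma \ref{Lem:L^{n-1}}, i.e. $|\nabla v|^{n-1}\in L^1(D,|dx|^2)$, is available; this is the one genuinely delicate point, and without it the limit in Step A is not licit. I expect to prove it by truncation (Boccardo--Gallou\"{e}t type) estimates for the $n$-Laplacian with $L^1$ datum: testing $-\Delta_n\alpha_s(v)=f_s$ against truncations of $\alpha_s(v)-2s$ (which vanish near $\mathcal{S}$), times a fixed cutoff that kills $\partial D$, one gets, uniformly in $s$, a bound on $\int_{D}|\nabla\alpha_s(v)|^{q}$ for every $q<n$ — in particular $q=n-1$ — in terms of $\Lambda$, the finiteness of $\|f_s\|_{L^1(D,|dx|^2)}$ from Step 1 being exactly what powers the estimate; Fatou's lemma as $s\to\infty$ then yields $|\nabla v|^{n-1}\in L^1(D,|dx|^2)$ on the bounded set $D$.
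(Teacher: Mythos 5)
Your proposal is correct and follows essentially the same route as the paper: Step~A is exactly the paper's passage to the limit via dominated convergence using $|\nabla\alpha_s(v)|^{n-1}\le|\nabla v|^{n-1}\in L^1$ together with the uniform total-variation bound from Lemma~\ref{Lem:B-step-1}, and Step~B is the paper's argument based on the pointwise inequality from \eqref{Equ:laplace of alpha-s} and $\alpha_s''\le 0$ tested against nonnegative $\phi$. Your write-up is in fact slightly cleaner than the paper's display (which drops a $\phi$ in two lines of the chain of inequalities), and you correctly flag Lemma~\ref{Lem:L^{n-1}} as the one load-bearing ingredient; the paper proves it by a capacity-plus-Sobolev absorption argument that is kindred to, though not identical with, the Boccardo--Gallou\"et truncation scheme you sketch.
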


To complete the proof of Lemma \ref{Lem:radon}, we state a lemma to ensure the claim we made in the above. This lemma is well known 
for n-superharmonic functions (cf. \cite{KM92} and references therein). We want to extend it to our cases. 

\begin{lemma}\label{Lem:L^{n-1}} Under the assumptions in Theorem \ref{Thm:theorem-B}, for any $1\leq q< n$, 
$$
\|\nabla v\|_{L^q(D, |dx|^2)} \leq C \text{ and } \|\nabla \alpha_s(v)\|_{L^q(D, |dx|^2)} \leq C
$$
for a constant $C$ that is independent of $s$. 
\end{lemma}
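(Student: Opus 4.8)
The plan is to establish the two gradient bounds in Lemma~\ref{Lem:L^{n-1}} by a Moser–type / De Giorgi truncation argument applied to the function $\alpha_s(v)$, using a test function that is itself a truncation of $\alpha_s(v)$ near its maximum, and then to pass to the limit $s\to\infty$ to recover the bound for $\nabla v$. The point is that $\alpha_s(v)$ is bounded (between $\inf_D v$ and $2s$), smooth, $n$-harmonic near $\partial D$, and satisfies $-\Delta_n\alpha_s(v) = f_s$ with $\|f_s\|_{L^1(D)}\le C$ independent of $s$ (Lemma~\ref{Lem:B-step-1}). So the real content is the standard fact that an $n$-Laplacian ``solution'' with an $L^1$ right-hand side and bounded boundary data has gradient in $L^q$ for every $q<n$, with a bound depending only on the $L^1$ norm of the datum, the boundary data, and $D$ — the Boccardo–Gallou\"et estimate.

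\medskip
\textbf{Step 1 (truncation estimate for $\alpha_s(v)$).} First I would set $w_s = \alpha_s(v) - \min_{\partial D}v$ (or $w_s = 2s - \alpha_s(v)$, whichever makes $w_s\ge 0$ convenient), so that $w_s\ge 0$, $w_s$ is smooth on a neighborhood of $\overline D$, and $-\Delta_n w_s = \pm f_s$. For $k>0$ let $T_k(w_s) = \min\{w_s,k\}$ and test the equation with $T_k(w_s)\,\zeta^n$ for a fixed cutoff $\zeta\in C^\infty_c(D')$ equal to $1$ on $D$, where $D'\supset\supset D$ is a slightly larger neighborhood of $\mathcal S$ still inside the region where the boundary behavior is controlled. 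Since $T_k(w_s)$ is used as a test function, the boundary terms vanish, and one obtains
\begin{equation}\label{Equ:BG}
\int_{\{w_s\le k\}} |\nabla w_s|^n \zeta^n \,dx \le C\,k\,\bigl(\|f_s\|_{L^1(D')} + \text{lower order in } \zeta\bigr) \le C_1 k,
\end{equation}
with $C_1$ independent of $s$ and $k$. This is the level-set energy estimate at the heart of the Boccardo–Gallou\"et method. A small point to watch: because $\alpha_s(v)$ is already $n$-harmonic where $v>10s$, the measure $f_s$ has no singular part and \eqref{Equ:BG} is a clean integration by parts; no distributional subtleties arise at this stage.

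\medskip
\textbf{Step 2 (from level sets to $L^q$ gradient bound).} From \eqref{Equ:BG} one runs the standard interpolation: for $q<n$,
\begin{equation*}
\int_D |\nabla w_s|^q\,dx = \sum_{j\ge 0}\int_{\{2^j\le w_s < 2^{j+1}\}} |\nabla w_s|^q\,dx \le \sum_j \Bigl(\int_{\{w_s<2^{j+1}\}}|\nabla w_s|^n\Bigr)^{q/n}\,\bigl|\{w_s\ge 2^j\}\bigr|^{1-q/n},
\end{equation*}
and one estimates $|\{w_s\ge 2^j\}|$ via Chebyshev together with the Sobolev inequality applied to $T_{2^j}(w_s)$ and \eqref{Equ:BG}, giving $|\{w_s\ge 2^j\}| \le C\, 2^{-j n/(n-1)}$ up to the $L^1$-datum factors. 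Plugging in, the series $\sum_j 2^{jq/n}\cdot 2^{-j\frac{n}{n-1}(1-q/n)}$ converges precisely when $q<n$, yielding $\|\nabla w_s\|_{L^q(D)}\le C$ with $C$ independent of $s$. Since $\nabla\alpha_s(v) = \nabla w_s$ up to sign, this proves the second inequality in the lemma. Finally, because $\nabla\alpha_s(v) = \alpha_s'(v)\nabla v$ with $\alpha_s'(v)\in[0,1]$ and $\alpha_s'(v)\to 1$ pointwise on $D$ (each point of $D$ has $v$ finite, so $v<10s$ eventually), Fatou's lemma gives $\|\nabla v\|_{L^q(D)} \le \liminf_{s\to\infty}\|\nabla\alpha_s(v)\|_{L^q(D)} \le C$, which is the first inequality.

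\medskip
\textbf{Main obstacle.} The delicate point is not the iteration itself but justifying that $T_k(w_s)\zeta^n$ is a legitimate test function and that the boundary contributions really drop out — i.e. that one stays in the regime $s>\max_{\partial D}v$ and that $\zeta$ is supported where \eqref{Equ:inside}-type integration by parts is valid, so that one never sees the singular set $\mathcal S$ (this is exactly where Step~1's observation that $\alpha_s(v)$ is $n$-harmonic near $\partial D$ is used). A secondary subtlety is extracting the uniform-in-$s$ constant: one must track that every constant entering \eqref{Equ:BG} depends on $f_s$ only through $\|f_s\|_{L^1}$, which is bounded uniformly by Lemma~\ref{Lem:B-step-1}, and on $\zeta$ and the fixed geometry of $D\subset D'\subset\mathbb{R}^n$, none of which move with $s$. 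Once that bookkeeping is in place the result follows from the classical $n$-Laplacian $L^1$-data theory essentially verbatim.
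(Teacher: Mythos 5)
Your plan is a Boccardo--Gallou\"et level-set argument for the $n$-Laplacian with $L^1$ data, localized by a cutoff $\zeta$ so that the singular set $\mathcal S$ and the boundary $\partial D$ are handled separately. This is a genuinely different route from the paper's proof. The paper H\"older-interpolates $\int_{D_l}|\nabla\alpha_s(v)|^q$ against the weighted energy $\int_{D_l}|\nabla\alpha_s(v)|^n\alpha_s(v)^{-n}$, and then makes that weighted energy arbitrarily small by testing with $\zeta^n\alpha_s(v)^{1-n}$, where $\zeta$ is an $n$-capacity test function for the pair $(D_l, D_k)$; the crucial input is the capacity vanishing $\lim_{l\to\infty}\mathrm{cap}_n(D_l,D_k)=0$ from Theorem~\ref{Thm:theorem-A}, which allows the resulting Sobolev term to be absorbed. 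Your route does not need this capacity input and is more directly modeled on the classical $L^1$-data regularity theory, which is an attractive simplification --- but as written it does not close.

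There is a real gap in Step~2. You assert $|\{w_s\ge 2^j\}|\le C\,2^{-jn/(n-1)}$ and then that the series
$\sum_j 2^{jq/n}\cdot 2^{-j\frac{n}{n-1}(1-q/n)}$
converges precisely when $q<n$. Both claims are wrong. With the stated decay exponent, the series exponent equals $\frac{q(2n-1)-n^2}{n(n-1)}$, which is negative only for $q<\frac{n^2}{2n-1}<n$, so your arithmetic does not reach the full range $q<n$. Worse, the ``natural'' Sobolev estimates one would write down in this situation --- either $W^{1,1}\hookrightarrow L^{n/(n-1)}$ applied to $T_k(w_s)$ together with H\"older, or Chebyshev with $W^{1,n}_0\hookrightarrow L^{n/(n-1)}$ --- give only $|\{w_s\ge k\}|\le Ck^{-1}$, which restricts you to $q<n/2$. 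The borderline case $p=n$ forces you to exploit the full strength of the $W^{1,n}$ embedding: either Trudinger--Moser, $W^{1,n}_0\hookrightarrow e^{L^{n/(n-1)}}$, which combined with~\eqref{Equ:BG} gives exponential decay $|\{w_s\ge k\}|\le Ce^{-ck}$, or, equivalently, $W^{1,n}_0\hookrightarrow L^r$ for arbitrarily large $r<\infty$, which gives $|\{w_s\ge k\}|\le C_r k^{-r(n-1)/n}$; choosing $r$ large depending on $q$ makes the dyadic series converge for every $q<n$. With either of these substitutions your Step~2 is correct and the lemma follows. Your Step~1 is sound: the level-set energy estimate~\eqref{Equ:BG} does hold with constant independent of $s$ once you observe, as you do, that $\nabla\alpha_s(v)=\nabla v$ on the compact set $\mathrm{supp}\,\nabla\zeta$ for $s$ large, so the cross term is controlled by $Ck$ uniformly in $s$; and the Fatou step at the end is fine.
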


\proof First we introduce the notations
$$
D_k = \{x\in \mathbb{R}^n: v(x) > k\}
$$
where we consider $v(x) = \infty$ on $\mathcal{S}$. We may assume that $D_k\subset D$ for $k=1, 2, \cdots$.
We start with the estimates using H\"{o}lder inequality
\begin{equation}\label{Equ:holder}
\aligned
\int_{D_l} |\nabla \alpha_s(v)|^q dx & = \int_{D_l} \alpha_s(v)^q |\alpha_s(v)^{-1}\nabla\alpha_s(v)|^q dx \\
& \leq (\int_{D_l} \alpha_s(v)^\frac {nq}{n-q}dx)^\frac {n-q}n (\int_{D_l} |\nabla\alpha_s(v)|^n \alpha_s(v)^{-n} dx)^\frac qn.
\endaligned
\end{equation}
For the first factor in the right hand side of \eqref{Equ:holder}, we rely on the Sobolev inequality.
$$
\aligned
\|\alpha_s(v)\|_{L^\frac {nq}{n-q}(D_l, |dx|^2)} & \leq \|\alpha_s(v) - l\|_{L^\frac {nq}{n-q}(D_l, |dx|^2)} + \| l\|_{L^\frac {nq}{n-q}(D_l, |dx|^2)}\\
& \leq C(n) \|\nabla\alpha_s(v)\|_{L^q(D_l, |dx|^2)} + l Vol(D_l)^{\frac 1q - \frac 1n},
\endaligned
$$ 
which implies
\begin{equation}\label{Equ:sobolev}
\|\alpha_s(v)\|_{L^\frac {nq}{n-q}(D_l)}^q \leq 2^{q-1}(C(n)^q\int_{D_l}|\nabla\alpha_s(v)|^qdx + l^q Vol(D_l)^{1 - \frac qn}).
\end{equation}
Next, we want to estimate the second factor in the right hand side of \eqref{Equ:holder}. Thanks to Theorem \ref{Thm:theorem-A}, we know
$$
\lim_{l\to\infty} cap_n(D_l, D_k) = 0
$$
for any fixed and large $k$. $k$ is chosen such that, for $\epsilon >0$, 
\begin{equation}\label{Equ:choice of k}
\frac 1{k^{n-1}}\int_{D_k}|f_s| dx \leq \frac 12 \epsilon^\frac nq.
\end{equation}
Then, for this fixed and large $k$, there exists $l$ and a test function $\zeta(x)$ such that
$$
\zeta (x)\in C^\infty_c(D_k), \zeta\in [0, 1], \zeta \equiv 1 \text{ on } D_l 
$$
and moreover 
\begin{equation}\label{Equ:choice of l}
\int |\nabla\zeta|^n dx \leq \frac 12 (\frac {n-2}{n-1})^{n-1} \epsilon^\frac nq.
\end{equation}
Multiplying $\zeta^n\alpha_s(v)^{-n+1}$ to both sides of \eqref{Equ:f_s} and integrating, we have
$$
\aligned
\int f_s \zeta^n \alpha_s(v)^{-n+1} dx & = \int (-\Delta_n\alpha_s(v)) \zeta^n\alpha_s(v)^{-n+1} dx \\
& = \int|\nabla\alpha_s(v)|^{n-2}\nabla\alpha_s(v)\cdot\nabla(\zeta^n\alpha_s(v)^{-n+1}) dx \\
= n \int |\nabla\alpha_s(v)|^{n-2} \alpha_s(v)^{-n+1} & \zeta^{n-1} \nabla\alpha_s(v) \cdot\nabla\zeta dx  -
(n-1)\int |\nabla\alpha_s(v)|^n \zeta^n\alpha_s(v)^{-n} dx,
\endaligned
$$
wich implies
$$
\aligned
(n-1)\int |\nabla\alpha_s(v)|^n & \zeta^n\alpha_s(v)^{-n} dx \leq \frac 1{k^{n-1}} \int_{D_k} |f_s|dx \\
+  (\frac{n}{n-1} (n-2)\int & |\nabla\alpha_s(v)|^n\zeta^n \alpha_s(v)^{-n} dx)^\frac {n-1}n  ( n(\frac {n-1}{n-2})^{n-1}\int |\nabla\zeta|^n dx)^\frac 1n.
\endaligned
$$
Therefore, applying Young's inequality, we get
\begin{equation}\label{Equ:young}
\aligned
\int |\nabla\alpha_s(v)|^n \zeta^n\alpha_s(v)^{-n} dx  & \leq \frac 1{k^{n-1}} \int_{D_k} |f_s|dx 
+  (\frac {n-1}{n-2})^{n-1} \int |\nabla\zeta|^n dx \\
& \leq \epsilon^\frac nq.
\endaligned
\end{equation}
in the light of \eqref{Equ:choice of k} and \eqref{Equ:choice of l}. Combining \eqref{Equ:holder}, \eqref{Equ:sobolev} and \eqref{Equ:young}, we arrive at
$$
\int_{D_l} |\nabla\alpha_s(v)|^q dx \leq 2^{q-1}C(n)^q\epsilon \int_{D_l}|\nabla\alpha_s(v)|^q dx + 
2^{q-1}l^q \epsilon Vol(D_l)^{1 - \frac qn} ,
$$
For appropriate choice of $\epsilon, k, l$ so that $2^{q-1}C(n)^q \epsilon \leq \frac 12$ (independent of $s$), the above implies
 \begin{equation}\label{Equ:integrability}
\int_{D_l} |\nabla\alpha_s(v)|^q dx \leq 2^{q}l^q \epsilon Vol(D_l)^{1 - \frac qn} <\infty
\end{equation}
and finishes the proof of this lemma by Fatou lemma.
\endproof

\vskip 0.1in
\noindent{\bf Step 3.} In this step we want to build an n-superharmonic function as a sharp upper bound to $v$ so that the 
strengthened \cite[Theorem 1.1]{MQ18} is applicable. 
For this purpose we introduce the desired n-superharmonic function $\mathcal{A}(v)$ to be the solution to
\begin{equation}\label{Equ:definition of v^+}
\left\{\aligned
-\Delta_n \mathcal{A}(v) & = (-\Delta_n v)^+ \text{ in $D$}\\
\mathcal{A}(v) & = v \text{ on $\partial D$}.
\endaligned\right.
\end{equation}
For the existence and properties of $\mathcal{A}(v)$, we follow \cite{KM92}. Specifically we follow the idea in the proof of \cite[Theorem 2.4]{KM92} to find 
the n-superharmonic function $\mathcal{A}(v)$. From Step 2, we know $(-\Delta_n \alpha_s(v))$ as Radon measures converges to the Radon measure
$(-\Delta_n v)$, which implies $(-\Delta\alpha_s(v))^+$ converges to $(-\Delta_n v)^+$. Hence we consider
\begin{equation}\label{Equ:definition of alpha_s(v)^+}
\left\{\aligned
-\Delta_n \mathcal{A}_s(v) & = (-\Delta_n \alpha_s(v))^+ \text{ in $D$}\\
\mathcal{A}_s(v) & = \alpha_s(v) \text{ on $\partial D$}.
\endaligned\right.
\end{equation}
The existences and uniqueness of $\mathcal{A}_s(v)$ is due to \cite[Theorem 1 and Remark 1]{BG89}, for instance. Notice that we may let
$$
D = \{x\in \mathbb{S}^n: v (x) > 1\}.
$$
Hence $\alpha_s(v) = v = 1$ on $\partial D$ when $s$ is appropriately large. Regarding regularity, for example, 
thanks to \cite[Theorem 3.3]{Te14}, $\mathcal{A}_s(v)\in C^{1, \gamma}$. Readers are referred to \cite{BG89, Te14} for more references and
history. As in the proof \cite[Theorem 2.4]{KM92}, from \cite[Theorem 1.17]{KM92}, at least for a sequence $s\to\infty$, 
$\mathcal{A}_s(v)$ converges to the n-superharmonic function $\mathcal{A}(v)$ that satisfies \eqref{Equ:definition of v^+}.

\begin{lemma}\label{Lem:comparison}
For $s$ appropriately large, in $D$, 
$$
\mathcal{A}_s(v) + 1 \ge \alpha_{s}(v)
$$ 
where $\mathcal{A}_s(v)$ is the solution to \eqref{Equ:definition of alpha_s(v)^+}.
\end{lemma}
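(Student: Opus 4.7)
The strategy is a weak comparison principle for the $n$-Laplacian, comparing the super-solution $\mathcal{A}_s(v)+1$ with $\alpha_s(v)$. Two facts drive the argument. First, by construction in \eqref{Equ:definition of alpha_s(v)^+},
\[
-\Delta_n \mathcal{A}_s(v) = (-\Delta_n \alpha_s(v))^+ \ge -\Delta_n \alpha_s(v)
\]
on $D$ as Radon measures (the difference being the nonnegative measure $(-\Delta_n\alpha_s(v))^-$). Second, on $\partial D=\{v=1\}$, for $s$ large one has $\mathcal{A}_s(v)+1=2>1=\alpha_s(v)$, giving a slack of $1$. Together these suggest that $\mathcal{A}_s(v)+1$ lies above $\alpha_s(v)$ throughout $D$.

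First I would consider the open ``bad'' set
\[
E := \{x\in D : \alpha_s(v)(x) > \mathcal{A}_s(v)(x) + 1\}.
\]
The strict boundary slack and the continuity of $\alpha_s(v)$ and $\mathcal{A}_s(v)$ up to $\partial D$ imply $\overline{E}\subset D$ and $\alpha_s(v)-\mathcal{A}_s(v)-1\equiv 0$ on $\partial E\cap D$. The goal is to rule out $E\neq\emptyset$.

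The main step is to test the weak formulations of the two equations against
\[
\phi_{\delta,k} := T_k\bigl((\alpha_s(v)-\mathcal{A}_s(v)-1)^+\bigr)\,\eta_\delta,
\]
where $T_k=\min(\cdot,k)$ is a height truncation and $\eta_\delta$ is a cutoff vanishing in a $\delta$-neighborhood of $\mathcal{S}$. Since $\mathcal{S}$ has $n$-capacity zero by Corollary \ref{Cor:CHY-geometric}, one can arrange $\|\nabla\eta_\delta\|_{L^n(D)}<\delta$. Subtracting the two weak identities, the left-hand side splits into a monotone term, nonnegative on $\{0<(\alpha_s(v)-\mathcal{A}_s(v)-1)^+<k\}$ by the usual vector inequality for $p\mapsto|p|^{n-2}p$, plus an error involving $\nabla\eta_\delta$; meanwhile the right-hand side equals $-\int \phi_{\delta,k}\,d(-\Delta_n\alpha_s(v))^-\le 0$. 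Sending $\delta\to 0$ kills the error by H\"older's inequality combined with the $L^q$-bound on $|\nabla v|$ for $q<n$ from Lemma \ref{Lem:L^{n-1}}, and then $k\to\infty$ forces $\nabla(\alpha_s(v)-\mathcal{A}_s(v))=0$ a.e.\ on $E$. Thus $\alpha_s(v)-\mathcal{A}_s(v)-1$ is locally constant on each component of $E$; by continuity it equals $0$ on $\partial E\cap D$, contradicting the strict inequality defining $E$ unless $E=\emptyset$.

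The main obstacle is the admissibility of $\phi_{\delta,k}$ in both weak formulations: $\alpha_s(v)$ is only in $W^{1,q}_{\mathrm{loc}}(D)$ for $q<n$ (Lemma \ref{Lem:L^{n-1}}), and $\mathcal{A}_s(v)$, being a SOLA with Radon measure data, likewise has only partial $W^{1,n}$ integrability near $\mathcal{S}$. The $n$-capacity zero of $\mathcal{S}$ provides the needed $L^n$-control on $\nabla\eta_\delta$, but the passage $\delta\to 0$ in the $\nabla\eta_\delta$-error term, together with justification of integration by parts for a measure-data solution, is where the delicate bookkeeping lies.
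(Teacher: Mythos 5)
Your starting point is the same as the paper's (the measure inequality $-\Delta_n\mathcal{A}_s(v)=(-\Delta_n\alpha_s(v))^+\ge-\Delta_n\alpha_s(v)$ plus the unit boundary slack on $\partial D=\{v=1\}$, exploited through the monotonicity of $p\mapsto|p|^{n-2}p$ applied to the positive part of the difference), but the technical step you single out as the crux is exactly the step that fails as written. In the $\delta\to0$ limit you must absorb the error term $\int T_k(w)\,(|\nabla\alpha_s(v)|^{n-1}+|\nabla\mathcal{A}_s(v)|^{n-1})\,|\nabla\eta_\delta|$, and H\"older against $\|\nabla\eta_\delta\|_{L^n}\to0$ requires the bracket to lie in $L^{n/(n-1)}$, i.e.\ the gradients to lie in $L^n$; the bounds you invoke from Lemma \ref{Lem:L^{n-1}} give only $L^q$, $q<n$, and then the conjugate exponent needed on $\nabla\eta_\delta$ exceeds $n$ -- but for $p>n$ points have positive $p$-capacity, so no cutoff with small $L^p$ gradient exists when $\mathcal{S}\neq\emptyset$. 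So the excision of $\mathcal{S}$ cannot be closed with the integrability you allow yourself. The premise behind this machinery is also off: at \emph{fixed} $s$ there is no singular behaviour to excise. Since $v\to\infty$ at $\mathcal{S}$, $\alpha_s(v)\equiv 2s$ near $\mathcal{S}$ and hence is smooth on all of $D$ with bounded gradient, and the datum $(-\Delta_n\alpha_s(v))^+=f_s^+$ is a bounded function, not a general Radon measure; consequently $\mathcal{A}_s(v)$ is not a SOLA-type object but a bona fide weak solution, $C^{1,\gamma}_{loc}$ by \cite[Theorem 3.3]{Te14}, with $\nabla\mathcal{A}_s(v)\in L^n$. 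The measure-data subtleties you worry about concern the limit object $\mathcal{A}(v)$, and the lemma is deliberately stated at fixed $s$ precisely to avoid them. With the correct regularity in hand your scheme does close (and then no $\eta_\delta$ is needed at all), so the gap is repairable, but as proposed the $\delta\to0$ step is not justified.

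For comparison, the paper argues by contradiction on a component $U$ of the bad set $\{\alpha_s(v)>\mathcal{A}_s(v)+1\}$, whose closure stays in the interior of $D$ because of the boundary slack. It tests the difference of the two equations with $(\alpha_s(v)-\mathcal{A}_s(v)-1)\phi_\varepsilon$, where $\phi_\varepsilon$ is a collar cutoff near $\partial U$ (no cutoff near $\mathcal{S}$), uses the vector inequality $(|\vec a|^{n-2}\vec a-|\vec b|^{n-2}\vec b)\cdot(\vec a-\vec b)\ge 2^{1-n}|\vec a-\vec b|^n$, and controls the collar term by the $C^1$ bounds: since the difference vanishes on $\partial U$ it is $O(\varepsilon)$ on the collar while $|\nabla\phi_\varepsilon|\le C/\varepsilon$, so the product is bounded and the collar contribution vanishes as $\varepsilon\to0$, forcing $\nabla\alpha_s(v)\equiv\nabla\mathcal{A}_s(v)$ in $U$ and a contradiction. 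If you rewrite your argument using the fixed-$s$ regularity (or adopt the paper's collar cutoff near the free boundary), your truncation-based version becomes a correct, essentially equivalent proof.
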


\proof For simplicity, we substitute $\alpha_s(v)$ by $a_s$ and $\mathcal{A}_s(v)+1$ by $b_s$.
Assume otherwise that there is a point $x_{0}\in D$ such that $b_s (x_0) < a_{s}(x_{0})$. 
Then we can find a domain $U \subset D$ such that $b_{s}  < a_{s}$ in $U$
and $b_{s} = a_s$ on $\partial U$. Obviously $\partial U$ is in the interior of $D$ 
because $\mathcal{A}_s(v) = \alpha_s(v)$ on $\partial D$. We now choose a nonnegative cutoff function 
$\phi_{\varepsilon}\in C_{0}^{\infty}(U)$ such that 
\[
\phi_{\varepsilon}(x)=\begin{cases}
1, & x\in U_{2\varepsilon}=\{x\in U; d(x, \partial U) \ge 2\varepsilon\},\\
0, & x\in U \backslash U_\varepsilon = \{x\in U: d(x, \partial U) < \varepsilon\} \\
\in [0, 1] & \text{ otherwise}.
\end{cases}
\]
We then calculate, from \eqref{Equ:definition of alpha_s(v)^+} and \eqref{Equ:f_s},
$$
\aligned
0 & \le \int_{U} f_s^- (a_s - b_s) dx \\
& =\int_{U}(|\nabla b_{s}|^{n-2}\nabla b_{s} - |\nabla a_{s}|^{n-2}\nabla a_{s})\cdot 
(\nabla a_{s} - \nabla b_{s})\phi_\varepsilon  \\
&\quad\quad\quad + \int_{U}(|\nabla b_{s}|^{n-2}\nabla b_{s} - |\nabla a_{s}|^{n-2}\nabla a_{s})\cdot 
(a_s - b_s)\nabla\phi_\varepsilon.
\endaligned
$$
That is
\begin{equation}\label{Equ:small}
 0  \le-2^{1-n}\int_{U} |\nabla b_{s} - \nabla a_{s}|^{n}\phi_\varepsilon 
 + C\int_{U}
 (|\nabla b_{s}|^{n-1}+|\nabla a_{s}|^{n-1})(a_{s} - b_{s}) |\nabla\phi_{\varepsilon}|, 
\end{equation}
where we use the following inequality
$$
(|\vec{a}|^{n-2}\vec{a} - |\vec{b}|^{n-2}\vec{b})\cdot(\vec{a}-\vec{b}) \geq 2^{1-n} |\vec{a}-\vec{b}|^n
$$
for any vectors $\vec{a}, \vec{b}\in\mathbb{R}^n$. Since $a_{s}=b_{s}$ on $\partial U$, 
we may assume $|a_{s} - b_{s}|\le C\varepsilon$ in $supp(\nabla\phi_{\varepsilon})$ by their regularities (as we mentioned $\bar U$ is in the interior 
of $D$). We also naturally have $|\nabla\phi_{\varepsilon}|\le\frac{C}{\varepsilon}$ for the cutoff function $\phi_{\varepsilon}$. 
Hence 
$$
(a_s - b_s)|\nabla\phi_\varepsilon| \leq C.
$$
Let $\varepsilon\to0$ and derive a contradiction in \eqref{Equ:small} (we may need to adjust $b_s = \mathcal{A}_s(v) +\lambda$ for $\lambda\in (0, 1]$ so
that $\partial U$ is at least $C^1$ if necessary). Unless
\[
\nabla b_{s}  \equiv \nabla a_{s}  \,\,{\rm in}\,\,U
\]
which is again a contradiction to the assumption of the proof $b_{s}(x_{0}) < a_{s}(x_{0})$.
\endproof

Consequently, we know that
\begin{equation}\label{Equ:v^+ upper b}
v \leq \mathcal{A}(v) + 1.
\end{equation}

\vskip 0.1in
\noindent{\bf Step 4.} In this step we want to show that the point mass 
$(-\Delta_n v) (\{p\})$ has a uniform lower bound for any $p$ in the support $\mathcal{S}$
of the Radon measure $-\Delta_n v$, which completes the proof of Theorem \ref{Thm:theorem-B}. 
By Step 2, we know that $(-\Delta_n v) (\{p\}) = (-\Delta_n v)^+ (\{p\})$ for $p\in \mathcal{S}$. Our argument in this step therefore 
relies on the following strengthened two aspects of \cite[Theorem 1.1]{MQ18}. The first is

\begin{theorem}\label{Thm:improvement of theorem 1.1} 
Let $w$ be a nonnegative lower semi-continuous function that is $n$-superharmonic in $B(0, 2)\subset \r^n$ and 
\[
-\Delta_n u=\mu\ge 0
\]
for a Radon measure $\mu\ge 0$ and the origin is in the support $\mathcal{S}$ of the Radon measure $\mu$. 
Then there is a set $E\subset\mathbb{R}^n$, which is $n$-thin at the origin, such that
\[
\lim_{x\notin E \text{ and } |x|\rightarrow 0}\frac{u(x)}{\log\frac{1}{|x|}}=\liminf_{|x|\rightarrow0}\frac{u(x)}{\log\frac{1}{|x|}}=m\ge 0
\]
and
\[
u(x)\geq m\log\frac{1}{|x|} - C \ \text{ for $x\in B(0, 1)\setminus \mathcal{S}$ and some $C$}.
\]
Moreover, if $(B(0, 2)\setminus \mathcal {S}, e^{2u}|dx|^2)$ is complete at the origin, then $m\geq1$. 
\end{theorem}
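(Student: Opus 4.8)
The plan is to isolate what actually changes relative to Theorem~\ref{Thm:MQ18}. The hypotheses on $u$ and $\mu$ are the same, except that we no longer presume the origin to be isolated in $\mathcal S=\operatorname{supp}\mu$. The first two conclusions — the $n$-thin set $E$ with
$$
\lim_{x\notin E,\;|x|\to0}\frac{u(x)}{\log\frac1{|x|}}
=\liminf_{|x|\to0}\frac{u(x)}{\log\frac1{|x|}}=m
$$
and the pointwise bound $u(x)\ge m\log\frac1{|x|}-C$ — follow from the argument of \cite[Theorem 1.1]{MQ18} with only notational changes. That part of the proof rests solely on the Kilpel\"ainen--Mal\'y pointwise Wolff potential estimates
$$
C_1\,W^\mu_1(x,r)\ \le\ u(x)\ \le\ C_2\Bigl(\inf_{B(x,r)}u+W^\mu_1(x,2r)\Bigr),\qquad
W^\mu_1(x,r)=\int_0^r\mu(B(x,t))^{\frac1{n-1}}\,\frac{dt}{t},
$$
on the splitting $\mu=\mu(\{0\})\delta_0+\nu$ with $\nu(\{0\})=0$, and on the Wiener-type characterisation of $n$-thinness applied to $W^\nu_1$; isolation of the origin is never used there. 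The only adjustment is that the lower bound is now stated on $B(0,1)\setminus\mathcal S$, which is exactly where $\mu$ vanishes locally, so $u$ is $n$-harmonic, hence continuous and finite.

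The new content is the last assertion, and I would prove $m\ge1$ by contradiction. Suppose $m<1$ and fix $\varepsilon>0$ with $m+\varepsilon<1$. From the limit above there is $r_0\in(0,1)$ with
$$
u(x)\ \le\ (m+\varepsilon)\log\frac1{|x|}\qquad\text{for every }x\in B(0,r_0)\setminus E .
$$
The aim is to exhibit a rectifiable curve $\gamma\colon[0,1)\to B(0,r_0)\setminus(\mathcal S\cup E)$ with $\gamma(t)\to0$ as $t\to1$ whose Euclidean length inside each dyadic annulus $A_j=\{2^{-j-1}\le|x|\le2^{-j}\}$ is $O(2^{-j})$. Granting this, since $u\le(m+\varepsilon)(j+1)\log 2$ on $A_j\setminus E$ — and since $u$ is $n$-harmonic, hence continuous, on $B(0,r_0)\setminus\mathcal S$, so that the length below is legitimate without any $C^2$ hypothesis — one gets
$$
L_g(\gamma)=\int_\gamma e^{u}\,|dx|\ \le\ \sum_{j\ge J} 2^{(m+\varepsilon)(j+1)}\cdot C\,2^{-j}
= C\sum_{j\ge J}2^{-(1-m-\varepsilon)j}\ <\ \infty ,
$$
so $\gamma$ reaches the origin with finite length in $\bigl(B(0,2)\setminus\mathcal S,\,e^{2u}|dx|^2\bigr)$, contradicting completeness at the origin. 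Hence $m\ge1$. This replaces the radial integration used in \cite[Theorem 1.1]{MQ18}, which is unavailable once $0$ is not isolated in $\mathcal S$, because a ray out of the origin may meet $\mathcal S$.

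Constructing $\gamma$ is the step I expect to be the main obstacle. On each $A_j$ one must avoid $(\mathcal S\cup E)\cap A_j$. Its relative $n$-capacity $\operatorname{cap}_n\bigl((\mathcal S\cup E)\cap A_j,\,\tilde A_j\bigr)$, with $\tilde A_j$ a fixed dilate of $A_j$, tends to $0$ as $j\to\infty$: for $E$ this is forced by the convergence of the Wiener integral expressing $n$-thinness of $E$ at $0$, and for $\mathcal S$ it holds because $\mathcal S$ has zero $n$-capacity (Theorem~\ref{Thm:theorem-A}, which is the situation of interest here). By the scale invariance of $n$-capacity, the $n$-capacity of the round spherical condenser inside $\tilde A_j$ separating $\{|x|=2^{-j}\}$ from $\{|x|=2^{-j-1}\}$ equals a constant $c_n>0$ independent of $j$; hence for $j$ large $(\mathcal S\cup E)\cap A_j$ has relative $n$-capacity $<c_n$ and therefore cannot separate those two spheres. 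Exploiting the explicit ball structure of $E$ from \cite{MQ18} together with this non-separation, one selects in $A_j\setminus(\mathcal S\cup E)$ a polygonal arc of length $O(2^{-j})$ joining the inner and outer spheres, plus short arcs reconciling the endpoints of consecutive arcs on the shared sphere; concatenating these for $j\ge J$ and joining to an interior basepoint produces $\gamma$. The delicate points are the quantitative passage from thinness to the threshold $c_n$ on \emph{all} sufficiently deep annuli — where scale invariance of $n$-capacity is essential — and the uniform $O(2^{-j})$ control of the selected arcs; everything else is bookkeeping.
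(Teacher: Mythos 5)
Your identification of what is inherited from \cite{MQ18} and what is genuinely new is correct: only the final assertion $m\geq 1$ needs a new argument, and the obstruction is precisely that a straight ray out of $0$ may hit $\mathcal S$ once $0$ is no longer isolated in $\mathcal S$. You then propose to replace the ray by a general rectifiable curve threaded through the dyadic annuli $A_j$, using that $(\mathcal S\cup E)\cap A_j$ has small relative $n$-capacity and hence cannot separate the two bounding spheres. This is a genuinely different route from the paper, and it has a genuine gap exactly where you flagged it.

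The gap is the passage from non-separation to a connecting arc of length $O(2^{-j})$. Small relative $n$-capacity of the obstacle rules out a separating configuration, so a connecting curve in $A_j\setminus(\mathcal S\cup E)$ exists; but nothing in that argument bounds its Euclidean length, and without a uniform bound of the form $C\,2^{-j}$ on each annulus the series $\sum_j 2^{(m+\varepsilon)j}\operatorname{length}(\gamma\cap A_j)$ need not converge. You gesture at ``the explicit ball structure of $E$'' as a way to select short polygonal arcs, but this is exactly the quantitative step that requires proof, and the non-separation/capacity input does not by itself supply it.

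The paper avoids the issue entirely by a cheaper device: it projects radially, $P(x)=x/|x|$, and uses that $P$ is Lipschitz so that $\operatorname{cap}_n\bigl(P(\omega_i\cap E),\Omega_{-1}\bigr)\leq C\,\operatorname{cap}_n(\omega_i\cap E,\Omega_i)$ (via \cite[Theorem 5.2.1]{AH99}). Summing and using $n$-thinness of $E$ at $0$ in the sense of \cite[Definition 3.1]{MQ18} shows $\operatorname{cap}_n\bigl(\bigcup_{i\geq i_0}P(\omega_i\cap E),\Omega_{-1}\bigr)$ can be made arbitrarily small, in particular smaller than $\operatorname{cap}_n(\mathbb S^{n-1},\Omega_{-1})>0$; hence there is a direction $\theta\in\mathbb S^{n-1}$ whose entire ray into $B(0,2^{-i_0})$ misses $E$ (and $\mathcal S$, using $\mathcal S\subset E$). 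A ray of course has length $\sim 2^{-i_0}$ with no further argument, so the finite-length estimate for $e^{2u}|dx|^2$ follows immediately, contradicting completeness. In short: what your plan needs, and does not prove, is a short connecting arc in each annulus; the radial projection trick manufactures a single straight ray, for which length control is automatic, and reduces the existence question to a one-shot capacity comparison on the sphere rather than an annulus-by-annulus topological argument.

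One smaller point: you justify avoiding $\mathcal S$ by appealing to Theorem~\ref{Thm:theorem-A} to get $\operatorname{cap}_n(\mathcal S)=0$. That theorem is a statement about conformal metrics, not about an arbitrary $n$-superharmonic $u$ in Theorem~\ref{Thm:improvement of theorem 1.1}, so this appeal is out of scope for the theorem as stated; the paper instead records the observation $\mathcal S\subset E$ so that the ray constructed to avoid $E$ automatically avoids $\mathcal S$. Your argument, if repaired, would also want to route around $\mathcal S$ through $E$ in the same way rather than invoking the geometric theorem.
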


The difference of Theorem \ref{Thm:improvement of theorem 1.1} from \cite[Theorem 1.1]{MQ18} is that, when using 
completeness at the origin to derive $m\geq 1$, the assumption of the origin
being isolated singularity is dropped. Theorem \ref{Thm:improvement of theorem 1.1} indeed applies to $\mathcal{A}(v)$, since the completeness at 
the origin for the metric $e^{2v}|dx|^2$ implies the completeness at the origin for the metric $e^{2\mathcal{A}(v)}|dx|^2$ by \eqref{Equ:v^+ upper b}.

\begin{proof}[Proof of Theorem \ref{Thm:improvement of theorem 1.1}] We will only need to focus on the last statement 
to conclude $m\geq 1$. All the rest goes exactly as in \cite{MQ18}. It is easily seen that, if we can find a ray connecting the origin to some point 
in $B(0, 2)\setminus\mathcal{S}$, then the length of such ray under the metric $e^{2u}|dx|^2$ will be finite if $m < 1$. In the following we present a
proof of the existence of such ray in the way similar to the idea used in the proof of \cite[Theorem 5.2]{AH73} in 2 dimensions.
\\

Let $P$ be the radial project
$$
P = \frac {x}{|x|}: B(0, r)\setminus\{0\} \to \mathbb{S}^{n-1}\subset \mathbb{R}^n.
$$ 
Clearly, $P$ is a Lipschitz map. Recall
$$
\omega_i = \{x\in \mathbb{R}^n: 2^{-i-1} \leq |x|\leq 2^{-i}\} \text{ and } \Omega_i = \{x\in \mathbb{R}^n: 2^{-i-2} \leq |x|\leq 2^{-i +1}\}.
$$
Applying \cite[Theorem 5.2.1]{AH99}, we have
$$
cap_n(P(\omega_i\cap E), \Omega_{-1}) \leq C cap_n(\omega_i\cap E, \Omega_i)
$$
and
$$
\aligned
cap_n(\bigcup_{i=0}^\infty P(\omega_i\cap E), \Omega_{-1}) & \leq \sum_{i=1}^\infty
cap_n(P(\omega_i\cap E), \Omega_{-1}) \\
&  \leq C \sum_{i=1}^\infty cap_n(\omega_i\cap E, \Omega_i)\\
& \leq C \sum_{i=1}^\infty 2^{i-1} cap_n(\omega_i\cap E, \Omega_i),
\endaligned
$$
which is finite since $E$ is n-thin at the origin under \cite[Definition 3.1]{MQ18}. Therefore
$$
cap_n(\bigcup_{i=i_0}^\infty P(\omega_i\cap E), \Omega_{-1}) 
$$
is arbitrarily small as $i_0$ chosen to be large. From $cap_n(\mathbb{S}^{n-1}(1), \Omega_{-1}) > 0$ (cf. Lemma \ref{Lem:n-cap-Hausdorff}), 
it then implies that there is a ray from $\omega_{i_0}\setminus E$ to the origin in $B(0, 2^{-i_0})\setminus E$, which completes the proof of 
Theorem \ref{Thm:improvement of theorem 1.1}.
\end{proof}  

Notice that here $\mathcal{S} \subset E$ in Theorem \ref{Thm:improvement of theorem 1.1}. 
The second strengthened aspect of the \cite[Theorem 1.1]{MQ18}  is the following

\begin{lemma}\label{Lem:m=point mass}
Under the assumptions of Theorem \ref{Thm:theorem-B}, when applying Theorem \ref{Thm:improvement of theorem 1.1},
$$
n \omega_n m^{n-1} \leq  \mu(\{0\}).
$$
\end{lemma}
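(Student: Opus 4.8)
The plan is to apply Theorem~\ref{Thm:improvement of theorem 1.1} to $w=\mathcal{A}(v)$ --- legitimate since, by \eqref{Equ:v^+ upper b}, completeness at the origin for $e^{2v}|dx|^2$ forces it for $e^{2w}|dx|^2$ --- and then to extract $n\omega_n m^{n-1}\le\mu(\{0\})$, with $\mu=-\Delta_n w=(-\Delta_n v)^+\ge 0$ solving \eqref{Equ:definition of v^+}, from a single capacity comparison built on the pointwise estimate $w(x)\ge m\log\frac1{|x|}-C$. I may assume $m>0$ and translate the relevant point of $\mathcal{S}$ to the origin. First I would collect what is already available: $\mu$ is a finite Radon measure (Step~1); $\mathcal{A}(v)$ is, away from $\mathcal{S}$, as regular as the construction of Step~3 provides (in particular bounded on compact subsets of $D\setminus\mathcal{S}$), while $w\equiv+\infty$ on $\mathcal{S}$; and, by Theorem~\ref{Thm:theorem-A}, $\mathcal{S}$ has Hausdorff dimension $0$, so $\{\,|x|:x\in\mathcal{S}\,\}$ is Lebesgue-null. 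The last fact lets me fix radii $b_k\downarrow 0$ and $a_k\in(0,b_k)$ so that each closed shell $\{a_k\le|x|\le b_k\}$ is disjoint from $\mathcal{S}$. Since $\bigcap_k(\mathcal{S}\cap B_{b_k})=\{0\}$, it then suffices to show $n\omega_n m^{n-1}\le\mu(\mathcal{S}\cap B_{b_k})$ for every $k$ and let $k\to\infty$.

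For fixed $k$, put $\tau_k:=1+\max_{\{a_k\le|x|\le b_k\}}w<\infty$, and for $T>\tau_k$ large enough that $\rho_T:=e^{-(T+C)/m}<a_k$ introduce the capacitary function $\eta_T:=(\min(w,T)-\min(w,\tau_k))/(T-\tau_k)$ and its localization $\hat\eta_T:=\eta_T\,\mathbf{1}_{B_{b_k}}$. The decisive point of choosing the shell to miss $\mathcal{S}$ is that $w<\tau_k$ there, so $\eta_T\equiv 0$ on the shell; hence $\hat\eta_T$ is a Lipschitz function with compact support inside $B_{b_k}\subset D$, it equals $1$ on $\overline{B_{\rho_T}}$ by the lower bound on $w$, $0\le\hat\eta_T\le 1$, and $\nabla\hat\eta_T=(T-\tau_k)^{-1}\nabla w\,\mathbf{1}_{\{\tau_k\le w<T\}\cap B_{b_k}}$. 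Testing $-\Delta_n w=\mu$ against $\hat\eta_T$ gives the exact identity
$$
\int\hat\eta_T\,d\mu=\int|\nabla w|^{n-2}\nabla w\cdot\nabla\hat\eta_T\,dx=(T-\tau_k)^{n-1}\int|\nabla\hat\eta_T|^n\,dx ,
$$
with no cutoff cross term, precisely because $\hat\eta_T$ was localized across an $\eta_T$-free shell rather than with an auxiliary cutoff.

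The two sides are then estimated as $T\to\infty$, with $k$ (hence $\tau_k$) fixed. For the right-hand side, $\hat\eta_T\in H^{1,n}_0(B_{b_k})$ with $\hat\eta_T\ge 1$ on $\overline{B_{\rho_T}}$, so $\int|\nabla\hat\eta_T|^n\ge cap_n(\overline{B_{\rho_T}},B_{b_k})=n\omega_n\big(\log\tfrac{b_k}{\rho_T}\big)^{1-n}$; since $\log\frac{b_k}{\rho_T}=\frac Tm+\log b_k+\frac Cm=\frac Tm(1+o(1))$ and $(T-\tau_k)^{n-1}=T^{n-1}(1+o(1))$, this gives $(T-\tau_k)^{n-1}\int|\nabla\hat\eta_T|^n\ge n\omega_n m^{n-1}(1+o(1))$. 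For the left-hand side, $\hat\eta_T\to\mathbf{1}_{\mathcal{S}\cap B_{b_k}}$ pointwise (off $\mathcal{S}$ the value $w$ is finite so $\eta_T\to 0$; on $\mathcal{S}$ one has $w\equiv+\infty$ so $\eta_T\equiv 1$), whence $\int\hat\eta_T\,d\mu\to\mu(\mathcal{S}\cap B_{b_k})$ by dominated convergence ($0\le\hat\eta_T\le\mathbf{1}_{\overline{B_{b_k}}}$ and $\mu$ is finite). Passing to the limit in the identity yields $n\omega_n m^{n-1}\le\mu(\mathcal{S}\cap B_{b_k})$, and then $k\to\infty$ finishes the proof.

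I expect no deep obstacle; the points that need care are bookkeeping. First, legitimacy of $\hat\eta_T$ as a test function while $w$ is only $n$-superharmonic: this is handled by the regularity of $\mathcal{A}(v)$ on $D\setminus\mathcal{S}$ recorded in Step~3 together with the fact that $\eta_T\equiv 1$ on a full neighborhood of $\mathcal{S}$ (since $w\to+\infty$ there), which makes $\hat\eta_T$ genuinely Lipschitz, and by the local $L^1$-integrability of $|\nabla w|^{n-1}$ that is needed to admit Lipschitz test functions. Second, the existence of the $\mathcal{S}$-free shells $\{a_k\le|x|\le b_k\}$ with $b_k\to 0$, which rests on $\dim_{H}\mathcal{S}=0$ from Theorem~\ref{Thm:theorem-A}. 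It is exactly this localization in a ``gap'' of $\mathcal{S}$ that lets the argument avoid any cutoff error and any interaction with the $n$-thin exceptional set $E$ of Theorem~\ref{Thm:improvement of theorem 1.1}, which is what makes the estimate come out with the sharp constant $n\omega_n$.
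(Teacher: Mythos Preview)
Your argument is correct and takes a genuinely different route from the paper's proof. The paper compares $\mathcal{A}(v)$ to the fundamental solution $G_{m-\varepsilon,L}=(m-\varepsilon)\log\frac1{|x|}+L$ and applies the divergence theorem over a shrinking family of domains $U_k$ bounded by the level sets $\{\mathcal{A}(v)=G_{m-\varepsilon,L_k}\}$; on $\partial U_k$ the normal derivatives satisfy $\partial_\nu\mathcal{A}(v)\ge\partial_\nu G_{m-\varepsilon,L_k}$, which gives the flux bound $\mu(U_k)\ge n\omega_n(m-\varepsilon)^{n-1}$. To make this work the paper must locate spheres $\mathbb{S}^{n-1}(r_k)$ disjoint from the $n$-thin exceptional set $E$ (via a capacity argument on radial segments) and invoke $C^{1,\gamma}$ regularity of $\mathcal{A}(v)$ to ensure $\partial U_k$ is regular enough for the divergence theorem. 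Your approach sidesteps both of these ingredients: by using the \emph{uniform} lower bound $w\ge m\log\frac1{|x|}-C$ (valid on all of $B(0,1)\setminus\mathcal{S}$, not only off $E$) together with the explicit $n$-capacity of concentric balls, you never interact with $E$ at all and need no regularity of level sets; the only geometric input is a shell missing $\mathcal{S}$, which is immediate from $\dim_H\mathcal{S}=0$. The truncation test function makes the identity $\int\hat\eta_T\,d\mu=(T-\tau_k)^{n-1}\int|\nabla\hat\eta_T|^n$ exact, so the sharp constant drops out in the limit without an auxiliary $\varepsilon$. The paper's method, on the other hand, is more geometric and makes the comparison with the model $n$-harmonic function $G_{m-\varepsilon,L}$ visually transparent.
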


\proof We continue using the notations in the proof of Theorem \ref{Thm:improvement of theorem 1.1}. For $\epsilon > 0$, let
$$
G_{m-\epsilon, L} = (m-\epsilon)\log \frac 1{|x|} + L.
$$
We want to pick up sequence of domains $\{U_k\}$ for a sequence of numbers $L_k\to\infty$ such that
\begin{itemize}
\item $\mathcal{A}(v) \geq G_{\epsilon, L_k}$ in domain $U_k$;
\item $\mathcal{A}(v) = G_{\epsilon, L_k}$ on $\partial U_k$;
\item $\bigcap_{k=1}^\infty U_k = \{0\}$.
\end{itemize}
Notice that this requires that $\partial U_k\cap E = \emptyset$. Assume such $\{U_k\}$ exists. Then one observes that, on the boundary
$\partial U_k$,  
\begin{equation}\label{Equ:nabla v^+ greater nabla G}
\aligned
\nabla \mathcal{A}(v) - \nabla G_{\epsilon, k} & = (\frac{\partial (\mathcal{A}(v) - G_{\epsilon, k})}{\partial n}, 0, \cdots, 0)\\
\frac{\partial \mathcal{A}(v) }{\partial n} & \geq \frac {\partial G_{\epsilon, k}}{\partial n}.
\endaligned
\end{equation}
Therefore, by the divergence theorem,
\begin{equation}\label{Euq:divergence theorem}
\aligned
(-\Delta_n \mathcal{A}(v)) (U_k) & = \int_{\partial U_k} |\nabla \mathcal{A}(v)|^{n-2}\frac {\partial \mathcal{A}(v)}{\partial n} \\
& \geq \int_{\partial U_k} |\nabla G_{\epsilon, k}|^{n-2}\frac {\partial G_{\epsilon, k}}{\partial n} \\
& = n\omega_n (m-\epsilon)^{n-1}
\endaligned
\end{equation}
which implies this lemma. So the rest is to construct the sequence $\{U_k\}$. 
\\

In the light of \cite[Lemma 5.1]{MQ18} (also see \cite[Lemma 1.4 page 212]{Re94} and \cite[Theorem 4]{Ge61}), 
for a straight line segment $l_i$ connecting the inner sphere to the outer sphere of $\omega_i$, we have
$$
cap_n (l_i, \Omega_i) = cap_n(l_0, \Omega_0) \geq  \frac {c_n}{(\log 3)^{n-1}} > 0.
$$ 
Hence there are plenty shperes $\mathbb{S}^{n-1}(\lambda) \subset \omega_i\setminus E$ at least for $i$ sufficiently large. We may consider
a sequence $r_k\to 0$ such that the spheres $\mathbb{S}^{n-1}(r_k) \bigcap E = \emptyset$. Let
$$
L_k = \min_{\mathbb{S}^{n-1}(r_k)} (\mathcal{A}(v) - (m-\epsilon)\log\frac 1{|x|}) - 1.
$$
Then
$$
\mathcal{A}(v) \geq G_{\epsilon, k} + 1 > G_{\epsilon, k} \text{ on $\mathbb{S}^{n-1}(r_k)$}.
$$
Because
\begin{itemize}
\item $G_{\epsilon, k} (x)  \to -\infty$ as $x\to \infty$;
\item $\mathcal{A}(v) (x) \to + \infty$ as $x$ approaches $\mathcal{S}$ away from the origin;
\item $\mathcal{A}(v) (x) \geq m\log \frac 1{|x|} -C$ as $x\to 0$ away from $\mathcal{S}$. 
\end{itemize}
There exists $U_k$ as desired. Finally, notice that
$\mathcal{A}(v)$ is at least $C^{1, \gamma}$ away from the singular set $\mathcal{S}$ due to \cite[Theorem 3.3]{Te14}. The regularity of $U_k$ therefore
is guaranteed. This completes the proof of this lemma.
\endproof

Finally Lemma \ref{Lem:m=point mass} and Theorem \ref{Thm:improvement of theorem 1.1} together produce a contradiction if the 
singular set $\mathcal{S}$ is infinite. Now we complete the proof of Theorem \ref{Thm:theorem-B}.
\endproof

To end this subsection we state a corollary to Theorem \ref{Thm:theorem-B} in a more geometric fashion. 

\begin{corollary}\label{Cor:cheng-yau}
For $n\geq 3$, let $\Omega$ be a domain in the standard unit round sphere $(\mathbb{S}^n, g_{\mathbb{S}})$.
Suppose that, on $\Omega$, there is a complete conformal metric $g= e^{2u}g_{\mathbb{S}}$ satisfying either $Ric_g$ is nonnegative outside
a compact subset or
\begin{enumerate}
\item $Ric^{-}_g \in L^{1}(\Omega, g) \cap L^\infty(\Omega, g)$
\item $R_g \in L^{\infty}(\Omega, g) \text{ and } |\nabla^g R_g|\in L^{\infty}(\Omega, g)$. 
\end{enumerate}
Then $\partial\Omega = \mathbb{S}^n\setminus\Omega$ is a finite point set. 
\end{corollary}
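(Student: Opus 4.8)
The plan is to deduce Corollary \ref{Cor:cheng-yau} from Theorem \ref{Thm:theorem-B} by verifying its two hypotheses: the blow-up of the conformal factor at $\partial\Omega$ and the integrability condition $Ric^-_g|\nabla u|^{n-2}e^{2u}\in L^1(\Omega, g_{\mathbb S})$. The first hypothesis, $\lim_{x\to\partial\Omega}u(x)=+\infty$, is exactly the conclusion of Lemma \ref{Lem:CYH-improved} (equivalently the improved \cite[Proposition 8.1]{CHY04}), which applies as soon as $R^-_g\in L^p(\Omega, g)$ for some $p>\tfrac n2$. Under assumption (1) we have $Ric^-_g\in L^\infty$, hence $R^-_g\le (n-1)Ric^-_g$ (up to a constant) is bounded, and since $R^-_g\in L^1(\Omega,g)$ as well, interpolation gives $R^-_g\in L^p(\Omega,g)$ for every $p\in[1,\infty]$; under assumption (2) the scalar curvature is directly bounded, so $R^-_g\in L^\infty(\Omega, g)\subset L^p$. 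In the case where $Ric_g$ is nonnegative outside a compact set, $R^-_g$ has compact support and is automatically in every $L^p$ once one knows it is locally bounded, which follows from the standing smoothness of $u$. So in all three cases Lemma \ref{Lem:CYH-improved} applies and gives $u\to+\infty$ at $\partial\Omega$.

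The second hypothesis is where the real work lies, and it is here that the distinction between the three cases matters. When $Ric_g\ge 0$ outside a compact set $K$, the quantity $Ric^-_g$ is supported in $K$; on $K$ the metric $g$ is smooth and the factors $|\nabla u|^{n-2}$ and $e^{2u}$ are bounded, so $Ric^-_g|\nabla u|^{n-2}e^{2u}$ is a bounded function with compact support in $\Omega$, hence in $L^1(\Omega, g_{\mathbb S})$. Under assumptions (1)--(2) the idea is to invoke Cheng--Yau's gradient estimate (cf. \cite[Theorem 2.12, Chapter VI]{SY94} and \cite[Lemma 2.3]{CQY00}): on the complete manifold $(\Omega, g)$ with Ricci curvature bounded below (guaranteed by $Ric^-_g\in L^\infty$ in case (1), or by $R_g\in L^\infty$ together with the gradient bound on $R_g$ controlling the full Ricci tensor via Bochner-type considerations in case (2)), the scalar curvature equation \eqref{Equ:scalar curvature} for $\psi=e^{-\frac{n-2}2 u}$ is an equation to which the Cheng--Yau estimate applies, yielding $|\nabla^g \psi|/\psi \le C$, i.e. $|\nabla^g u|_g\le C$. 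Translating back to the background metric $g_{\mathbb S}$, $|\nabla u|_{\bar g} = e^{u}|\nabla^g u|_g \le C e^{u}$. Then
\[
\int_\Omega Ric^-_g |\nabla u|^{n-2} e^{2u}\, dvol_{g_{\mathbb S}} \le C\int_\Omega Ric^-_g\, e^{nu}\, dvol_{g_{\mathbb S}} = C\int_\Omega Ric^-_g\, dvol_g,
\]
which is finite precisely because $Ric^-_g\in L^1(\Omega, g)$ in case (1), and because case (2) (bounded $R_g$ with bounded gradient) should be arranged to imply $Ric^-_g\in L^1(\Omega,g)$ via a Bochner/maximum-principle argument or by reducing case (2) to case (1). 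With both hypotheses of Theorem \ref{Thm:theorem-B} verified, its conclusion that $\partial\Omega$ is a finite point set follows immediately.

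The main obstacle I expect is the careful deployment of the Cheng--Yau gradient estimate in case (2), where only the scalar curvature and its gradient are assumed bounded rather than the full Ricci tensor. One must argue that these assumptions suffice either to bound $Ric_g$ from below directly (so Cheng--Yau applies in its standard form to \eqref{Equ:scalar curvature}), or that a version of the gradient estimate is available that needs only bounds on the coefficient functions of the relevant elliptic equation rather than a two-sided Ricci bound; the cited \cite[Lemma 2.3]{CQY00} is presumably tailored to exactly this situation and should be quoted rather than reproved. A secondary subtlety is ensuring the Cheng--Yau estimate is applied on a \emph{complete} manifold — which $(\Omega, g)$ is, by hypothesis — and that the resulting constant $C$ is global, so that the integral estimate above is uniform over all of $\Omega$. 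Once these points are pinned down, everything else is bookkeeping: the reduction of the three curvature hypotheses to the two hypotheses of Theorem \ref{Thm:theorem-B} is routine given Lemma \ref{Lem:CYH-improved} and the gradient estimate.
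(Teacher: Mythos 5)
Your argument follows the same strategy as the paper's, but you have misread the logical structure of the hypotheses, and this misreading generates the ``main obstacle'' you worry about in your last paragraph. The corollary does not offer three alternative cases: it says \emph{either} $Ric_g$ is nonnegative outside a compact subset, \emph{or} conditions (1) \emph{and} (2) hold jointly. Once (1) and (2) are read conjunctively, the concern about deploying Cheng--Yau under ``case (2) alone'' evaporates: (1) gives $Ric^-_g\in L^\infty(\Omega,g)$, hence $Ric_g$ is bounded from below on $(\Omega,g)$, and (2) supplies $R_g\in L^\infty$ and $|\nabla^g R_g|\in L^\infty$. Those three bounds are exactly the hypotheses of the Cheng--Yau gradient estimate in the form of \cite[Lemma 2.3]{CQY00}, which then yields $|\nabla^g u|_g\le C$, equivalently $|\nabla u|_{g_{\mathbb S}}\le C e^u$. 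The integrability $Ric^-_g|\nabla u|^{n-2}e^{2u}\le C\,Ric^-_g e^{nu}$, integrated against $dvol_{g_{\mathbb S}}$, is then controlled by $\|Ric^-_g\|_{L^1(\Omega,g)}$, which is finite again by (1). No Bochner argument or reduction of ``(2) to (1)'' is needed.

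Apart from that misreading, the rest is correct and aligned with the paper: the blow-up $u\to+\infty$ at $\partial\Omega$ follows from the bounded-scalar-curvature hypothesis in (2) via \cite[Proposition 8.1]{CHY04} (or its improvement, Lemma \ref{Lem:CYH-improved}); and your handling of the ``$Ric_g\ge 0$ outside a compact set'' alternative --- $Ric^-_g$ has compact support, on which $|\nabla u|$ and $e^u$ are bounded since $u$ is smooth, so the integrand is bounded and compactly supported --- is the right, elementary argument there, and is in fact more explicit than the paper's own sketch for this case. So the proposal is essentially correct modulo the misparsed ``or,'' and once that is fixed the proof coincides with the paper's.

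Two small inaccuracies worth cleaning up: the inequality you use should be $R^-_g\le n\,Ric^-_g$ (summing the $n$ Ricci eigenvalues, not $n-1$), and $R^-_g\in L^\infty(\Omega,g)$ alone does not give $R^-_g\in L^p(\Omega,g)$ for finite $p$ without a volume bound; what saves you, as you in fact note, is interpolation between $L^1$ and $L^\infty$, both of which are available once (1) and (2) hold simultaneously.
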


\begin{proof} We only need to verify the two assumptions of Theorem \ref{Thm:theorem-B}. First, since $g$ is complete and the
scalar curvature is bounded, directly applying \cite[Proposition 8.1]{CHY04}, we know 
\[
\lim_{x\to\partial\Omega}w(x)=+\infty.
\]
Now,  via a stereographic projection, let $e^{2w}g_{\mathbb{S}}=e^{2u}|dx|^2.$
We want to prove
\[
Ric_g^{-}|\nabla u|^{n-2} e^{2u} \in L^{1}(\Omega, |dx|^2)
\]
instead, which is equivalent to one of the second assumptions in Theorem \ref{Thm:theorem-B}. From the scalar curvature equation
\[
0=-\frac{4(n-1)}{(n-2)}\Delta_{g}(e^{-\frac{n-2}{2}u})+R_{g}e^{-\frac{n-2}{2}u},
\]
and Cheng-Yau's gradient estimate (cf. \cite[Theorem 2.12, Chapter VI]{SY94} and \cite[Lemma 2.3]{CQY00}), under the assumptions
\begin{enumerate}
\item $Ric_g $ is bounded from below;
\item both $R_g \in L^{\infty}(\Omega, g) \text{ and } |\nabla^g R_g|\in L^{\infty}(\Omega, g)$, 
\end{enumerate}
we know that 
\[
|\nabla_{g} u|\le C \text{ and equivalently }
|\nabla u|\le e^{u},
\]
which implies 
\[
\int_M Ric_g^{-}|\nabla u|^{n-2}e^{2u}\le\int_M Ric_g^{-}e^{nu}\le\|Ric^{-}\|_{L^{1}(\Omega,g)}.
\]
So Theorem \ref{Thm:theorem-B} applies.
\end{proof}

Before we end this subsection we want to share examples to 
compare Theorem \ref{Thm:theorem-B} and Corollary \ref{Cor:cheng-yau} to \cite[Theorem 2.1]{CH02} and its improvement \cite[Corollary 2.4]{Car20}
in cases when ambient manifolds are round spheres. 
Particularly, no condition on the upper bound of Ricci is assumed in Theorem \ref{Thm:theorem-B} and Corollary \ref{Cor:cheng-yau}.
Let us consider 
a conformal metric $g = e^{2u}|dx|^2 = v^\frac 4{n-2}|dx|^2$ on the entire Euclidean space $\mathbb{R}^n = \mathbb{S}^n\setminus\{p_0\}$, where
$$
v(x) = v(|x|) = (\frac 2{1+ |x|^2})^{\frac {m(n-2)}4} \text{ and } u =   \frac m2 \log \frac 2{1+ |x|^2} \text{ when $|x|$ is large}.
$$
We may deform $g$ to have negative Ricci in the region when $|x|$ is not large. For completeness and noncompactness, $m\in [0, 1]$. 
Hence its scalar curvature is
$$
\aligned
R_g & =  \frac {4(n-1)}{n-2} (-\Delta v)v^{-\frac {n+2}{n-2}} \\
& =  n(n-1) (\frac m2 + \frac {m(2-m)(n-2)}{4n} |x|^2) (\frac 2{1+|x|^2})^{(2-m)}\\
\endaligned
$$
and Ricci curvature tensor is
$$
\aligned
 (Ric_g)_{ij}  & = -(n-2)u_{ij} +(n-2)u_iu_j - \Delta u\delta_{ij} -(n-2)|\nabla u|^2\delta_{ij}\\
&  = \frac m 2 (n-1) (\frac 2{1+|x|^2})^2 \delta_{ij} + \frac {m(2-m) (n-2)|x|^2}4 (\frac 2{1+|x|^2})^2 \delta_{ij} \\
& \quad\quad - \frac {m(2-m)(n-2)x_ix_j }4  (\frac 2{1+|x|^2})^2.
\endaligned
$$
For any $m\in (0, 1)$, we have
$$
\aligned
Vol^g(B_r(0)) & \lesssim \int_0^r |x|^{-mn}|x|^{n-1}d|x| \lesssim r^{n-mn} \\
\text{ and } Vol^g(B_r(x_0)) & \lesssim Vol^{|dx|^2} (B_r(x_0)) \lesssim r^n \text{ for any $x_0\in \mathbb{R}^n$}
\endaligned
$$
$$
\int_{\mathbb{R}^n} |R^+_g|^\frac n2 dvol_g \simeq \int_0^\infty |x|^{-n - mn} |x|^{mn}|x|^{n-1} \simeq \infty
$$
and similarly
$$
\int_{\mathbb{R}^n} |Ric_g|^\frac n2 dvol_g \simeq \int_0^\infty |x|^{-n-mn} |x|^{mn} |x|^{n-1} \simeq\infty.
$$
On the other hand, $Ric_g$ clearly turns nonnegative when $|x|$ is sufficiently large and is not asymptotically vanishing. Therefore 
the assumption $Ric \in L^\frac n2(M, g)$ in \cite[Theorem 2.1]{CH02} and \cite[Corollary 2.4]{Car20} is not satisfied. But Theorem \ref{Thm:theorem-B} 
and Corollary \ref{Cor:cheng-yau} cover the cases in these examples.


\section{Injectivity of conformal immersions}\label{Sec:injectivity}

In \cite[Theorem 3]{CQY00}, in order to identify the compactification, it invoked the injectivity theorem of Schoen-Yau \cite{SY88} (cf. 
\cite{CH06, LUY20, CL20}). The injectivity theorem
of Schoen-Yau is available for \cite[Theorem 3]{CQY00} because the scalar curvature is assumed to be strictly positive. In this section we 
consider manifolds that have a conformal immersion 
$$
\Phi: (M^n, g)\to (\mathbb{S}^n, g_{\mathbb{S}})
$$
instead simply are domains in $\mathbb{S}^n$. Our approach follows closely to what was in \cite{SY88} but we would like to have the injectivity when the 
scalar curvature is allowed to have some negativity. We believe the discussion in this section has importance in its own, besides it strengthens 
Theorem \ref{Thm:theorem-B}.
\\

To describe the end behavior that is related to the injectivity, Schoen-Yau in \cite{SY88} introduced the following very important quantity: 
\begin{equation}\label{Equ:d(m)}
d(M) =  \frac {n-2}2 \inf \{\gamma: \text{ such that } \int_{M^\setminus \mathcal{O}} G_o^\gamma dvol_g < \infty\}
\end{equation}
where $G_o$ is the positive minimal Green function for the conformal Laplacian at a base point $o\in M^n$ and $\mathcal{O}$ is an open neighborhood of 
$o$ in $M^n$. First, Schoen-Yau made the following estimate of $d(M)$ when the scalar curvature is nonnegative.

\begin{proposition}\label{Prop:schoen-yau} (Schoen-Yau \cite[Proposition 2.4]{SY88})
Suppose that $(M^n, g)$ is complete and that there is a conformal immersion $\Phi: (M^n, g) \to (\mathbb{S}^n, g_{\mathbb{S}})$. Assume the scalar
curvature is nonnegative. Then $d(M) \leq \frac n2$.
\end{proposition}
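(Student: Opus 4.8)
The plan is to reduce the estimate to the explicitly known Green function of the conformal Laplacian on the round sphere, exploiting conformal covariance. Write $L_g=-\Delta_g+\frac{n-2}{4(n-1)}R_g$ for the conformal Laplacian and set $\hat g:=\Phi^*g_{\mathbb{S}}$. Since $\Phi$ is an immersion between $n$-manifolds it is a local diffeomorphism, so $\hat g$ is a metric pointwise conformal to $g$, say $\hat g=u^{\frac{4}{n-2}}g$ with $u>0$, and $R_{\hat g}\equiv n(n-1)$; hence $u$ solves the Yamabe-type equation $L_g u=\frac{n(n-2)}{4}u^{\frac{n+2}{n-2}}$. Because $R_g\ge 0$, the operator $L_g$ is nonnegative, so the positive minimal Green function $G_o=G_o^g$ exists (this is presupposed by the definition of $d(M)$), and away from $o$ it satisfies $\Delta_g G_o=\frac{n-2}{4(n-1)}R_gG_o\ge 0$. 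Thus $G_o$ is a positive subharmonic function on $M\setminus\{o\}$ tending to $0$ along the ends, hence bounded on $M\setminus\mathcal{O}$, so $\{\gamma:\int_{M\setminus\mathcal{O}}G_o^\gamma\,dvol_g<\infty\}$ is a half-line and it is enough to prove finiteness for every $\gamma>\frac{n}{n-2}$; then $d(M)\le\frac{n-2}{2}\cdot\frac{n}{n-2}=\frac n2$. Finally I record the conformal-covariance identity $G_o^g=u(o)\,u\,G_o^{\hat g}$, relating $G_o$ to the Green function $G_o^{\hat g}$ of the \emph{locally round} metric $\hat g$.

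Next I would dominate $G_o^{\hat g}$ by the pullback of the sphere's Green function. Let $h:=G_{\Phi(o)}^{g_{\mathbb{S}}}$ be the Green function of $L_{g_{\mathbb{S}}}$ at $\Phi(o)$ on $\mathbb{S}^n$. Then $h\circ\Phi>0$ on $M\setminus\Phi^{-1}(\Phi(o))$ and $L_{\hat g}(h\circ\Phi)=\sum_{p\in\Phi^{-1}(\Phi(o))}\delta_p\ge\delta_o$, so a possible non-injectivity of $\Phi$ only produces extra point masses and only helps. The zeroth-order term of $L_{\hat g}$ is the positive constant $\frac{n(n-2)}{4}$, so $L_{\hat g}$ satisfies the maximum principle on relatively compact domains; exhausting $M=\bigcup_k\Omega_k$ and comparing $h\circ\Phi$ with the Dirichlet Green functions $G_o^{\hat g,\Omega_k}\uparrow G_o^{\hat g}$ yields $G_o^{\hat g}\le h\circ\Phi$ on $M\setminus\{o\}$ — this uses only an exhaustion of $M$, not completeness of $\hat g$ (which need not hold). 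I then invoke two standard facts about $h$: that $h\asymp d_{g_{\mathbb{S}}}(\cdot,\Phi(o))^{2-n}$ near $\Phi(o)$ and is bounded away from it, and that $h^{\frac{4}{n-2}}g_{\mathbb{S}}$ is (a constant multiple of) the flat metric on $\mathbb{R}^n\cong\mathbb{S}^n\setminus\{\Phi(o)\}$.

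Putting the pieces together, consider the scalar-flat Green metric $\bar g:=G_o^{\frac{4}{n-2}}g$ on $M\setminus\{o\}$. The covariance identity gives $\bar g=u(o)^{\frac{4}{n-2}}(G_o^{\hat g})^{\frac{4}{n-2}}\hat g$, and the comparison above then yields $\bar g\le C\,\Phi^*(\text{flat metric})$ on $M\setminus\{o\}$. Since $dvol_{\bar g}=G_o^{\frac{2n}{n-2}}dvol_g$, the goal becomes $\int_{M\setminus\mathcal{O}}G_o^{\gamma-\frac{2n}{n-2}}\,dvol_{\bar g}<\infty$ for every $\gamma>\frac{n}{n-2}$, i.e. $\int_{M\setminus\mathcal{O}}G_o^{-s}\,dvol_{\bar g}<\infty$ for $0<s<\frac{n}{n-2}$. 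In the model case $M=\mathbb{R}^n$, $o=0$, the inversion $x\mapsto y=x/|x|^2$ turns $M\setminus\mathcal{O}=\{|x|>1\}$ into a punctured ball $\{0<|y|<1\}$ with $\bar g$ the flat metric and $G_o=|y|^{n-2}$, so the integral is $\int_{\{0<|y|<1\}}|y|^{-s(n-2)}dy<\infty$ exactly when $s<\frac{n}{n-2}$; thus flat $\mathbb{R}^n$ is the borderline, realizing $d(M)=\frac n2$. I expect establishing this picture in general to be the main obstacle: the domination $\bar g\le C\Phi^*(\text{flat metric})$ bounds $\bar g$ only from above, so a non-injective $\Phi$ with infinite multiplicity near an end is a genuine danger; excluding it and pinning the decay rate of $G_o$ to the Euclidean one (so that $\int G_o^{-s}\,dvol_{\bar g}<\infty$ for all $s<\frac{n}{n-2}$) is precisely where completeness of $(M,g)$ together with $R_g\ge 0$ must enter essentially — morally, because the metric completion of $(M\setminus\{o\},\bar g)$ adds back the infinity of $(M,g)$ as a zero-dimensional set near which the scalar-flat metric $\bar g$ collapses with finite volume, with flat $\mathbb{R}^n$ as the extremal configuration.
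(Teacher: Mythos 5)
Your geometric setup is sound as far as it goes: the conformal covariance $G_o^g=u(o)\,u\,G_o^{\hat g}$, the exhaustion comparison $G_o^{\hat g}\le h\circ\Phi$ using the positive zeroth-order term of $L_{\hat g}$, the passage to the scalar-flat Green metric $\bar g = G_o^{4/(n-2)}g$, and the reduction of the claim to $\int_{M\setminus\mathcal{O}}G_o^{-s}\,dvol_{\bar g}<\infty$ for $0<s<\frac{n}{n-2}$ are all correct, and the flat-$\mathbb{R}^n$ model confirms the threshold. But this is a reduction, not a proof, and you have diagnosed the obstruction yourself: every estimate you derive is an \emph{upper} bound on $G_o$ (hence an upper bound on $\bar g$), while the integral $\int G_o^{-s}\,dvol_{\bar g}$ needs control from the other side --- one must rule out that $G_o$ decays faster than the Euclidean rate on a region carrying substantial $\bar g$-volume, equivalently that $\bar g$-volume piles up where $G_o$ is small. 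The domination $\bar g\le C\,\Phi^*(\text{flat metric})$ cannot do this because $\Phi$ may have unbounded multiplicity near the ends, and nothing in your argument bounds the covering degree. So the proposal as written does not close.

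The paper's proof of the generalization (the version with $R_g^-\in L^{n/2}$, which reduces to this statement when $R_g\ge 0$) follows Schoen--Yau and avoids pointwise decay estimates entirely. One exhausts $M$ by $U_i$, lets $G_i$ be the Dirichlet Green function of $L_g$ on $U_i$, multiplies $-\Delta G_i + \tfrac{n-2}{4(n-1)}R_g G_i = 0$ on $U_i\setminus\mathcal{O}$ by $G_i^\varepsilon$, and integrates by parts to get a bound on $\int\tfrac{4\varepsilon}{(1+\varepsilon)^2}|\nabla G_i^{(1+\varepsilon)/2}|^2 + \tfrac{n-2}{4(n-1)}R_g^+ G_i^{1+\varepsilon}$ with the boundary term controlled by $\int_{\partial\mathcal{O}}\partial_\nu G\,G^\varepsilon$. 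The decisive input from the conformal immersion is \emph{not} the explicit Green function of $\mathbb{S}^n$ but the Yamabe--Sobolev inequality of \cite[Prop.~2.2]{SY88},
\[
\Bigl(\int f^{\frac{2n}{n-2}}\,dvol_g\Bigr)^{\frac{n-2}{n}}\le C\int\Bigl(|\nabla f|^2 + \tfrac{n-2}{4(n-1)}R_g f^2\Bigr)\,dvol_g,
\]
which, applied to $f=(1-\zeta)G_i^{(1+\varepsilon)/2}$, converts the energy bound into $\int_{U_i\setminus\mathcal{O}} G_i^{n(1+\varepsilon)/(n-2)}\le C(\varepsilon)$ uniformly in $i$; letting $i\to\infty$ and then $\varepsilon\downarrow 0$ gives $d(M)\le\frac{n}{2}$. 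This global, multiplicity-independent estimate is precisely what your pointwise comparison cannot supply: a conformal immersion into $\mathbb{S}^n$ controls the Sobolev constant of $(M,g)$ but not the covering degree, and it is the Sobolev constant, not the sphere's Green function, that the argument must exploit.
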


It is remarkable that Schoen-Yau managed to use $d(M)$ to gain the injectivity.

\begin{theorem}\label{Thm:schoen-yau} (Schoen-Yau \cite[Theorem 3.1]{SY88})
Suppose that $(M^n, g)$ is complete and that there is a conformal immersion $\Phi: (M^n, g) \to (\mathbb{S}^n, g_{\mathbb{S}})$.
Assume the scalar curvature of $(M^, g)$ is bounded from below and $d(M) < \frac{(n-2)^2}n$. For the dimension $n = 3,4$, assume in addition,
the scalar curvature is bounded. Then $\Phi$ is an embedding.
\end{theorem}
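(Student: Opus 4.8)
The plan is to follow Schoen--Yau's method: regard the conformal immersion $\Phi$ as the developing map of the (automatically) locally conformally flat structure on $M$, and force injectivity out of the integrability information encoded in $d(M)$. Since $\Phi$ is conformal, write $\Phi^{*}g_{\mathbb S}=w^{\frac{4}{n-2}}g$ with $w\in C^{\infty}(M)$, $w>0$; as $\Phi^{*}g_{\mathbb S}$ is locally isometric to the round sphere, its scalar curvature equals $n(n-1)$, so the conformal change law for the scalar curvature yields
$$
L_{g}w=\frac{n(n-2)}{4}\,w^{\frac{n+2}{n-2}},\qquad L_{g}:=-\Delta_{g}+\frac{n-2}{4(n-1)}R_{g},
$$
and hence $w$ is a positive solution of a Yamabe-type equation, in particular a positive supersolution of the conformal Laplacian $L_{g}$.

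Next I would set up the Green function. Using that $R_{g}$ is bounded below (and, when $n=3,4$, bounded), I would perform a preliminary conformal change making the scalar curvature nonnegative without destroying completeness or the finiteness of $d(M)$, and rename the metric $g$; then $L_{g}$ is a nonnegative operator admitting a minimal positive Green function $G_{o}$ at each $o\in M$, and by the very definition of $d(M)$,
$$
\int_{M\setminus\mathcal O}G_{o}^{\gamma}\,dvol_{g}<\infty\qquad\text{for all }\gamma>\frac{2\,d(M)}{n-2}.
$$
A Harnack/gradient estimate for $L_{g}$ along the ends --- this is exactly where the extra boundedness of $R_{g}$ in dimensions $3$ and $4$ is consumed --- gives a two-sided comparison $c\,G_{o}\le w\le C\,G_{o}$ outside a fixed compact set, and, since $dvol_{g}=w^{-\frac{2n}{n-2}}\Phi^{*}dvol_{g_{\mathbb S}}$, this converts the displayed bound into an integral estimate, counted with multiplicity, for the developed volume form $\Phi^{*}dvol_{g_{\mathbb S}}$ against a power of $w$.

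Injectivity itself I would obtain, as in Schoen--Yau, by a continuation (monodromy) argument. Fix $o\in M$ and $y_{0}=\Phi(o)$, and let $\mathcal U\subset\Phi(M)$ be the maximal domain containing $y_{0}$ that carries a continuous right inverse $\Psi$ of $\Phi$ with $\Psi(y_{0})=o$; injectivity of $\Phi$ amounts to $\mathcal U=\Phi(M)$ with $\Psi$ globally one-to-one, and the existence of such domains is an open condition because $\Phi$ is a local diffeomorphism. The content is a no-escape statement: were $\Psi$ unextendable across some $y^{*}\in\partial\mathcal U\cap\Phi(M)$, then $\Psi(y)$ would leave every compact subset of $M$ as $y\to y^{*}$, so by completeness of $g$ --- together with $w\asymp G_{o}$ and the decay of $G_{o}$ at infinity in $M$ --- the factor $w(\Psi(y))$ would tend to $0$ at a definite rate on a subset of $\mathcal U$ of positive measure; the a priori bound $\int_{M\setminus\mathcal O}w^{\gamma}\,dvol_{g}\asymp\int_{M\setminus\mathcal O}G_{o}^{\gamma}\,dvol_{g}<\infty$ for all $\gamma>\frac{2d(M)}{n-2}$, in combination with $d(M)<\frac{(n-2)^{2}}{n}$, is precisely what such behaviour would violate. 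Hence $\Psi$ extends, $\mathcal U=\Phi(M)$, $\Phi$ is injective, and, being a conformal local diffeomorphism, $\Phi$ is an embedding onto a domain of $\mathbb S^{n}$.

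I expect the last step to be the main obstacle: quantifying how non-extendability of the developing inverse forces $\int_{M\setminus\mathcal O}G_{o}^{\gamma}\,dvol_{g}$ to diverge for some admissible $\gamma$, and thereby identifying $\frac{(n-2)^{2}}{n}$ as the sharp threshold --- the heart of Schoen--Yau's argument, resting on a delicate interplay of Green-function asymptotics, completeness, and the weighted integrability from $d(M)$. The secondary technical point is to secure the Harnack and gradient estimates on the ends when $n=3,4$, which is the reason the scalar curvature is assumed bounded in those dimensions. By contrast, the reduction to $R_{g}\ge 0$ and the change of variables transporting the $L^{\gamma}$ data from $(M,g)$ to $\mathbb S^{n}$ should be routine.
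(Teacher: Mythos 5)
This theorem is quoted from Schoen--Yau \cite{SY88}, and the paper does not reprove it; but it does reproduce the skeleton of Schoen--Yau's argument when proving the modified Theorem~\ref{Thm:ricci-injectivity}, so the intended proof structure is clear. Your proposal does not follow it, and the substitute strategy has a real gap.

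Schoen--Yau's proof is not a monodromy/continuation argument. The key object is the ratio $v = G_o/\bar G_o$, where $G_o$ is the minimal positive Green function of the conformal Laplacian on $(M,g)$ and $\bar G_o$ is the pull-back of the sphere's Green function through $\Phi$. One then writes $\bar g=\bar G_o^{4/(n-2)}g$ (flat) and observes that $v$ is $\bar g$-harmonic, with $0<v\le 1$ and $v(o')=0$ at any second preimage $o'$ of $\Phi(o)$; injectivity is therefore equivalent to $v\equiv 1$. The heart of the proof is a Bochner-type integral estimate for the $\bar g$-harmonic function $v$, using $\bar\Delta|\bar\nabla v|^2=2|\bar\nabla\bar\nabla v|^2$ tested against $|\bar\nabla v|^{\alpha-2}$ with $\alpha=\tfrac{2(n-2)}{n}$; the hypothesis $d(M)<\tfrac{(n-2)^2}{n}$ enters precisely to make the resulting cutoff error terms (which are integrals of powers of $G_o$) finite and then small, forcing $|\bar\nabla v|\equiv 0$. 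That is exactly why the threshold is $\tfrac{(n-2)^2}{n}$ and not something else. Your proposal never introduces $v$, never identifies the flat metric relative to which anything is harmonic, and asserts but does not derive the threshold; you even flag the quantitative step as the ``main obstacle'' and leave it untouched, but this step \emph{is} the theorem.

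There are also two factual problems with the substitute route. First, the claimed two-sided comparison $cG_o\le w\le CG_o$ between the conformal factor of $\Phi$ and the minimal Green function is not part of the argument and is not true in general: $w^{4/(n-2)}g=\Phi^*g_{\mathbb S}$ is related to $\bar G_o$ (the pull-back Green function), not to $G_o$, and the whole point is that $G_o/\bar G_o$ is a priori only between $0$ and $1$. Second, the role of the extra boundedness of $R_g$ when $n=3,4$ in Schoen--Yau's proof is not a ``Harnack/gradient estimate on the ends''; it is used to control the integration-by-parts error terms in the Bochner estimate, which become borderline in low dimensions because the exponent $\alpha=\tfrac{2(n-2)}{n}$ drops below the range where the simple cutoff argument closes. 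A correct proof needs to rebuild the Green-function ratio, the flat-harmonicity, and the weighted Bochner inequality; the continuation argument as sketched does not produce a contradiction from non-injectivity.
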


As a consequence of Proposition 2.4 and Theorem 3.1 in \cite{SY88}, they derived the injectivity when the scalar curvature is nonnegative and the 
dimension is greater than 6. The proof of the rest of the injectivity theorem of Schoen-Yau \cite{SY88} used a different argument based on a variant
of positive mass theorem (cf. \cite{CH06, LUY20, CL20}). 
\\

We will produce a version of Proposition \ref{Prop:schoen-yau} and Theorem \ref{Thm:schoen-yau} 
under different assumptions. We will then prove the main theorem as an improvement of Theorem \ref{Thm:theorem-B}.


\subsection{Estimates of $d(M)$}\label{Subsec:estimate of d(M)} 
In this subsection we want to state and prove a variant of Proposition 
\ref{Prop:schoen-yau} to give an estimate of $d(M)$ under integral lower bound of the scalar curvature. The proof is a modificationof the proof
of Proposition 2.4 in \cite{SY88}.  

\begin{proposition}\label{Prop:d(M)-estimate}
Suppose that $(M^n, g)$ ($n\geq 3$) is a manifold that has a conformal immersion 
$$
\Phi: (M^n, g) \to (\mathbb{S}^n, g_{\mathbb{S}}).
$$ 
Assume that $R_g^-\in L^\frac n2(M, g)$. Then $d(M) \leq \frac n2$.
\end{proposition}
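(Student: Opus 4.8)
The plan is to follow the strategy of \cite[Proposition 2.4]{SY88} but to localize the argument to the region where the scalar curvature is very negative and absorb the error using the smallness of $\|R_g^-\|_{L^{n/2}}$ on that region. The starting point is the conformal immersion $\Phi:(M^n,g)\to(\mathbb{S}^n,g_{\mathbb{S}})$, which lets us write $g=\Phi^*g_{\mathbb{S}}=\varphi^{\frac4{n-2}}g_0$ locally, where $g_0$ is a flat background obtained via stereographic projection. Since $G_o$ is the minimal positive Green function of the conformal Laplacian $L_g=-\Delta_g+\frac{n-2}{4(n-1)}R_g$ at $o$, the metric $\hat g=G_o^{\frac4{n-2}}g$ is a scalar-flat complete metric on $M\setminus\{o\}$ that is again conformal to the round sphere, so on each end it can be developed into $\mathbb{S}^n$; the quantity $d(M)$ measures the Green-function decay, equivalently the volume growth of $\hat g$ near the punctures/ends. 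The goal $d(M)\le \frac n2$ is equivalent to the statement that $\int G_o^{\gamma}\,dvol_g<\infty$ for every $\gamma>\frac n2\cdot\frac2{n-2}=\frac n{n-2}$, which by the conformal change is equivalent to a Euclidean-type volume bound $\mathrm{Vol}_{\hat g}(\hat B_r)\lesssim r^{n}$, i.e. the scalar-flat developed ends look like Euclidean ends up to the borderline exponent.

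The key steps, in order: (1) Fix $o$ and write down the Green function $G_o$ of $L_g$; record that $\hat g=G_o^{4/(n-2)}g$ has zero scalar curvature away from $o$ and inherits a conformal immersion into $\mathbb{S}^n$. (2) In the original conformal coordinates near an end, express everything on flat $\mathbb{R}^n$: $g=u^{4/(n-2)}|dx|^2$ with $-\Delta u=\frac{n-2}{4(n-1)}R_g u^{(n+2)/(n-2)}$, and $G_o u$ playing the role of the new conformal factor, so that $w:=G_o\,u$ satisfies $-\Delta w = \frac{n-2}{4(n-1)}R_g\,u^{4/(n-2)}\,w$ on the end, which we split as $-\Delta w + \frac{n-2}{4(n-1)}R_g^-\,u^{4/(n-2)}\,w = \frac{n-2}{4(n-1)}R_g^+\,u^{4/(n-2)}\,w \ge 0$, so $w$ is a positive supersolution of a Schrödinger-type operator with potential $V=\frac{n-2}{4(n-1)}R_g^-\,u^{4/(n-2)}$ whose $L^{n/2}(|dx|^2)$ norm on the end equals (up to a constant) $\|R_g^-\|_{L^{n/2}(\mathrm{end},g)}$, which is finite. (3) Choose the end small enough (possible because $R_g^-\in L^{n/2}(M,g)$ so the tails are small in $L^{n/2}$) that this potential has $L^{n/2}$-norm below the threshold in the Sobolev inequality; then run the Moser/De Giorgi iteration exactly as in \cite[Proposition 2.4]{SY88} — the negative curvature term is dominated by the Sobolev term and absorbed, so it produces only a harmless multiplicative constant and does not change the exponent. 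This yields the same decay estimate on $w$ (equivalently on $G_o$) as in the nonnegative scalar curvature case, hence the same bound $d(M)\le\frac n2$.

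The main obstacle I expect is Step (3): making the Moser iteration of \cite{SY88} robust against the negative-curvature term without losing the critical exponent. One has to be careful that the iteration constant does not blow up as one iterates (which would worsen the power of $r$) — this is exactly why one needs the $L^{n/2}$-smallness of $R_g^-$ on the end rather than merely finiteness, and why completeness of $g$ (guaranteeing the ends are where $u$, hence $G_o u$, degenerates in a controlled way) is used. A secondary technical point is handling the dimensions $n=3,4$ and the behavior near $o$ itself, but near $o$ the Green function has the standard $|x|^{-(n-2)}$ singularity contributing a finite integral for $\gamma<\frac n{n-2}$, so that region is harmless and the whole issue is the ends. Once the decay estimate on $G_o$ near each end is in hand, integrating $G_o^\gamma$ against $dvol_g$ over the end converges precisely for $\gamma>\frac n{n-2}$, giving $d(M)\le\frac n2$ and completing the proof.
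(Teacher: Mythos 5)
Your high-level instinct --- that the only new ingredient beyond \cite[Proposition 2.4]{SY88} is to shrink the ``outside region'' until $\|R_g^-\|_{L^{n/2}}$ there is small enough to be absorbed by the Sobolev term --- is exactly the idea the paper uses. But the way you set it up has a genuine error, and the method you invoke does not match what is actually needed.

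The equation you write for $w=G_o u$ is wrong. By conformal covariance of the conformal Laplacian, $L_g G_o=u^{-(n+2)/(n-2)}(-\Delta)(uG_o)$, so $L_g G_o=0$ away from $o$ forces $-\Delta w=0$: $w$ is flat-\emph{harmonic} away from the pole, not a supersolution of a Schr\"odinger operator with potential built from $R_g^-$. (Equivalently: $\hat g=G_o^{4/(n-2)}g=w^{4/(n-2)}|dx|^2$ has zero scalar curvature, full stop.) Once $w$ is harmonic, $R_g^-$ has vanished from the equation, and the ``absorb the potential'' step you plan has nothing to absorb --- so the strategy as stated cannot go through. The negative part of the scalar curvature has to enter before you conformally absorb it into the Green function, which is why the paper never passes to the flat $w$-picture. (Passing to flat coordinates is also awkward here since $\Phi$ is only an immersion, so the stereographic chart need not cover $M\setminus\{o\}$.)

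Second, \cite[Proposition 2.4]{SY88} and the paper's argument do not run a Moser/De Giorgi iteration toward a pointwise decay rate for $G_o$; a pointwise rate is more than $d(M)$ requires and is not what is obtained. What is done is a single weighted energy estimate: take the exhaustion Green functions $G_i$ with $G_i|_{\partial U_i}=0$, multiply $-\Delta_g G_i+\frac{n-2}{4(n-1)}R_g G_i=0$ (on $U_i\setminus\mathcal{O}$) by $G_i^{\varepsilon}$, integrate by parts, and bound the resulting $\int R_g^- G_i^{1+\varepsilon}$ by H\"older against $\|R_g^-\|_{L^{n/2}(M\setminus\mathcal{O})}$ and a Sobolev term $\bigl(\int G_i^{n(1+\varepsilon)/(n-2)}\bigr)^{(n-2)/n}$. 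After choosing $\mathcal{O}$ large so the $L^{n/2}$-tail of $R_g^-$ is small (your correct observation), that Sobolev term is absorbed into the left-hand side using the Yamabe--Sobolev inequality from \cite[Proposition 2.2]{SY88}, giving a uniform bound on $\int_{U_i\setminus\mathcal{O}} G_i^{n(1+\varepsilon)/(n-2)}$ and hence, by passing to the limit, $\int G_o^{n(1+\varepsilon)/(n-2)}\,dvol_g<\infty$ for every $\varepsilon>0$. This is precisely $d(M)\le\frac n2$ by the definition \eqref{Equ:d(m)}. So the fix to your proposal is: stay intrinsic, work with the $G_i$ and the equation $L_g G_i=0$, and replace ``Moser iteration for decay'' with the one-shot weighted energy estimate yielding the needed integral bound.
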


\begin{proof}

Given $o\in M$, we let $G_o$ be the minimal Green function with pole
at $o$. Let $\mathcal{O}$ be an open neighborhood of $o$. Let $\{U_{i}\}_{i=1}^{\infty}$
be an exhaustion of $M$ such that $\bar{U}_{i}\subset U_{i+1}$ and $\bar{U}_i$ is compact 
for each $i=1, 2, \cdots$. Consider the positive Green function $G_{i}$ that satisfies
$$
\left\{\aligned
- \Delta G_{i} + \frac {n-2}{4(n-1)}R_g G_{i} & =\delta_{o}.\\
G_{i}|_{\partial U_{i}} & =0
\endaligned\right.
$$
Then $G_{i} \to G_o$ on $M$ in $C^\infty$ over any compact subset since $G_o$ is the minimal Green function.  
On $U_{i}\backslash\mathcal{O}$, we have 
\[
-\Delta G_{i}+ \frac {n-2}{4(n-1)} R^{+}_g G_{i} = \frac {n-2}{4(n-1)} R^{-}_g G_{i},
\]
where $R_g^{+}$ and $R_g^{-}$ are the positive and negative part of the scalar curvature $R_g$ respectively. For simplicity, we will
use $c(n) = \frac {n-2}{4(n-1)}$ in the following. Multiplying by $G_{i}^{\varepsilon}$ for any small $\varepsilon>0$ and integrating on $U_{i}\backslash\mathcal{O}$ on both sides,  
we have 
\[
\int_{U_{i}\backslash\mathcal{O}}-G_{i}^{\varepsilon}\Delta G_{i} + c(n) R_g^{+}G_{i}^{1+\varepsilon}
= c(n) \int_{U_{i}\backslash\mathcal{O}} R_g^{-}G_{i}^{1+\varepsilon}.
\]
Then we have 
\begin{equation}\label{Equ:difference-0}
 \int_{U_{i}\backslash\mathcal{O}}  \frac {4\varepsilon}{(1+\varepsilon)^2}|\nabla G_{i}^{\frac{1+\varepsilon}{2}}|^{2} 
+ c(n)R_g^{+}G_{i}^{1+\varepsilon} \le c(n)\|R_g^{-})\|_{L^{\frac{n}{2}}(M\setminus\mathcal{O})} \, (\int_{U_{i}\backslash\mathcal{O}}
 G_{i}^{\frac{n(1+\varepsilon)}{n-2}})^{\frac{n-2}{n}} + C,
\end{equation}
where we use $\int_{\partial\mathcal{O}}\frac{\partial G_{i}}{\partial\nu}G_{i}^{\varepsilon}\to\int_{\partial\mathcal{O}}\frac{\partial G}
{\partial\nu}G^{\varepsilon}$. Notice that, different from \cite{SY88}, we use the integral lower bound of the scalar curvature instead. 
\\

Suppose $\mathcal{O}\subset U_{1}$, let $\zeta$ be a cut off function
which is $1$ in $\mathcal{O}$ and $0$ outside $U_{1}$. By the Sobolev inequality from \cite[Proposition 2.2]{SY88},
\begin{align*}
(\int_{U_{i}\backslash\mathcal{O}}G_{i}^{\frac{n(1+\varepsilon)}{n-2}})^{\frac{n-2}{n}} 
 & \le C \int_{U_{i}}[(1-\zeta)G_{i}]^{\frac{n(1+\varepsilon)}{n-2}})^{\frac{n-2}{n}} + C\\
 & \le C (\int_{U_{i}}|\nabla((1-\zeta)G_{i})^{\frac{1+\varepsilon}{2}}|^{2}+c(n)R_g^{+}((1-\zeta)G_{i})^{1+\varepsilon})+C\\
 & \le C (\int_{U_{i}}|\nabla (G_{i})^{\frac{1+\varepsilon}{2}}|^{2}+c(n) R_g^{+}G_{i}^{1+\varepsilon})+C
\end{align*}
Therefore we arrive at 
\begin{align*}
 &  \int_{U_{i}\backslash\mathcal{O}}\frac {4\varepsilon}{(1+\varepsilon)^2}|\nabla G_{i}^{\frac{1+\varepsilon}{2}}|^{2}+c(n)R_g^{+}G_{i}^{1+\varepsilon} \\
\le & C \|R_g^{-}\|_{L^{\frac{n}{2}} (M\setminus\mathcal{O})}
[\int_{U_{i}\backslash\mathcal{O}}\frac {4\varepsilon}{(1+\varepsilon)^2}|\nabla G_{i}^{\frac{1+\varepsilon}{2}}|^{2} 
+ c(n)R_g^{+}G_{i}^{1+\varepsilon}] + C 
\end{align*}
Thus, by the assumption $R_g^-\in L^\frac n2(M, g)$,  we choose $\mathcal{O}$ such that $\|R^{-}_g\|_{L^{\frac{n}{2}}(M\backslash\mathcal{O})}$
is sufficiently small and get
\[
\int_{U_{i} \setminus\mathcal{O}}|\nabla G_{i}^{\frac{1+\varepsilon}{2}}|^{2}+c(n) R_g^{+}G_{i}^{1+\varepsilon}\le 
C(\varepsilon,\mathcal{O},\|R^{-}_g\|_{L^{\frac{n}{2}}(M\backslash\mathcal{O})}),
\]
and
\[
\int_{U_{i} \setminus\mathcal{O}}G_{i}^{\frac{n(1+\varepsilon)}{n-2}}\le C(\varepsilon).
\]
This finishes the proof of this proposition.
\end{proof}


\subsection{Injectivity from the integral lower bound on Ricci curvature}\label{Subsec:d(M)-injectivity} 
Let us first state the injectivity theorem in term of integral lower bound on Ricci curvature. Recall from Lemma \ref{Lem:ricci-parabolicity} in Section \ref{Subsec:capacity-parabolicity}, the integral lower bound on Ricci curvature implies p-parabolicity.

\begin{theorem}\label{Thm:ricci-injectivity}
Suppose that $(M^n, g)$ is a complete, noncompact manifold. And suppose that there is a conformal immersion
$$
\Phi: (M^n, g) \to (\mathbb{S}^n, g_{\mathbb{S}}).
$$
Assume that $Ric^-_g \in L^\frac p2(M, g)$ for some $p\in (n, \infty]$. Then $\Phi$ is injective provided that
$$
d(M)< \frac {(n-2)^2p}{(p-2)n}.
$$

\end{theorem}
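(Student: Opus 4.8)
The plan is to adapt the argument of Schoen--Yau \cite[Theorem 3.1]{SY88} but to replace their use of nonnegative scalar curvature and their Harnack-type control of ends with the $p$-parabolicity provided by Lemma \ref{Lem:ricci-parabolicity}. First I would recall the setup from \cite{SY88}: if $\Phi$ fails to be injective, then the developing map of the locally conformally flat structure has nontrivial deck behavior, and one produces an end of $M$ on which the pulled-back round metric $\Phi^*g_{\mathbb{S}}$ is, conformally, close to a punctured ball; equivalently one gets a conformal factor $u$ with $g=e^{2u}\Phi^*g_{\mathbb{S}}$ on that end, and the completeness of $g$ forces $u\to+\infty$ at the puncture. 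The quantity $d(M)$ in \eqref{Equ:d(m)} measures the decay rate of the Green function $G_o$ of the conformal Laplacian, and hence controls the growth rate of $u$ near the bad end through $u \sim \frac{2}{n-2}\log G_o + O(1)$.

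The key steps, in order, would be: (i) assume non-injectivity and isolate the offending end $\mathcal{E}\subset M$ together with the conformal factor there; (ii) use the hypothesis $Ric^-_g\in L^{p/2}(M,g)$, via Lemma \ref{Lem:gallot}, to get the volume growth bound $\lim_{r\to\infty}Vol(B(x_0,r))^{1/p}/r=0$, and via Lemma \ref{Lem:ricci-parabolicity} to conclude $(M^n,g)$ is $p$-parabolic; (iii) combine $p$-parabolicity with the Green-function decay estimate of \cite{SY88} — the same capacity/cutoff computation used there — but now run the iteration with exponent $p$ rather than $n$, which is exactly where the improved constant $\frac{(n-2)^2 p}{(p-2)n}$ (rather than $\frac{(n-2)^2}{n}$) will emerge: the gain comes from the extra integrability $G_o^{\gamma}\in L^1$ being testable against $p$-Sobolev/$p$-capacity estimates; (iv) derive a contradiction: if $d(M)<\frac{(n-2)^2 p}{(p-2)n}$, then the end $\mathcal{E}$ would have to be $p$-nonparabolic (the conformal factor grows too slowly to be swallowed by a $p$-parabolic manifold, so a positive $p$-capacity end survives), contradicting (ii). The scalar-curvature-lower-bound hypothesis of \cite{SY88} is what let them control $G_o$; here $R_g^-$ is controlled because $Ric^-_g\in L^{p/2}$ with $p>n$ forces $R_g^-\in L^{p/2}\subset L^{n/2}$, so Proposition \ref{Prop:d(M)-estimate}-type estimates on $G_o$ still apply after choosing $\mathcal{O}$ large.

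I expect the main obstacle to be step (iii): carrying out the Schoen--Yau Green-function iteration while tracking the exponent $p$ carefully enough to land on the sharp threshold $\frac{(n-2)^2 p}{(p-2)n}$, and in particular justifying that the error terms coming from $R^+_g$ (which is \emph{not} assumed integrable) can still be absorbed. The resolution should be that in the relevant end $R^+_g$ only enters through integrals of the form $\int R^+_g G_i^{1+\varepsilon}$ against cutoffs supported away from $\mathcal{O}$, where one integrates by parts back onto $|\nabla G_i^{(1+\varepsilon)/2}|^2$ exactly as in \eqref{Equ:difference-0}, so $R^+_g$ never needs to be bounded — only the Sobolev inequality \cite[Proposition 2.2]{SY88} and the smallness of $\|R^-_g\|_{L^{n/2}(M\setminus\mathcal{O})}$ are used. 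A secondary subtlety is that $d(M)$ is defined via $G_o^\gamma\in L^1(M\setminus\mathcal{O},dvol_g)$ while the volume form $dvol_g$ itself degenerates at the bad end; one must check that the $p$-parabolicity statement and the $d(M)$ finiteness statement are phrased with respect to compatible measures, which is where the quasi-isometry invariance of $p$-parabolicity noted after Definition \ref{Def:p-parabolicity} is the right tool.
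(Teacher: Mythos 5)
Your instincts about \emph{which} hypotheses do what are mostly right: you correctly identify that $p$-parabolicity (obtained from $Ric_g^-\in L^{p/2}$ via Lemma \ref{Lem:ricci-parabolicity}) must replace the role of nonnegative scalar curvature, and that the threshold $\frac{(n-2)^2 p}{(p-2)n}$ should come from a H\"older inequality with exponents $p/2$ and $p/(p-2)$. But the logical architecture you describe is not the one that works, and there is a genuine gap in your step (iv).

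There is no ``isolate an offending end $\mathcal{E}$ and show it must be $p$-nonparabolic'' argument. The paper, following \cite{SY88} closely, works globally with $v=G_o/\bar G_o$, where $\bar G_o$ is the pull-back by $\Phi$ of the spherical Green function. Since both $\tilde g = G_o^{4/(n-2)}g$ and $\bar g = \bar G_o^{4/(n-2)}g$ are scalar-flat, $v$ is harmonic with respect to $\bar g$, and minimality of $G_o$ gives $0<v\le 1$. The whole content of the theorem is to show $v\equiv 1$. One applies the Bochner formula to $v$ for the flat metric $\bar g$, arrives at Schoen--Yau's (3.4), i.e.
$\int_M\phi^2|\bar\nabla v|^{\alpha-2}|\bar\nabla|\bar\nabla v||^2\,d\bar{vol}\le C\int_M|\nabla\phi|^2\bar G_o^\alpha|\nabla v|^\alpha\,dvol$
with $\alpha=\tfrac{2(n-2)}{n}$, and then splits the right-hand side by H\"older with exponents $\tfrac p2,\ \tfrac p{p-2}$. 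The factor $\bigl(\int|\nabla\phi|^p\,dvol\bigr)^{2/p}$ is made arbitrarily small by $p$-parabolicity; the other factor involves $\int_{supp\nabla\phi}\bar G_o^\beta|\nabla v|^\beta$ with $\beta=\tfrac{2p(n-2)}{n(p-2)}\in(0,2)$, which is finite precisely because $d(M)<\tfrac{n-2}{2}\beta=\tfrac{(n-2)^2p}{(p-2)n}$ gives $\int_{M\setminus\mathcal{O}}G_o^\beta<\infty$ and $\int_{M\setminus\mathcal{O}}G_o^{\beta p/(p-2)}<\infty$, together with the identity for $\Delta\log\bar G_o-\Delta\log G_o$ and a final integration by parts against the conformal Laplace equation for $G_o$, where $\|R_g^-\|_{L^{p/2}}$ is what bounds the bad term. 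So $p$-parabolicity is used \emph{constructively}, to choose the cutoff, not as a thing to contradict. Your picture in which ``the conformal factor grows too slowly to be swallowed by a $p$-parabolic manifold'' is not a mechanism that appears, and it is unclear how you would make it rigorous.

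Two smaller points. First, the inclusion $L^{p/2}\subset L^{n/2}$ that you invoke to control $R_g^-$ is false on a noncompact manifold of infinite volume, so the appeal to Proposition \ref{Prop:d(M)-estimate} is not automatic; fortunately the theorem never needs $R_g^-\in L^{n/2}$ --- the H\"older estimate above only needs $\|R_g^-\|_{L^{p/2}}$. Second, your worry about $R_g^+$ in the Bochner step is misplaced: the Bochner formula is applied to the $\bar g$-harmonic function $v$ for the \emph{flat} metric $\bar g$, so scalar curvature does not appear there at all; $R_g$ only enters at the very end via the Green function equation $-\Delta G_o=-\tfrac{n-2}{4(n-1)}R_g G_o$, tested against $G_o^{\beta-1}\psi^2$.
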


\proof Our proof is a modification of the proof of \cite[Theorem 3.1]{SY88}. Readers are referred to more details in \cite{SY88}.
As in \cite{SY88}, for a point $o\in M$, let $G_o$ be the minimal Green function  and $\bar G_o$ be the pul-back of the Green function 
on the sphere. And let
$$
v = \frac {G_o}{\bar G_o}.
$$
Then, as observed in \cite{SY88}, 
$$
\tilde g = G_o^\frac 4{n-2} g = v^\frac 4{n-2} \bar g \text{ and } \bar g = \bar G_o^\frac 4{n-2} g
$$
where $\bar g$ is the pull-back from the Euclidean metric on $\mathbb{S}^n\setminus\{\Phi(o)\}$. Hence $v$ is a harmonic function with respect to the flat
metric $\bar g$ because both $\tilde g$ and $\bar g$ are scalar-flat. By the definition, it is easily seen that $0<v\leq 1$. The goal is to show that $v\equiv 1$. Starting from the Bochner formula \cite[(3.1)]{SY88} for $v$
$$
\bar \Delta |\bar\nabla v|^2 = 2 |\bar\nabla\bar\nabla v|^2, 
$$
for $\alpha = \frac {2(n-2)}n$, it is shown that  
$$
\int_{M}\phi^{2}|\bar{\nabla}v|^{\alpha-2}|\bar{\nabla}|\bar{\nabla}v||^{2}d\bar{vol} \le C\int_{M}|\bar{\nabla}\phi|^{2}|\bar{\nabla}v|^{\alpha}d\bar{vol}
=  C\int_{M}|\nabla\phi|^{2}\bar{G}_{o}^{\alpha}|\nabla v|^{\alpha}dvol
$$
(cf. \cite[(3.4)]{SY88}), where quantities with an upper bar are those taken with respect to the metric $\bar g$, 
for any smooth function $\phi$ with compact support. Hence
\begin{equation}\label{Equ:difference-1}
\int_{M}\phi^{2}|\bar{\nabla}v|^{\alpha-2}|\bar{\nabla}|\bar{\nabla}v||^{2}d\bar{vol} \le 
C(\int_{M}|\nabla\phi|^{p} dvol)^{\frac{2}{p}}(\int_{supp\nabla\phi}(\bar{G}_{o}^{\alpha}|\nabla v|^{\alpha})^{\frac{p}{p-2}}dvol)^{1-\frac{2}{p}}
\end{equation}
for any smooth function with compact support on $M$. \eqref{Equ:difference-1} is where we deviate from \cite{SY88}. We will handle the first factor
of the right hand side of \eqref{Equ:difference-1} by p-parabolicity, which is available thanks to Lemma \ref{Lem:ricci-parabolicity} in Section 
\ref{Subsec:capacity-parabolicity}. According to
Definition \ref{Def:p-capacity} and Definition \ref{Def:p-parabolicity} in Section \ref{Subsec:capacity-parabolicity}, there is a smooth function 
$\phi$ of compact support that is equal to $1$ inside a big ball $B(o, 2r)$ and equal to $0$ outside an even bigger ball $B(o, R)$ such that 
$\int_M |\nabla \phi|^p dvol$ is as small as desired for appropriate choice of $r$ and $R$. 
\\

To estimate the second factor in the right hand side of \eqref{Equ:difference-1} is to estimate
\begin{equation}\label{Equ:second factor} 
\int_{supp\nabla\phi} \bar G_o^\beta |\nabla v|^\beta \text{ where }
\beta =  \frac {2p(n-2)}{n(p-2)} \in (0, 2)
\end{equation}
since $p\in (n, \infty]$. Here the same idea of the argument in \cite{SY88} applies. For the convenience of readers, we include an outline. Notice that
$$
\bar G_o^\beta |\nabla v|^\beta \leq C (|\nabla G_o|^\beta + G_o^\beta |\nabla \log \bar G_o|^\beta)
$$
due to the definition of $v$. Let $\gamma = \frac 12 \beta(2-\beta)$. Then 
$$
|\nabla G_o|^\beta = G_o^\gamma G_o^{-\gamma} |\nabla G_o|^\beta \leq \frac {2-\beta}2 G_o^\beta + \frac \beta 2
G_o^{\beta} |\nabla \log G_o|^2
$$
and
$$
G_o^\beta |\nabla \log \bar G_o|^\beta \leq G_o^\beta (\frac {2-\beta}2 + \frac \beta 2|\nabla\log\bar G_o|^2)\leq
\frac {2-\beta}2 G_o^\beta + \frac \beta 2 G_o^{\beta} |\nabla \log \bar G_o|^2
$$
by Young's inequality. To treat $G_o^\beta |\nabla\log\bar G_o|^2$, we realize, away from the poles,
$$
\Delta\log\bar G_o - \Delta \log G_o = |\nabla\log G_o|^2 - |\nabla\log\bar G_o|^2,
$$
which helps to derive
$$
\int_M \psi^2 G_o^\beta |\nabla\log\bar G_o|^2 \leq C \int_M \psi^2 G_o^\beta |\nabla\log G_o|^2 + C\int_M \psi^2 G_o^\beta
$$
and \cite[(3.6)]{SY88}
\begin{equation}\label{Equ:3.6}
\int_M \psi^2 \bar G_o^\beta |\nabla v|^\beta \leq C \int_M \psi^2 G_o^{\beta-2} |\nabla G_o|^2 + C\int_M \psi^2 G_o^\beta
\end{equation}
for any smooth function $\psi$ with a compact support away from the point $o$. In fact we will choose $\psi$ that is
identically one on $\bar B(o, R)\setminus B(0, 2r)$ and vanishes outside $\bar B(o, 2R)\setminus B(0, r)$
which includes the support of $\phi$. Finally, to estimate the first term in the right hand side 
of \eqref{Equ:3.6}, we multiply $G^{\beta -1}\psi^2$ to both sides of the equation
$$
- \Delta G_o = - \frac{n-2}{4(n-1)}R_g G_o.
$$
and integrate to get
$$
\aligned
\int_M \psi^2 G_o^{\beta -2}|\nabla G_o|^2 & \leq C \int_M\psi^2R_g^-G_o^\beta  + C\int_M G_o^\beta |\nabla\psi|^2 \\
& \leq C\|R_g^-\|_{L^\frac p2(M, g)} (\int_M (\psi^2 G_o^\beta)^\frac p{p-2})^\frac {p-2}p + C\int_M G_o^\beta |\nabla\psi|^2
\endaligned
$$
Similar to \cite{SY88}, when 
$$
d(M) < \frac {p(n-2)^2}{n(p-2)} = \frac {n-2}2 \beta,
$$
we have
$$
\int_{M\setminus\mathcal{O}} G_o^\beta <\infty \text{ and } \int_{M\setminus\mathcal{O}} G_o^{\beta\frac p{p-2}} <\infty.
$$
Thus we may arrange the sizes of $r$ and $R$ to derive
$$
\int_{B(o, 2r)}|\bar{\nabla}v|^{\alpha-2}|\bar{\nabla}|\bar{\nabla}v||^{2}d\bar{vol} 
$$
is as small as desired and therefore vanishes, which implies $v\equiv 1$ as argued in \cite{SY88}. 
\endproof

Combining with Proposition \ref{Prop:d(M)-estimate} from the previous subsection we have

\begin{corollary}\label{Cor:ricci-injectivity}
Suppose that $(M^n, g)$ is a complete, noncompact manifold. And suppose that there is a conformal immersion
$$
\Phi: (M^n, g) \to (\mathbb{S}^n, g_{\mathbb{S}}).
$$
Assume that $Ric^-_g \in L^\frac p2(M, g)\cap L^\frac n2(M, g)$ for some $p\in (n, \infty]$. Then $\Phi$ is injective provided that
$n\geq 5$.
\end{corollary}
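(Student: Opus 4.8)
The plan is to deduce this corollary by combining Proposition~\ref{Prop:d(M)-estimate} with Theorem~\ref{Thm:ricci-injectivity}. The one point needing attention is that Theorem~\ref{Thm:ricci-injectivity} requires $d(M)$ to lie strictly below the threshold $\frac{(n-2)^2 p}{(p-2)n}$, and this threshold increases as the integrability exponent is lowered toward $n$; so I would first sharpen the available exponent by interpolation and only then invoke the injectivity theorem.

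First I would note that $Ric^-_g\in L^{\frac n2}(M,g)$ forces $R^-_g\in L^{\frac n2}(M,g)$: since the scalar curvature is the trace of the Ricci tensor, $R\ge n\lambda_{\min}\ge -n\,Ric^-_g$ pointwise (recall $Ric^-$ is the negative part of the smallest Ricci curvature), so $R^-_g\le n\,Ric^-_g$ and $\|R^-_g\|_{L^{\frac n2}(M,g)}\le n\|Ric^-_g\|_{L^{\frac n2}(M,g)}<\infty$. Proposition~\ref{Prop:d(M)-estimate} then gives $d(M)\le \frac n2$.

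Next, since $Ric^-_g\in L^{\frac p2}(M,g)\cap L^{\frac n2}(M,g)$, H\"older interpolation yields $Ric^-_g\in L^{\frac q2}(M,g)$ for every $q\in[n,p]$. The quantity $q\mapsto \frac{(n-2)^2 q}{n(q-2)}$ is continuous and strictly decreasing on $(2,\infty)$, with limit $n-2$ as $q\to n^+$. For $n\ge 5$ one has $n-2>\frac n2$, so there is $q_0\in(n,p]$ with $\frac{(n-2)^2 q_0}{n(q_0-2)}>\frac n2\ge d(M)$. Applying Theorem~\ref{Thm:ricci-injectivity} with $q_0$ in place of $p$—whose hypotheses all hold, since $(M,g)$ is complete and noncompact, $Ric^-_g\in L^{\frac{q_0}2}(M,g)$ with $q_0\in(n,\infty)$, and $d(M)<\frac{(n-2)^2 q_0}{(q_0-2)n}$—we conclude that $\Phi$ is injective.

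There is no real analytic difficulty here, as the substantive work is already contained in Proposition~\ref{Prop:d(M)-estimate} and Theorem~\ref{Thm:ricci-injectivity}; the only step to check carefully is the arithmetic, namely that the $p$-dependent threshold, optimized by letting the exponent tend to $n$, exceeds $\frac n2$ exactly when $n\ge 5$. This also explains why the dimension restriction cannot be relaxed by this method: at $n=4$ the best available threshold $n-2=2$ merely equals $\frac n2$, and the inequality in Theorem~\ref{Thm:ricci-injectivity} must be strict.
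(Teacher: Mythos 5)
Your proof is correct and takes essentially the same route the paper does: use interpolation on $Ric^-_g\in L^{p/2}\cap L^{n/2}$ to lower the exponent toward $n$, invoke Proposition \ref{Prop:d(M)-estimate} to get $d(M)\le \tfrac n2$, note $\lim_{q\to n^+}\tfrac{(n-2)^2q}{n(q-2)}=n-2>\tfrac n2$ precisely when $n\ge 5$, and then apply Theorem \ref{Thm:ricci-injectivity}. You also spell out, correctly, the implicit step that $Ric^-_g\in L^{n/2}$ forces $R^-_g\in L^{n/2}$ (via $R\ge n\lambda_{\min}\ge -n\,Ric^-_g$), which Proposition \ref{Prop:d(M)-estimate} actually requires.
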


This is because of the interpolation property of $L^p$ norms and 
$$
\lim_{p\to n} \frac {p(n-2)^2}{n(p-2)} = n-2 > \frac n2 \text{ when $n\geq 5$}.
$$


\subsection{Injectivity from n-parabolicity}\label{Subsec:n-parabolicity-injectivity}
In this subsection we prove an injectivity theorem using n-parabolicity only. Namely,

\begin{theorem}\label{Thm:n-parabolicity-injectivity}
Suppose that $(M^n, g)$ ($n\geq 3$) is a complete noncompact manifold that has a conformal immersion
$$
\Phi: (M^n, g)\to(\mathbb{S}^n, g_{\mathbb{S}}).
$$
Assume that $(M^n, g)$ is n-parabolic. Then $\Phi$ is injective.
\end{theorem}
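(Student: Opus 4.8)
The plan is to carry out the Bochner-formula argument of Schoen--Yau, along exactly the lines of the proof of Theorem~\ref{Thm:ricci-injectivity} above, but with $n$-parabolicity taking over the role played there by the $d(M)$-bound coming from the integral Ricci hypothesis. First I would set up the same objects as in \cite{SY88}: fix a base point $o\in M$, let $G_o$ be the minimal positive Green function of the conformal Laplacian $-\Delta_g+c(n)R_g$ on $M$ (where $c(n)=\tfrac{n-2}{4(n-1)}$), let $\bar G_o$ be the pull-back under $\Phi$ of the analogous Green function on $\mathbb S^n$, and set $v=G_o/\bar G_o$. As recalled in the proof of Theorem~\ref{Thm:ricci-injectivity}, the metrics $\tilde g=G_o^{4/(n-2)}g$ and $\bar g=\bar G_o^{4/(n-2)}g$ are both scalar-flat, $\bar g$ is the pull-back under $\Phi$ of the flat metric on $\mathbb S^n\setminus\{\Phi(o)\}$, and hence $v$ is harmonic with respect to the flat metric $\bar g$ and satisfies $0<v\le 1$. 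As in \cite{SY88}, injectivity of $\Phi$ is equivalent to $v\equiv 1$, so the whole problem reduces to a Liouville-type statement for the bounded $\bar g$-harmonic function $v$.

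I would then reproduce the Bochner computation of \cite[(3.1)--(3.4)]{SY88}: from $\bar\Delta|\bar\nabla v|^2=2|\bar\nabla\bar\nabla v|^2$ together with the refined Kato inequality for the harmonic function $v$, one gets, with $\alpha=\tfrac{2(n-2)}{n}$, the Caccioppoli-type estimate
\begin{equation*}
\int_M \phi^2\,|\bar\nabla v|^{\alpha-2}\,|\bar\nabla|\bar\nabla v||^2\,d\bar{vol}\ \le\ C\int_M |\bar\nabla\phi|^2\,|\bar\nabla v|^{\alpha}\,d\bar{vol}\ =\ C\int_M |\nabla\phi|^2\,\bar G_o^{\alpha}|\nabla v|^{\alpha}\,dvol
\end{equation*}
for every Lipschitz $\phi$ with compact support in $M$. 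Here is the one place where $n$-parabolicity enters: since $(M,g)$ is $n$-parabolic we have $cap_n(\overline{B(o,2\rho)},M)=0$ for every $\rho$ (Definitions~\ref{Def:p-capacity} and~\ref{Def:p-parabolicity}; this is natural because the $n$-capacity is conformally invariant), so we may choose $\phi$ equal to $1$ on any prescribed ball $B(o,2\rho)$, compactly supported, with $\int_M|\nabla\phi|^n\,dvol$ as small as we wish. Applying H\"older with exponents $(\tfrac n2,\tfrac n{n-2})$ to the right-hand side and using $\alpha\cdot\tfrac{n}{n-2}=2$ gives
\begin{equation*}
\int_M \phi^2\,|\bar\nabla v|^{\alpha-2}\,|\bar\nabla|\bar\nabla v||^2\,d\bar{vol}\ \le\ C\Big(\int_M|\nabla\phi|^n\,dvol\Big)^{2/n}\Big(\int_{supp\nabla\phi}\bar G_o^{2}|\nabla v|^{2}\,dvol\Big)^{(n-2)/n},
\end{equation*}
which is precisely the $\beta=2$, i.e.\ formally the $p=n$, specialization of the estimate used in the proof of Theorem~\ref{Thm:ricci-injectivity}.

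The remaining point, which I expect to be the main obstacle, is to bound the second factor $\int_{supp\nabla\phi}\bar G_o^{2}|\nabla v|^{2}\,dvol$ uniformly as $supp\nabla\phi$ is pushed out to infinity; unlike in Theorem~\ref{Thm:schoen-yau} and Theorem~\ref{Thm:ricci-injectivity} there is now no curvature hypothesis to absorb. I would follow the scheme of \cite{SY88}: write $\bar G_o\nabla v=\nabla G_o-G_o\nabla\log\bar G_o$, so that $\bar G_o^{2}|\nabla v|^{2}\le C(|\nabla G_o|^2+G_o^2|\nabla\log\bar G_o|^2)$; control $G_o^2|\nabla\log\bar G_o|^2$ via the identity $\Delta\log\bar G_o-\Delta\log G_o=|\nabla\log G_o|^2-|\nabla\log\bar G_o|^2$; and control the residual $\int\psi^2|\nabla G_o|^2$ by testing $-\Delta_gG_o+c(n)R_gG_o=0$ against $G_o\psi^2$, reducing everything to integrals of the shape $\int_{M\setminus\mathcal O}G_o^{2}$ and $\int_{M\setminus\mathcal O}R_g^-G_o^{2}$. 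The delicate issue, and where I expect the real work to sit, is that the conformal immersion itself supplies the Euclidean-type Sobolev inequality of \cite[Proposition~2.2]{SY88}; feeding this in place of the nonnegativity of $R_g$ into the scheme of \cite[Proposition~2.4]{SY88}, together with the reverse-Poincar\'e inequality for the bounded $\bar g$-harmonic function $v$, should make these integrals finite and hence the second H\"older factor bounded uniformly in $\rho$. Once this is in hand, letting $\int_M|\nabla\phi|^n\,dvol\to0$ forces $\int_M|\bar\nabla v|^{\alpha-2}|\bar\nabla|\bar\nabla v||^2\,d\bar{vol}=0$; as in \cite{SY88} this makes $|\bar\nabla v|$ constant, the constant must be $0$ since $0<v\le1$ on the noncompact $M$, and therefore $v\equiv1$ by its normalization at $o$, so $G_o=\bar G_o$ and $\Phi$ is injective.
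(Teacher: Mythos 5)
Your outline agrees with the paper through the Bochner/Caccioppoli estimate and the H\"older split with exponents $(\tfrac n2,\tfrac n{n-2})$, and you correctly identify $n$-parabolicity as the tool for killing the first H\"older factor $\bigl(\int|\nabla\phi|^n\,dvol\bigr)^{2/n}$. The gap is in the treatment of the second factor $\int_{supp\nabla\phi}\bar G_o^2|\nabla v|^2\,dvol$, and it is not a matter of ``real work'' still to be done along the path you sketch --- that path actually dead-ends. If you decompose $\bar G_o\nabla v=\nabla G_o-G_o\nabla\log\bar G_o$ and test $-\Delta_g G_o+c(n)R_g G_o=0$ against $G_o\psi^2$ (the $\beta=2$ case of the scheme used for Theorem~\ref{Thm:ricci-injectivity}), you inevitably produce the term $\int\psi^2R_g^-G_o^2\,dvol$. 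In Theorem~\ref{Thm:ricci-injectivity} this is absorbed using $R_g^-\in L^{p/2}$; here there is no curvature hypothesis at all, so this term cannot be controlled, and no amount of Sobolev-inequality input saves it. Your closing sentence invokes ``the reverse-Poincar\'e inequality for the bounded $\bar g$-harmonic function $v$'' as if it were an auxiliary ingredient feeding into that scheme; in fact it has to \emph{replace} the decomposition entirely.

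The paper's proof avoids the decomposition: since $\bar g=\bar G_o^{4/(n-2)}g$ one has the pointwise identity $\bar G_o^2|\nabla v|^2\,dvol=|\bar\nabla v|^2\,\bar{dvol}$, so the second H\"older factor is simply the $\bar g$-Dirichlet energy of $v$ over $supp\nabla\phi$. Because $v$ is $\bar g$-harmonic and $0<v\le 1$, a Caccioppoli (reverse-Poincar\'e) estimate with a cutoff $\psi$ equal to $1$ on $supp\nabla\phi$ and on $B(o,2\rho)$ gives
\begin{equation*}
\int\psi^2|\bar\nabla v|^2\,\bar{dvol}\ \le\ 4\int|\bar\nabla\psi|^2v^2\,\bar{dvol}+2\int_{\partial B(o,\rho)}v\,\frac{\partial v}{\partial\bar n}\,\bar{d\sigma},
\end{equation*}
and since $v^2|\bar\nabla\psi|^2\,\bar{dvol}=G_o^2|\nabla\psi|^2\,dvol$, a second application of H\"older together with $n$-parabolicity and $\int G_o^{2n/(n-2)}\,dvol<\infty$ (which is \cite[Corollary 2.3]{SY88}) bounds the right-hand side uniformly once $\rho$ is fixed; the boundary term is finite for fixed $\rho$. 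No curvature enters anywhere. So the missing idea in your proposal is precisely that the conformal-factor identity $\bar G_o^2|\nabla v|^2\,dvol=|\bar\nabla v|^2\,\bar{dvol}$ lets you forget $G_o$ and $\bar G_o$ separately and work only with the bounded $\bar g$-harmonic function $v$, for which a curvature-free Caccioppoli estimate is available.
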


\proof We continue to use the same set up and notations in the previous subsection. The goal is again to show that $v\equiv 1$ under the 
different assumption. The proof goes exactly the same as the proof of Theorem \ref{Thm:theorem-B}  in the previous subsection (cf. cite{SY88})
until \eqref{Equ:difference-1}, where $p=n$ now, that is,
\begin{equation}\label{Equ:difference-2}
\int_{M}\phi^{2}|\bar{\nabla}v|^{\alpha-2}|\bar{\nabla}|\bar{\nabla}v||^{2}d\bar{vol} \le 
C(\int_{M}|\nabla\phi|^{n} dvol)^{\frac{2}{n}}(\int_{supp\nabla\phi}\bar{G}_{o}^2|\nabla v|^2 dvol)^{1-\frac{2}{n}}
\end{equation}
The first factor in the right hand side of \eqref{Equ:difference-2} can be handled in the same spirit as before. It is as small as desired by n-parabolicity
assumption for appropriate choices of $\phi$. 
\\

The second factor in the right hand side of  \eqref{Equ:difference-2} becomes
$$
\int_{supp\nabla\phi} \bar G_o^2 |\nabla v|^2 dvol = \int_{supp\nabla\phi} |\bar \nabla v|^2 \bar {dvol}.
$$
In the following we want to show it is bounded for appropriate choices of $\phi$. Again using the fact $v$ is harmonic with respect to the flat metric 
$\bar g$, for a smooth function $\psi$ that has a compact support away from the other poles of $\bar G_o$ and is identically equal to one in a 
neighborhood $B(o, 2\rho)$ of $o$ for some $\rho > 0$ (chosen later), we have
$$
\aligned
0 & = \int_{M\setminus B(o, \rho)}   \psi^2 v (-\bar\Delta v) \bar {dvol}
 = \int_{M\setminus B(o, \rho)} \psi^2 |\bar\nabla v|^2 \bar{dvol}  \\ 
&  - 2\int_{M\setminus B(o, \rho)} \psi v \nabla v\cdot\nabla\psi \bar{dvol}  - \int_{\partial B(o, \rho)} v\frac{\partial v}{\partial \bar n} \bar{d\sigma}
\endaligned
$$ 
which implies
$$
\int_{M\setminus B(o, \rho)} \psi^2 |\bar\nabla v|^2 \bar{dvol}  \leq 4 \int_M |\bar\nabla \psi|^2  v^2 \bar{dvol} 
+ 2\int_{\partial B(o, \rho)} v\frac{\partial v}{\partial \bar n} \bar{d\sigma}.
$$
The second term is a boundary integral on $\partial B(o, \rho)$, which is bounded when $\rho$ is fixed.
About $\psi$, applying the same argument as in \cite{SY88}, we may remove the restrictions of vanishing in a neighborhood of the other poles of
$\bar G_o$. We therefore have 
\begin{equation}\label{Equ:extension of psi}
\aligned
\int_M |\bar\nabla \psi|^2 v^2 \bar{dvol} =&  \int_M G_o^2 |\nabla \psi|^2 dvol\\
& \leq  (\int_{supp\nabla\psi} |\nabla\psi|^n dvol)^\frac 2n (\int_{supp\nabla\psi} G_o^\frac{2n}{n-2} dvol)^\frac {n-2}n
\endaligned
\end{equation}
for any smooth functions that has compact support and is identically one in $B(o, 2\rho)$. Now the first factor in the right hand side of
\eqref{Equ:extension of psi} can be handled by
the n-parabolicity assumption and the second factor of the right hand side of \eqref{Equ:extension of psi}  
is bound due to \cite[Corollary 2.3]{SY88}. Thus the second factor in the right hand
side of \eqref{Equ:difference-2} is bounded for appropriate choices of $\psi$ after $\phi$ is chosen such as
\begin{itemize}
\item  $\psi$ is identically one on the support of $\nabla\phi$
\item  $\psi$ is identically one in $B(o, 2\rho)$
\item  $\rho < r$ fixed appropriately.
\end{itemize} 
One may fix $\rho>0$ first, then fix $r > 4\rho$,  and finally set $R$ as large as desired. 
\endproof

n-parabolicity is a very interesting notion in conformal geometry though its significance needs to be explored more. For one thing, n-parabolicity is
conformally invariant. Therefore it seems to capture that the Euclidean end is the same as the punctured point most efficiently. The injectivity theorem in
this subsection is worth to mention even though it detached from rest of the discussions in this paper. It is very desirable to find some geometric
conditions that can induce n-parabolicity.  

\subsection{Improvements of Theorem \ref{Thm:theorem-B}}\label{Subsec:proof of main theorem}

In this subsection, as consequences of results in Section \ref{Subsec:estimate of d(M)} and \ref{Subsec:d(M)-injectivity}, 
we want to state the improvement of Theorem \ref{Thm:theorem-B}, particularly, from 
Corollary \ref{Cor:cheng-yau} and Corollary \ref{Cor:ricci-injectivity}, which is the main theorem of this paper.

\begin{theorem}\label{Thm:theorem-C} Suppose that $(M^n, g)$ ($n\geq 5$) is a complete noncompact manifold. And suppose 
that there is a conformal immersion
$$
\Phi: (M^n, g) \to (\mathbb{S}^n, g_{\mathbb{S}}).
$$ 
Assume that either $Ric_g$ is nonnegative outside a compact subset or
\begin{enumerate}
\item $Ric^{-}_g \in L^{1}(\Omega, g) \cap L^\infty(\Omega, g)$
\item $R_g \in L^{\infty}(\Omega, g) \text{ and } |\nabla^g R_g|\in L^{\infty}(\Omega, g)$. 
\end{enumerate}
Then $\Phi$ is an embedding and $\partial \Phi(M) = \mathbb{S}^n\setminus\Phi(M)$ is a finite point set. 
\end{theorem}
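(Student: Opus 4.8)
The plan is to deduce the Main Theorem by combining the injectivity result of Corollary \ref{Cor:ricci-injectivity} with the finite point conformal compactification of Corollary \ref{Cor:cheng-yau}, after transporting the hypotheses from $(M,g)$ to its image in $\mathbb{S}^n$.

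First I would establish that $\Phi$ is injective. Under either alternative the negative part of the smallest Ricci curvature satisfies $Ric^{-}_g \in L^{\frac n2}(M,g)\cap L^{\infty}(M,g)$: if $Ric_g$ is nonnegative outside a compact set $K$, then $Ric^{-}_g$ is supported in $K$ and, the metric being smooth, is continuous hence bounded there, so it lies in $L^{q}(M,g)$ for every $q\in[1,\infty]$; if instead (1) holds, the same follows from $Ric^{-}_g\in L^{1}\cap L^{\infty}$ by interpolation, using $n\ge 5$ so that $\frac n2\ge 1$. Taking $p=\infty$ in Corollary \ref{Cor:ricci-injectivity} and using $n\ge 5$, we get that $\Phi$ is injective. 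Since a conformal immersion between manifolds of the same dimension is a local diffeomorphism, injectivity promotes $\Phi$ to a diffeomorphism onto the open set $\Omega:=\Phi(M)\subset\mathbb{S}^n$; in particular $\Phi$ is an embedding, and since $(M^n,g)$ is noncompact while $\mathbb{S}^n$ is compact, $\Omega$ is a proper domain with $\partial\Phi(M)=\mathbb{S}^n\setminus\Omega$ a nonempty closed set.

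Next I would transport the geometry to $\Omega$. The inverse $\Phi^{-1}\colon\Omega\to M$ pushes $g$ forward to $\tilde g:=(\Phi^{-1})^{*}g$, which equals $e^{2u}g_{\mathbb{S}}$ for some $u\in C^{\infty}(\Omega)$ because $\Phi$ is conformal; and $\Phi$ is an isometry from $(M,g)$ onto $(\Omega,\tilde g)$, so completeness, the sign of $Ric$ outside a compact subset, and the $L^{1}$, $L^{\infty}$ bounds on $Ric^{-}$, $R$ and $|\nabla R|$ all hold verbatim for $(\Omega,\tilde g)$. Thus $(\Omega,\tilde g)$ is a complete conformal metric on a domain $\Omega\subset\mathbb{S}^n$ satisfying the hypotheses of Corollary \ref{Cor:cheng-yau}, which yields that $\partial\Omega=\mathbb{S}^n\setminus\Omega=\partial\Phi(M)$ is a finite point set. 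Together with the embedding property this proves the theorem.

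I expect the injectivity step to be the only delicate point: it rests on Corollary \ref{Cor:ricci-injectivity}, hence ultimately on the $p$-parabolicity produced by Gallot's volume estimate (Lemmas \ref{Lem:gallot} and \ref{Lem:ricci-parabolicity}) together with the refined bound $d(M)\le\frac n2$ of Proposition \ref{Prop:d(M)-estimate}, and one must keep track of how the $L^{p/2}$ and $L^{n/2}$ hypotheses interact in that machinery — the inequality $\lim_{p\to n}\frac{p(n-2)^2}{n(p-2)}=n-2>\frac n2$ for $n\ge 5$ being precisely what makes the argument close. The transfer of hypotheses and the final appeal to Corollary \ref{Cor:cheng-yau} are then routine.
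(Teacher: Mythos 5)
Your proposal is correct and follows essentially the same approach as the paper: the Main Theorem is stated as a direct consequence of Corollary \ref{Cor:cheng-yau} and Corollary \ref{Cor:ricci-injectivity}, and your argument simply supplies the routine details — verifying $Ric^-_g\in L^{n/2}\cap L^\infty$ under either alternative, invoking Corollary \ref{Cor:ricci-injectivity} with $p=\infty$ for injectivity, noting that an injective conformal immersion between equidimensional manifolds is a diffeomorphism onto its open image, and transporting the hypotheses to $\Omega=\Phi(M)$ so Corollary \ref{Cor:cheng-yau} applies.
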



\end{document}